\def\emph{}
\DeclareTextFontCommand{\emph}{\bf}
\DeclareTextFontCommand{\amph}{\it}
\def\le{\leqslant}
\def\ge{\geqslant}
\newenvironment{mimat}{\bigl[\begin{smallmatrix}}{\end{smallmatrix}\bigr]}
\newenvironment{mat}{\begin{bmatrix}}{\end{bmatrix}}
\DeclareMathOperator{\GL}{GL}
\DeclareMathOperator{\trace}{trace}
\DeclareMathOperator{\diag}{diag}
\newcommand{\incl}{\hookrightarrow}
\newcommand{\xto}{\xrightarrow}
\newcommand{\normal}{\triangleleft}
\DeclareMathOperator{\ab}{ab}
\DeclareMathOperator{\Irr}{Irr}
\DeclareMathOperator{\Mat}{M}
\DeclareMathOperator{\End}{End}
\DeclareMathOperator{\Gal}{Gal}
\DeclareMathOperator{\Zent}{Z}
\DeclareMathOperator{\ord}{ord}
\newcommand{\abs}[1]{\lvert#1\rvert}
\newcommand{\card}[1]{\lvert#1\rvert}
\newcommand{\order}[1]{\operatorname{ord}{#1}}
\newcommand{\idx}[1]{\lvert#1\rvert}
\newcommand{\Dih}[1]{\ensuremath{\operatorname{D}_{#1}}}
\newcommand{\SDih}[1]{\ensuremath{\operatorname{SD}_{#1}}}
\newcommand{\Quat}[1]{\ensuremath{\operatorname{Q}_{#1}}}
\newcommand{\Cyc}[1]{\ensuremath{\operatorname{C}_{#1}}}
\DeclareMathOperator*{\lcm}{lcm}
\DeclareMathOperator{\Schur}{m}
\newcommand{\aug}[1]{\ensuremath{\widehat{{#1}}}}
\newcommand{\divides}[2]{\ensuremath{ {#1} \mid {#2} }}
\newcommand{\ddivides}[3]{\ensuremath{ {#1} \mid {#2} \mid {#3}}}
\newcommand{\ndivides}[2]{\ensuremath{ {#1}\! \nmid {#2} }}
\newcommand{\dtimes}{\ensuremath{\,\cdotp}}
\newcommand{\ANC}{{\fontencoding{T1}\textsf{ANC}}}
\newcommand{\anctype}{\ensuremath{\vartheta}}
\newcommand{\ancflav}{\ensuremath{\delta}}
\DeclareMathOperator{\cmet}{\varkappa}
\def\mapspacing{\,\,\,}
\newcommand{\E}{\mathbf{E}}
\newcommand{\F}{\mathbf{F}}
\newcommand{\N}{\mathbf{N}}
\newcommand{\Q}{\mathbf{Q}}
\newcommand{\Z}{\mathbf{Z}}
\newcommand{\R}{\mathbf{R}}
\newcommand{\C}{\mathbf{C}}
\DeclareMathOperator{\fD}{\mathfrak{D}}
\DeclareMathOperator{\fp}{\mathfrak{p}}
\newtheorem{thm}{Theorem}[section]
\newtheorem{lemma}[thm]{Lemma}
\newtheorem*{lemma*}{Lemma}
\newtheorem{prop}[thm]{Proposition}
\newtheorem{cor}[thm]{Corollary}
\newtheorem*{cor*}{Corollary}
\theoremstyle{definition}
\newtheorem*{ex*}{Example}
\newtheorem*{exs*}{Example}
\newtheorem*{defn*}{Definition}
\newtheorem*{defns*}{Definition}
\newtheorem*{notation*}{Notation}
\newtheorem{rem}[thm]{Remark}
\newtheorem*{rem*}{Remark}
\def\blankfootnote{\xdef\@thefnmark{}\@footnotetext}
\newcommand*{\textlabel}[2]{%
  \edef\@currentlabel{#1}
  \phantomsection
  #1\label{#2}
}
\title{Primitive finite nilpotent linear groups over number fields}
\author{Tobias Rossmann}
\affil{\small Fakult\"at f\"ur Mathematik, Universit\"at Bielefeld, D-33501
  Bielefeld, Germany}
\date{June 2015}
\begin{document}

\maketitle
\thispagestyle{empty}

\begin{abstract}
  \small
  Building upon the author's previous work on primitivity testing of finite
  nilpotent linear groups over fields of characteristic zero,
  we describe precisely those finite nilpotent groups which arise as primitive
  linear groups over a given number field.
  Our description is based on arithmetic conditions involving invariants of the
  field.
\end{abstract}

\blankfootnote{\indent{\itshape 2010 Mathematics Subject Classification.}
  20D15, 20H20.
 
{\itshape Keywords.} Nilpotent groups, linear groups, primitivity, cyclotomic fields.

This work was supported by the Research Frontiers Programme of Science
Foundation Ireland, grant 08/RFP/MTH1331 and 
the DFG Priority Programme  ``Algorithmic and Experimental Methods in Algebra,
Geometry and Number Theory'' (SPP 1489).} 

\section{Introduction}

Let $V$ be a finite-dimensional vector space over a field $K$
and let $G \le \GL(V)$ be an irreducible linear group over $K$.
If there exists a decomposition $V = U_1 \oplus\dotsb \oplus U_r$
into a direct sum of proper subspaces permuted by $G$, then $G$ is imprimitive;
otherwise, $G$ is \emph{primitive}.
Irreducibility and primitivity of linear groups
play a similarly fundamental role in the theory of linear groups as transitivity
and primitivity do for permutation groups;
for basic results on primitivity, we refer to \cite[\S 15]{Sup76}.

\paragraph{Primitive nilpotent linear groups over finite fields.}
Using classical structure theory of nilpotent linear groups
(see \cite{Sup63} and \cite[Ch.~VII]{Sup76}), Detinko and Flannery \cite{DF05}
investigated primitive nilpotent linear groups over finite fields.
Their work culminated in a classification \cite{DF04} of these groups in the
sense that they constructed explicit representatives
for the conjugacy classes of primitive nilpotent subgroups of $\GL_d(\F_q)$ in
terms of $d$ and~$q$.
Building on their classification, they devised
an algorithm~\cite[Alg.~7]{DF06}  which simultaneously tests irreducibility and
primitivity of nilpotent linear groups over finite~fields.

\paragraph{Previous work: primitivity testing.}
Inspired by~\cite{DF06}, the author developed methods for
irreducibility~\cite{irrednil} and primitivity~\cite{primnil} testing of finite
nilpotent linear groups over many fields of characteristic zero, including all
number fields.
At the heart of primitivity testing both in \cite{DF06} and in \cite{primnil}
lies a distinguished class of nilpotent groups:
as in \cites{irrednil,primnil}, by an \emph{\ANC{} group}, we mean a finite
nilpotent group whose \emph{a}belian \emph{n}ormal subgroups are all
\emph{c}yclic.
These groups are severely restricted in their structure, see
Theorem~\ref{thm:anc_class} below.
It is an easy consequence of Clifford's theorem that if $G \le \GL(V)$ is
finite, nilpotent, and primitive, then $G$ is an \ANC{} group.
Similarly to the case of finite fields in~\cite{DF06},
the algorithm for primitivity testing in \cite{primnil} first proceeds by
reducing to the case of \ANC{} groups.
The final step of primitivity testing, which differs drastically from the
corresponding situation over finite fields, 
uses our detailed knowledge of the structure of \ANC{} groups in order to
decide primitivity for these groups.

\paragraph{Sylow subgroups of general linear groups.}
As we explained, in order for a finite nilpotent linear group $G$ over a field
$K$ to be primitive it is necessary that $G$ is an \ANC{} group.
Given our ability from \cite{primnil} to test primitivity of such groups $G$ for 
any field $K$ of characteristic zero (subject to minor computability
assumptions), it is natural to ask for a description of those \ANC{} groups $G$
which arise as primitive linear groups over $K$.

Since finite nilpotent groups are direct products of their Sylow $p$-subgroups,
as a first step, we may consider the case of $p$-groups.
For arbitrary fields $K$, the Sylow subgroups of $\GL_d(K)$ have been classified
in terms of arithmetic properties of $K$, see \cite{Vol63,LGP86, Kon87}.
As explained in the introduction of \cite{LGP86}, a classification of the Sylow 
$p$-subgroups of $\GL_d(K)$ naturally reduces to the task of determining the
primitive maximal $p$-subgroups of $\GL_d(K)$.
While these groups might be infinite in general,
they are guaranteed to be be finite if $K$ is a number field thanks to a
classical result due to Schur \cite[8.1.11]{Rob96}. 

A description of the primitive $p$-subgroups of $\GL_d(K)$ can be deduced from
the maximal case in \cite{LGP86}.
However, when passing from $p$-groups to arbitrary finite nilpotent groups, 
various obstacles arise.
In particular, it becomes necessary to investigate the field-theoretic
invariants used in \cite{LGP86,Kon87} not only for $K$ but also for infinitely 
many of its finite extensions.
While the work described in the present article, as summarised below, is
logically independent of the classification of Sylow $p$-subgroups of
$\GL_d(K)$, the latter has nonetheless been an important source of inspiration;
in particular, the invariants featuring in our classification can be traced back
to those in \cite{LGP86}, see Remark~\ref{rem:sylow}.

\paragraph{Results.}
Let $K$ be a number field.
In this article, we describe precisely those \ANC{} groups $G$ which arise as
primitive linear groups over $K$.
Specifically, given $G$, we first show in
Theorem~\ref{thm:irranc} that there exists an irreducible linear group
$G(K)$ over $K$ with $G\cong G(K)$, and we also show that $G(K)$ is unique up to
similarity; in contrast, $G$ might well have several inequivalent faithful
irreducible $K$-representations.

For a given number field $K$, Theorem~\ref{thm:char} and
Corollary~\ref{cor:general_prim_cyclic} together 
characterise precisely those \ANC{} groups $G$ such that $G(K)$ is primitive.
Our characterisation involves arithmetic conditions expressed in terms
of certain field invariants $\cmet_K^{\phantom+}$ and $\cmet_K^\pm$ which we
introduce. 
While these invariants are defined as functions $\N \to \N \cup\{0\}$,
they turn out to be finite objects which can be explicitly computed,
see Remark~\ref{rem:finite_object}.
It follows that for any given number field $K$, we can derive a
finite collection of arithmetic conditions which indicate exactly for which
\ANC{} groups $G$, the linear group $G(K)$ is primitive.
As an illustration, we consider two infinite families of number fields in detail,
namely cyclotomic (Theorem~\ref{thm:cyclo_prim}) and quadratic
(Theorem~\ref{thm:quadratic_prim}) fields.

\vspace*{1em}
\noindent
This article constitutes an improved version of \cite[Ch.~12--14]{thesis}.

\subsection*{\it Notation}
We write $A\subset B$ to indicate that $A$ is a not necessarily proper subset of $B$.
We write $\N = \{ 1, 2, \dotsc\}$ and $2\N -1 = \{1,3,5,\dotsc\}$.
We often write $(a,b) = \gcd(a,b)$ for the non-negative greatest common divisor of $a,b\in \Z$.
For a prime $p$, we let $\nu_p(a) \in \Z\cup\{\infty\}$ be the usual
$p$-adic valuation of $a\in \Q$.
For coprime $a,m\in \Z$, we let $\ord(a\bmod m)$ denote the multiplicative
order of $a + m\Z$ in $(\Z/m\Z)^\times$.

\section{Background}
\label{s:background}

We collect basic facts and set up further notation.

\paragraph{Nilpotent groups.}
Let $G_p$ denote the unique Sylow $p$-subgroup of a finite nilpotent group $G$ and
write $G_{p'} = \prod_{\ell\not= p} G_\ell$  for its Hall $p'$-subgroup.
Let $\Dih{2^j}$, $\SDih{2^j}$, and $\Quat{2^j}$ denote the dihedral,
semidihedral, and generalised quaternion group of order $2^j$, respectively;
see \cite[\S 5.3]{Rob96}.

\begin{thm}[{\cite[Lem.~3]{Roq58}}]
  \label{thm:anc_class}
  A finite nilpotent group $G$
  is an \ANC{} group if and only if $G_{2'}$ is cyclic and $G_2$ is
  isomorphic to $\Quat 8$ or to $\Dih{2^j}$, $\SDih{2^j}$, or $\Quat{2^j}$ for
  $j \ge 4$.
\end{thm}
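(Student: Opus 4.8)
The plan is to reduce to one prime at a time and then analyse $p$-groups that possess a self-centralising cyclic normal subgroup. Write $G=\prod_\ell G_\ell$; the factors have pairwise coprime orders and centralise one another, so every normal subgroup of a factor is normal in $G$, while every abelian normal subgroup $A\normal G$ equals $\prod_\ell(A\cap G_\ell)$ with each $A\cap G_\ell$ abelian and normal in $G_\ell$. As a direct product of cyclic groups of coprime orders is cyclic, and conversely, $G$ is an \ANC{} group precisely when each $G_\ell$ is; so the assertion amounts to the statement that an \ANC{} $p$-group is cyclic for odd $p$, while an \ANC{} $2$-group is cyclic, $\Quat 8$, or one of $\Dih{2^j},\SDih{2^j},\Quat{2^j}$ with $j\ge 4$.

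Let $P$ be an \ANC{} $p$-group with maximal abelian normal subgroup $A$. A routine argument gives $C_P(A)=A$: otherwise $C_P(A)/A$ meets $\Zent(P/A)$ nontrivially, yielding $x\in C_P(A)\setminus A$ with $\langle A,x\rangle$ abelian and normal in $P$, against maximality. Hence $A\cong\Cyc{p^n}$ is cyclic and $P/A=P/C_P(A)$ embeds into the abelian group $\operatorname{Aut}(\Cyc{p^n})$. The crux is that $P=A$ when $p$ is odd and $[P:A]\le 2$ when $p=2$, and both follow from one construction. Suppose $P/A$ contains an element $\bar x$ of order $p$ acting on $A=\langle a\rangle$ as $a\mapsto a^{1+p^{n-1}c}$ with $p\nmid c$. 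For odd $p$ this holds for every nontrivial $P/A$, since $P/A$ is a cyclic $p$-group and this is the action of the unique subgroup of order $p$ of $\operatorname{Aut}(\Cyc{p^n})$; for $p=2$ it holds whenever $[P:A]\ge 4$, because then $P/A$ meets the cyclic subgroup of order $2^{n-2}$ of $\operatorname{Aut}(\Cyc{2^n})$ nontrivially and so contains its involution $a\mapsto a^{1+2^{n-1}}$. From the relation for $x$ one computes that $x^p$ centralises $A$, hence $x^p\in C_P(A)=A$; as $\langle x\rangle\not\le A$, in fact $x^p\in\langle a^p\rangle$, and after multiplying $x$ by a suitable power of $a$ we may assume $x^p=1$. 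Then $x$ centralises $\langle a^p\rangle$, so $B:=\langle a^p,x\rangle\cong\Cyc{p^{n-1}}\times\Cyc p$ is a non-cyclic abelian subgroup, and it is normal in $P$: for every $g\in P$ the automorphism of $A$ induced by $gxg^{-1}$ coincides with that induced by $x$ — here one uses the commutativity of $\operatorname{Aut}(\Cyc{p^n})$ — whence $gxg^{-1}x^{-1}\in C_P(A)=A$, and since $gxg^{-1}$ still has order $p$ one gets $gxg^{-1}\in\langle a^p\rangle x\subseteq B$. This contradicts the \ANC{} hypothesis. The remaining low-dimensional cases, where $\operatorname{Aut}(\Cyc{p^n})$ has order at most $2$ (namely $n\le 1$ for odd $p$ and $n\le 2$ for $p=2$), are settled by inspection; for $p=2$, $n=2$ this leaves only $\Quat 8$ among the non-abelian groups, $\Dih 8$ being excluded by its normal Klein four-groups.

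For odd $p$ the construction forces $P=A\cong\Cyc{p^n}$. For $p=2$: if $P$ is abelian then it equals its own maximal abelian normal subgroup and so is cyclic; otherwise $A\subsetneq P$ with $[P:A]=2$, and the classical classification of $2$-groups with a cyclic subgroup of index $2$ (see, e.g., \cite[\S 5.3]{Rob96}) leaves, after discarding the abelian possibilities and the modular group of order $2^{n+1}$ (which contains a non-cyclic abelian normal subgroup of index $2$), only $\Dih{2^{n+1}},\SDih{2^{n+1}},\Quat{2^{n+1}}$ with $n\ge 3$; together with the case $n=2$ this yields exactly the groups in the statement. Conversely, these are \ANC{}: every abelian subgroup of $\Quat{2^j}$ is cyclic since there is a unique involution, and in $\Dih{2^j}$ or $\SDih{2^j}$ with $j\ge 4$ a non-cyclic abelian subgroup would be a Klein four-group spanned by the central involution and an involution outside a cyclic maximal subgroup $\langle r\rangle$, but conjugation by $r$ fails to preserve such a subgroup once $j\ge 4$. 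The step I expect to be the main obstacle is the normality of the auxiliary subgroup $B$: it is what pins down the bound on $[P:A]$ and thereby enables the appeal to the classification of $2$-groups with a cyclic maximal subgroup, and it is exactly here that one needs the automorphism group of a cyclic group to be abelian.
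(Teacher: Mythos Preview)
The paper does not supply its own proof; the result is quoted from Roquette as background. Your argument is correct and follows the classical line: reduce to $p$-groups, use that a maximal abelian normal subgroup $A$ of a $p$-group $P$ is self-centralising so that $P/A$ embeds in $\operatorname{Aut}(A)$, and then construct a non-cyclic abelian normal subgroup $B=\langle a^p,x\rangle$ whenever $P/A$ contains the automorphism $a\mapsto a^{1+p^{n-1}c}$. The step you single out---normality of $B$---is indeed the crux, and your justification is sound: commutativity of $\operatorname{Aut}(\Cyc{p^n})$ forces $gxg^{-1}\in xA$, and then the order-$p$ constraint on $gxg^{-1}$ forces it into $x\langle a^p\rangle\subset B$ (this last implication needs the small computation $1+s+\dotsb+s^{p-1}\equiv p\cdot u\pmod{p^n}$ with $u$ a unit, for $s=1+p^{n-1}c$ and $n$ in the relevant range, which you use implicitly). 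The appeal to the classification of $2$-groups with a cyclic maximal subgroup and the disposal of the modular group are likewise correct.

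One remark on the statement rather than the proof: your reformulation that an \ANC{} $2$-group is cyclic, $\Quat 8$, or one of $\Dih{2^j},\SDih{2^j},\Quat{2^j}$ with $j\ge 4$ is in fact the complete assertion; the theorem as printed in the paper omits the cyclic case, although cyclic \ANC{} groups are of course treated separately in \S\ref{s:anc_cyclic}.
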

Note the absence of $\Dih 8$ which contains a non-cyclic abelian maximal subgroup.

\paragraph{Linear groups.}
Apart from some of our terminology, the following is folklore; see \cite[Ch.~IV]{Sup76}.
By the \emph{degree} of a linear group $G \le \GL(V)$ over $K$, we mean the
$K$-dimension $\idx{V:K}$ of $V$.
Given $G \le \GL(V)$, we let $K[G]$ denote the subalgebra of $\End(V)$ spanned by $G$.
We say that $G$ is \emph{homogeneous} if $K[G]$ is simple.
Since the centre of a simple algebra is a field, if $G$ is homogeneous, then so
is its centre $\Zent(G)$. 
If $G$ is irreducible, then it is homogeneous.
An abelian group $A\le \GL(V)$ is homogeneous if and only if $K[A]$ is a field.
Two linear groups $G\le \GL(V)$ and $H \le \GL(W)$, both over $K$, are
\emph{similar} if there exists a $K$-isomorphism $\theta\colon V \to W$ 
with $\theta^{-1} G\theta = H$.
Similar $K$-linear groups of a given degree, $d$ say, correspond exactly to
conjugacy classes of subgroups of $\GL_d(K)$.

\paragraph{Schur indices.}
For details on the following, see \cite[\S 70]{CR62}, \cite[\S 38]{Hup98}, and
\cite[\S 10]{Isa76}. 
Let $K$ be a field of characteristic zero 
and let $\bar K$ be an algebraic closure of $K$.
Let $G$ be a finite group
and let $\Irr_K(G)$ denote the set of irreducible $K$-characters of $G$.
For $\chi \in \Irr_{\bar K}(G)$,
there exists a finite extension $L/K(\chi)$ such that $\chi$ is afforded by an $LG$-module.
The \emph{Schur index} $\Schur_K(\chi)$ of $\chi$ over $K$ is the
smallest possible degree $\idx{L:K(\chi)}$.

Let $\psi\in \Irr_K(G)$. By \cite[Thm 70.15]{CR62}, there exists $\chi\in
\Irr_{\bar K}(G)$ such that
$\psi = \Schur_K(\chi) \left( \sum_{\sigma \in \Gamma} \chi^\sigma\right)$, 
where $\Gamma = \Gal(K(\chi)/K)$ 
and the conjugates $\chi^\sigma \in \Irr_{\bar K}(G)$ are distinct.
If the $KG$-module $V$ affords $\psi$,
then the above decomposition of $\psi$ can be found by splitting the $E
G$-module $V \otimes_K E$, where $E \supset K$ is a splitting field for $G$
which is Galois over $K$.
Conversely, let $\chi\in \Irr_{\bar K}(G)$.
Choose $L\supset K(\chi)$ with $\idx{L:K(\chi)} = \Schur_K(\chi)$
such that $\chi$ is afforded by an $LG$-module $W$.
By \cite[Ex.\ 1.6(e)]{Hup98}, the character of $W$ as a 
$KG$-module is $\Schur_K(\chi) \left(\sum_{\sigma\in\Gamma}
  \chi^\sigma\right)$, where again $\Gamma = \Gal(K(\chi)/K)$.
The characters $\chi^\sigma$ are distinct by \cite[Lem.\ 9.17(c)]{Isa76}.
It follows from \cite[Cor.\ 10.2(b)]{Isa76} that $\Schur_K(\chi) \left(
  \sum_{\sigma\in\Gamma} \chi^\sigma \right)$ is the character of an irreducible
$KG$-module and we conclude from \cite[8.3.7]{Rob96} that $W$ is 
irreducible as a $KG$-module. 

\paragraph{Cyclotomic fields.}
Throughout this article, $\bar\Q$ denotes the algebraic closure of $\Q$ in $\C$.
Let $\zeta_n\in\bar\Q$ be a fixed but arbitrary primitive $n$th root of unity and
let $\E_n = \Q(\zeta_n)$ denote the $n$th cyclotomic field.
For $n = 2^jm$ where $m$ is odd,
let $\E_n^{\pm} = \Q(\zeta_{2^j}^{\phantom 1} \pm \zeta_{2^j}^{-1})
\E_m^{\phantom\pm} \subset \E_n^{\phantom\pm}$ if $j \ge 3$ and 
$\E_n^\pm = \E_m^{\phantom+}$ for $0 \le j \le 2$.
It is easy to see that $\E_n^\pm = \Q(\zeta_{2^j}^{k} \pm \zeta_{2^j}^{-k},\zeta_m^{\ell})$
for any odd $k \in \Z$ and $\ell \in \Z$ with $(\ell,m) = 1$.
We often let $\E_n^\circ$ denote one of the fields $\E_n^{\phantom 1}$, $\E_n^+$, and $\E_n^-$.
Note that if $n,m\in \N$ with $(n,m)=1$ and $\circ \in \{+, -, \phantom{*} \}$, 
then $\E_n^\circ \E_m^\circ = \E_{nm}^\circ$.
We often use the identities $\E_n \cap \E_m = \E_{(n,m)}$ and $\E_n
\E_m = \E_{\lcm(n,m)}$, see \cite[\S 11]{Str98}.

\section{Irreducible \ANC{} groups}
\label{s:anc_constr}

Throughout, let $K$ be a field of characteristic zero with algebraic closure
$\bar K$.
In this section, we prove the following.
\begin{thm}
  \label{thm:irranc}
  Let $G$ be an \ANC{} group and let $K$ be a field of characteristic zero.
  There exists an irreducible linear group $G(K)$ over $K$ with $G \cong G(K)$.
  Moreover, $G(K)$ is unique up to similarity.
\end{thm}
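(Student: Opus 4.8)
The plan is to establish existence and uniqueness separately, in both cases reducing to the structure theory of \ANC{} groups recorded in Theorem~\ref{thm:anc_class}. For \textbf{existence}, the natural strategy is to produce a faithful irreducible $\bar K$-character $\chi$ of $G$ and then realise a suitable $KG$-module over $K$ using the Schur-index machinery recalled above. Since $G$ is an \ANC{} group, $G_{2'}$ is cyclic and $G_2$ is one of $\Quat 8$, $\Dih{2^j}$, $\SDih{2^j}$, $\Quat{2^j}$ ($j\ge 4$); each of these has a faithful irreducible complex character (of degree $1$ for the cyclic part and degree $2$ for $G_2$), and tensoring these gives a faithful irreducible $\chi\in\Irr_{\bar K}(G)$, which we may take inside $\E_n$ for suitable $n$. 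Writing $\psi=\Schur_K(\chi)\bigl(\sum_{\sigma\in\Gamma}\chi^\sigma\bigr)$ with $\Gamma=\Gal(K(\chi)/K)$, the discussion of Schur indices in Section~\ref{s:background} shows $\psi$ is afforded by an irreducible $KG$-module $V$; the kernel of the corresponding representation is a normal subgroup contained in $\ker\chi=1$ intersected with its conjugates, hence trivial, so $G$ embeds in $\GL(V)$ as an irreducible linear group $G(K)$.

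For \textbf{uniqueness up to similarity}, suppose $V$ and $V'$ are two faithful irreducible $KG$-modules; we must show the associated characters coincide (two $KG$-modules with the same character are isomorphic, whence the linear groups are similar by conjugating via a $K$-isomorphism $V\to V'$). Each affords a character of the shape $\Schur_K(\chi)\sum_{\sigma}\chi^\sigma$ for some faithful $\chi,\chi'\in\Irr_{\bar K}(G)$, so it suffices to show that all faithful $\chi\in\Irr_{\bar K}(G)$ are Galois-conjugate over $K$, and that the Schur index is the same for all of them — the latter is automatic once Galois conjugacy is known, since $\Schur_K$ is constant on $\Gal(\bar K/K)$-orbits. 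The key point is therefore: \emph{an \ANC{} group has, up to $\Gal(\bar K/\Q)$-conjugacy, a single faithful irreducible $\bar K$-character.} This follows by inspecting the factor $G=G_{2'}\times G_2$: the faithful linear characters of the cyclic group $G_{2'}$ form a single $\Gal$-orbit (they are the primitive characters, permuted transitively by the Galois action on roots of unity), and each of the relevant $2$-groups $\Quat 8$, $\Dih{2^j}$, $\SDih{2^j}$, $\Quat{2^j}$ has a unique faithful irreducible character except for $\Dih{2^j}$ and $\SDih{2^j}$, whose (two, resp.\ more) faithful irreducible characters of degree $2$ are again permuted transitively by $\Gal(\E_{2^j}/\Q)$ — here one uses that these characters are distinguished by the value of a faithful character on a generator of the cyclic maximal subgroup, and that value runs through a full $\Gal$-orbit of roots of unity. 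Tensoring, all faithful $\chi\in\Irr_{\bar K}(G)$ lie in one $\Gal(\bar K/\Q)$-orbit, a fortiori in one $\Gal(\bar K/K)$-orbit restricted appropriately.

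The \textbf{main obstacle} I expect is the bookkeeping in the uniqueness argument for the $2$-part: one must carefully enumerate the faithful irreducible characters of $\Dih{2^j}$ and $\SDih{2^j}$ (the dihedral and semidihedral groups have several such characters, unlike $\Quat{2^j}$ which has exactly one) and verify that the Galois group $\Gal(\E_{2^j}/\Q)\cong(\Z/2^j\Z)^\times$ acts transitively on them via its action on the eigenvalues of an element of order $2^{j-1}$. The absence of $\Dih 8$ from Theorem~\ref{thm:anc_class} is reassuring here, since $\Dih 8$ would be a borderline case; for $j\ge 4$ the two faithful characters of $\Dih{2^j}$ having the same kernel-free behaviour are swapped by a suitable Galois automorphism, and similarly for $\SDih{2^j}$. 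Once this transitivity is in hand, both existence and uniqueness follow formally from the Schur-index description in Section~\ref{s:background}, with no further arithmetic input.
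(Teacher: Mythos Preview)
Your existence argument is fine, but the uniqueness argument contains a genuine error in its final step. You correctly establish that all faithful $\chi\in\Irr_{\bar K}(G)$ lie in a single $\Gal(\bar K/\Q)$-orbit, and then assert that ``a fortiori'' they lie in a single $\Gal(\bar K/K)$-orbit. This inference goes the wrong way: $\Gal(\bar K/K)$ is a \emph{subgroup} of $\Gal(\bar K/\Q)$, so a single orbit under the larger group may well break into several orbits under the smaller one. Concretely, take $G=\Cyc 5$ and $K=\E_5$: the four faithful linear characters of $G$ form one $\Gal(\bar\Q/\Q)$-orbit but four distinct $\Gal(\bar\Q/K)$-orbits, so $G$ has four pairwise inequivalent faithful irreducible $K$-representations. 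The paper itself flags this phenomenon in the paragraph introducing the theorem (``$G$ might well have several inequivalent faithful irreducible $K$-representations''). Thus you cannot hope to show that the $K$-characters coincide; what is true, and what the theorem asserts, is only that the \emph{images} in $\GL(V)$ are similar.

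The natural repair is to replace Galois conjugacy by twisting with $\Aut(G)$: one checks that any two faithful $\chi,\chi'\in\Irr_{\bar K}(G)$ satisfy $\chi'=\chi\circ\alpha$ for some $\alpha\in\Aut(G)$ (for the cyclic factor this is the transitive action of $(\Z/m\Z)^\times$ on generators; for the $2$-group factor it is the observation that the $\varrho_k^G$ of Proposition~\ref{prop:ord_char_anc} are permuted by the automorphisms $a\mapsto a^{k'}$). It then follows that the associated irreducible $K$-representations satisfy $\rho'\cong\rho\circ\alpha$, whence their images coincide after conjugation. The paper takes an equivalent but more hands-on route: it observes directly that the explicitly constructed $K$-representations $\sigma_k$ all have the same image as $k$ varies, and handles the odd cyclic part via the enveloping-algebra argument of Lemma~\ref{lem:cyclo_unique}. (A smaller slip: $\Quat{2^j}$ does \emph{not} have a unique faithful irreducible character once $j\ge 4$---there are $2^{j-3}$ of them, again forming a single $\Aut$-orbit; only $\Quat 8$ is exceptional here.)
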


As an application, we obtain the following characteristic zero analogue of \cite[Thm~5.11]{DF05}.

\begin{cor}
Abstractly isomorphic primitive finite nilpotent linear groups over a field of
characteristic zero are similar. \qed
\end{cor}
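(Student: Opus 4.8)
The plan is to read this off directly from Theorem~\ref{thm:irranc}, the only extra ingredient being the (standard) fact that a primitive finite nilpotent linear group over $K$ is an irreducible \ANC{} group. So let $G \le \GL(V)$ and $H \le \GL(W)$ be primitive finite nilpotent linear groups over $K$ with $G \cong H$ as abstract groups. First I would note that $G$ and $H$ are irreducible, since by definition a primitive linear group is irreducible. Next, as recalled in the introduction, Clifford's theorem forces every abelian normal subgroup of a finite nilpotent primitive linear group to be cyclic: if $A \normal G$ is abelian, then primitivity makes $V$ $A$-homogeneous, so $A$ acts by scalars on $V \otimes_K \bar K$; since $G$ acts faithfully on $V$, this exhibits $A$ as a finite subgroup of $\bar K^\times$, hence cyclic. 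Thus $G$ and $H$ are \ANC{} groups.

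Now let $\Gamma$ be the common abstract isomorphism type of $G$ and $H$; by the previous paragraph $\Gamma$ is an \ANC{} group. Theorem~\ref{thm:irranc} supplies an irreducible $K$-linear group $\Gamma(K)$ with $\Gamma(K) \cong \Gamma$, unique up to similarity; that is, every irreducible linear group over $K$ which is abstractly isomorphic to $\Gamma$ is similar to $\Gamma(K)$. Applying this uniqueness statement to the irreducible \ANC{} groups $G$ and $H$ shows that each of them is similar to $\Gamma(K)$, and hence $G$ and $H$ are similar to one another.

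There is no real obstacle here: the entire content of the corollary resides in Theorem~\ref{thm:irranc}, and the argument above is essentially a one-line deduction. The only point worth a word of care is that $G$ and $H$ are not a priori realised inside $\GL_d(K)$ for a common $d$; but the uniqueness clause of Theorem~\ref{thm:irranc} is exactly the assertion that the similarity class of an irreducible $K$-linear group isomorphic to a given \ANC{} group is determined (in particular its degree is), so this is already subsumed. In fact the same reasoning yields slightly more: abstractly isomorphic irreducible finite nilpotent \ANC{} linear groups over a field of characteristic zero are similar, with primitivity entering only to guarantee irreducibility together with the \ANC{} property.
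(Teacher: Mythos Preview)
Your argument is correct and mirrors the paper's own treatment: the corollary is stated there with a bare \qed{} as an immediate consequence of Theorem~\ref{thm:irranc} together with the fact (recalled in the introduction) that primitive finite nilpotent linear groups are \ANC{} groups. One minor slip in your Clifford sketch: $A$-homogeneity of $V$ over $K$ does not force $A$ to act by scalars on $V\otimes_K\bar K$ (distinct Galois-conjugate $\bar K$-characters may appear), but the conclusion survives since $K[A]$ is then a field and the finite group $A$ embeds in $K[A]^\times$, hence is cyclic.
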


While the exact degree of $G(K)$ in Theorem~\ref{thm:irranc} depends on arithmetic
questions, our proof of Theorem~\ref{thm:irranc} will allow us to deduce the
following asymptotic statement.
\begin{prop}
  \label{prop:count}
  Let $K/\Q$ be a finitely generated field extension and let $\varepsilon > 0$.
  The number of conjugacy classes of primitive finite nilpotent subgroups of
  $\GL_d(K)$ is $\mathcal O(d^{1+\varepsilon})$.
\end{prop}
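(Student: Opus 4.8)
The plan is to exploit Theorem~\ref{thm:irranc} together with the structure theorem for \ANC{} groups (Theorem~\ref{thm:anc_class}) to reduce the counting problem to a statement about how many \ANC{} groups admit a faithful primitive $K$-representation of a given degree $d$. Since primitive finite nilpotent linear groups over $K$ are \ANC{} groups, and since each such abstract group $G$ determines its irreducible $K$-linear realisation $G(K)$ uniquely up to similarity (hence up to conjugacy in $\GL_d(K)$), it suffices to bound the number of isomorphism types of \ANC{} groups $G$ for which $G(K)$ is irreducible of degree exactly $d$ — indeed, even dropping primitivity and just counting those $G$ with $\dim_K G(K) = d$ gives an upper bound. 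So the real task is: for each $d$, how many \ANC{} groups $G$ have $\deg G(K) = d$?

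First I would recall from Theorem~\ref{thm:anc_class} that an \ANC{} group is determined by the pair $(G_{2'}, G_2)$ where $G_{2'}$ is cyclic — say of order $m$ — and $G_2$ is one of $\Quat 8$, $\Dih{2^j}$, $\SDih{2^j}$, $\Quat{2^j}$ ($j\ge 4$). So $G$ is specified by $m$, a $2$-power $2^j$, and a bounded choice (at most four types). Next I would analyse how $\deg G(K)$ grows with these parameters. The degree of an irreducible $K$-representation of $G$ is the $K$-dimension of the underlying module, which factors as (degree of an absolutely irreducible constituent) $\times$ (size of the relevant Galois orbit of characters) $\times$ (Schur index), all of these being computed inside cyclotomic-type extensions of $K$. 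The key point is a \emph{lower} bound: I expect $\deg G(K) \gg \varphi(\lcm(m,2^j))^{1-\varepsilon}$ or something of this shape, coming from the fact that a faithful representation must involve a primitive $\lcm(m,2^{j'})$-th root of unity (with $j' = j$ or $j-1$), so $\E_{\lcm(m,2^{j'})}$ or one of its real subfields $\E_n^\pm$ sits inside the coefficient field, forcing $[\,\cdot : K\,] \gg [\E_n : \E_n \cap K] \gg \varphi(n)^{1-\varepsilon/2}$ by standard estimates $\varphi(n) \gg n^{1-\varepsilon/2}$. Conversely, for a target degree $d$ this confines $n = \lcm(m,2^{j'})$ to a range of size $O(d^{1+\varepsilon/2})$; for each such $n$ there are $O(d^{\varepsilon/2})$ (indeed, boundedly many up to the divisor function, which is $d^{o(1)}$) factorisations into the odd part $m$ and the $2$-part, and at most four choices of $G_2$-type. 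Multiplying these together yields $O(d^{1+\varepsilon})$ after renaming $\varepsilon$.

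I would organise the write-up as follows: (1) reduce to counting \ANC{} groups $G$ with $\deg G(K) = d$ via Theorem~\ref{thm:irranc}; (2) parametrise \ANC{} groups by $(m, 2^j, \text{type})$ using Theorem~\ref{thm:anc_class}; (3) prove the lower bound $\deg G(K) \ge c \cdot [\E_n : K \cap \E_n]$ for the appropriate $n$, by noting that a faithful module for $G$ must, after restricting to a cyclic subgroup of order $n$ (or $n$ after adjusting the $2$-part by a factor $2$), contain a faithful character, whose field of values contains $\E_n$ or $\E_n^\pm$, so the Schur-index decomposition in the Schur-indices paragraph forces $\E_n^\circ \subset$ the centre field of $K[G]$, giving the degree bound; (4) invoke $\varphi(n) = \Omega(n^{1-\varepsilon})$ and the divisor bound $\tau(n) = O(n^{\varepsilon})$ to conclude. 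The main obstacle I anticipate is step (3): getting a clean, uniform lower bound on $\deg G(K)$ in terms of $n$ that is robust across the four types of $G_2$ and the interaction with $K$ — in particular, making sure that passing to real subfields $\E_n^\pm$ (which can happen when $-1$ is realised as a commutator-type element in the quaternion/dihedral cases) still only costs a bounded index $[\E_n : \E_n^\pm] \le 2$, so the estimate $\deg G(K) \gg \varphi(n)^{1-\varepsilon/2}$ survives. Everything else is either bookkeeping or an appeal to elementary analytic number theory.
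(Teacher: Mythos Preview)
Your proposal is correct and follows the same strategy as the paper: reduce via Theorem~\ref{thm:irranc} to counting \ANC{} groups $G$ with $\deg G(K) = d$, bound the relevant integer $n$ by $O(d^{1+\varepsilon})$ via the cyclotomic degree estimate $n \le C\,\idx{\E_n K : K}^{1+\varepsilon}$, and note that each $n$ supports boundedly many isomorphism types. The paper streamlines your step~(3) by reading off from the explicit constructions in \S\ref{s:anc_constr} that $\deg G(K) \in \{\psi(n), 2\psi(n)\}$ with $\psi(n) = \idx{\E_n K : K}$ (so no separate lower-bound argument via character fields is needed), and your divisor-bound in step~(4) is unnecessary since the factorisation $n = 2^{j'} m$ into $2$-part and odd part is unique.
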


It is natural to ask for the precise number of primitive finite nilpotent
subgroups of~$\GL_d(K)$.
Even for $K = \Q$, this problem is related to challenging number-theoretic
questions.
Indeed, denoting Euler's totient function by $\varphi$,
Theorem~\ref{thm:cyclo_prim}(\ref{thm:cyclo_prim1}) below provides us with a 
bijection between square-free integers $n \in \N$ with
$\varphi(n) = d$ and conjugacy classes of primitive finite cyclic
subgroups of $\GL_d(\Q)$;
for the problem of enumerating solutions $n$ of $\varphi(n)=d$, see e.g.~\cites{For99,CCS06}.

\subsection{Special case: cyclic groups}
\label{s:anc_cyclic}

First, we consider the easy case of cyclic groups in
Theorem~\ref{thm:irranc}. 
For the existence part, $\langle \zeta_m \rangle \le \GL_1(\E_m K)$
is irreducible when regarded as a $K$-linear group.
\begin{lemma}
  \label{lem:cyclo_unique}
  Let $G = \langle g \rangle$ and $H$ be homogeneous finite linear groups over $K$.
  If $G\cong H$, then there exists a generator $h\in H$ of $H$ such that
  $K[G]\cong K[H]$ via  $g\mapsto h$.
\end{lemma}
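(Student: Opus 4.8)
The plan is to work with the simple $K$-algebras $K[G]$ and $K[H]$ and their centres. Since $G = \langle g\rangle$ is a finite cyclic group, $K[G] = K[g]$ is a commutative homogeneous algebra, hence a field; write $m$ for the order of $g$, so that $K[G] \cong K[x]/(f(x))$ where $f$ is the minimal polynomial of $g$ over $K$, a divisor of $x^m - 1$. In particular $g$ is a primitive $m$th root of unity inside the field $K[G]$, and $K[G] = K(g) \cong \E_m^{\phantom\pm}$ with an identification sending $g$ to $\zeta_m$ (for a suitable choice of primitive root of unity). The isomorphism $G \cong H$ forces $H$ to be cyclic of order $m$ as well; pick any generator $h_0$ of $H$. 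First I would observe that $H$ homogeneous means $K[H]$ is simple; but since $H$ is abelian, $K[H]$ is also commutative, so $K[H]$ is a field. As $h_0$ has order $m$, it is again a primitive $m$th root of unity in the field $K[H]$, so $K[H] = K(h_0) \cong \E_m^{\phantom\pm}$.

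The core of the argument is then to match up $g$ with the \emph{right} generator of $H$. Both $K[G]$ and $K[H]$ are isomorphic to $\E_m^{\phantom\pm}$ over $K$, but the issue is that an arbitrary abstract $K$-algebra isomorphism $K[G] \to K[H]$ need not send $g$ to $h_0$; it sends $g$ to \emph{some} primitive $m$th root of unity in $K[H]$. So fix any $K$-algebra isomorphism $\varphi\colon K[G] \to K[H]$ (these exist since both are $K$-isomorphic to $\E_m^{\phantom\pm}$). Then $h \coloneqq \varphi(g)$ is an element of $K[H]$ of multiplicative order $m$ lying in the group $\mu_m$ of $m$th roots of unity in the field $K[H]$. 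Since $K[H] = K[H]^{\phantom x}$ contains the cyclic group $\langle h_0\rangle$ of order $m$ and the roots of $x^m - 1$ in a field form a group of order at most $m$, we get $\mu_m(K[H]) = \langle h_0 \rangle = H$ (as subgroups of $K[H]^\times$, identifying $H \le \GL(W)$ with its image in $K[H]^\times$). Hence $h \in H$, and $h$ has order $m$, so $h$ generates $H$. By construction $\varphi$ restricts to an isomorphism $K[G] \cong K[H]$ with $g \mapsto h$, which is exactly what is claimed.

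I expect the main point requiring care — rather than a genuine obstacle — to be the identification of $H$, viewed a priori as a subgroup of $\GL(W)$, with a subgroup of the multiplicative group $K[H]^\times$, together with the claim that the full group of $m$th roots of unity in the field $K[H]$ coincides with $H$. This rests on: (a) $K[H]$ being a field, which is where homogeneity of $H$ is used (an abelian group is homogeneous iff $K[H]$ is a field, as recorded in the Background section); (b) the elementary fact that in any field, $x^m - 1$ has at most $m$ roots, which pins $\mu_m(K[H])$ down once we know it contains the order-$m$ subgroup $H$. One should also check that $\idx{K[H]:K} = \varphi(m)$ — equivalently that $h_0$ has minimal polynomial the $m$th cyclotomic polynomial over $K$ — is \emph{not} actually needed: all we use is that $K[G]$ and $K[H]$, being fields generated over $K$ by a primitive $m$th root of unity, are abstractly $K$-isomorphic, which is immediate once one fixes compatible primitive roots of unity; no degree computation is required. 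Everything else is bookkeeping.
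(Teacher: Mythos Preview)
Your argument is correct and follows essentially the same route as the paper's: both hinge on the observation that, since $K[H]$ is a field, the $m$th roots of unity in $K[H]$ are precisely the elements of $H$, so the image of $g$ under any $K$-isomorphism $K[G]\to K[H]$ is a generator of $H$. The paper obtains this isomorphism slightly more directly by choosing $h\in K[H]$ to be a root of the minimal polynomial $f$ of $g$ (such a root exists since $\divides f{\phi_m}$ and $\phi_m$ splits in $K[H]$), bypassing the need to first name an abstract isomorphism; note also that your identification $K[G]\cong\E_m$ should read $K[G]\cong\E_m K$, though as you yourself point out only the fact that $K[G]$ and $K[H]$ are $K$-isomorphic is actually used.
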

\begin{proof}
  Write $m = \card{G} = \card{H}$.
  The $m$th cyclotomic polynomial $\phi_m$ splits completely both over
  $K[G]$ and over $K[H]$.
  Let $f$ be the minimal polynomial of $g$ over $K$.
  Then $\divides f {\phi_m}$ whence   $f(h) = 0$
  for some $h \in K[H]$.
  As $K[H]$ is a field, the roots of $X^m - 1$ in $K[H]$ are precisely
  the elements of $H$.
  Since $h$ is a primitive $m$th root of unity, we conclude that
  $H = \langle h \rangle$.
  The map $g\mapsto h$ now induces isomorphisms $G\to H$ and $K[G]\to K[H]$.
\end{proof}

Thus, if $G$ and $H$ are both irreducible, then $G$ and $H$ are similar which
completes the proof of Theorem~\ref{thm:irranc} for cyclic groups. 

\subsection{Faithful irreducible $K$-representations of \ANC{} $2$-groups}
\label{s:anc2}

We recall constructions of the irreducible $\bar K$-representations of an \ANC{}
2-group $G$.
We then compute the Schur indices of their characters and construct the faithful
irreducible $K$-representations of $G$.

Having fixed $\bar K$, we henceforth identify $\bar\Q \subset \bar K$ which
allows us to consider composite fields of the form $\E_{2^j}^\circ K$, where
$\E_{2^j}^\circ$ is defined as in \S\ref{s:background}.
As in \cite[\S 7]{primnil}, for a non-abelian \ANC{} group $G$,
let $\anctype(G) = 1$ if $G_2$ is (semi)dihedral and
$\anctype(G) = -1$ if $G_2$ is generalised quaternion.
Further let $\ancflav(G) = 1$ if $G_2$ is dihedral or generalised quaternion
and $\ancflav(G) = -1$ if $G_2$ is semidihedral.

\begin{prop}[{Cf.\ \cite[Prop.\ 10.1.16]{LGM02}}]
  \label{prop:ord_char_anc}
  Let $G = \langle a, g\rangle$ be a non-abelian \ANC{} $2$-group (or $G\cong \Dih
  8$),
  where $\langle a \rangle$ is cyclic of order $2^j$ and index $2$ in $G$ and
  $g^2 = 1$ if $\anctype(G) = 1$ and $g^4 = 1$ if $\anctype(G) = -1$.
  Up to equivalence, the faithful irreducible $\bar K$-representations
  of~$G$ (written over the splitting field
  $\E_{2^j}K$ of $G$) are precisely given by
  \[
  \varrho^G_k\colon G \to \GL_2(\E_{2^j}K),
  \mapspacing
  a \mapsto \begin{mat} \zeta_{2^j}^k & 0 \\ 0 & \ancflav(G) \zeta_{2^j}^{-k} \end{mat}\!,
  \,\,
  g \mapsto \begin{mat} 0 & 1 \\ \anctype(G) & 0 \end{mat},
  \]
  where $0 < k < 2^{j-1}$ and $k$ is odd.
\end{prop}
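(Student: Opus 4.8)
The plan is to classify the faithful irreducible representations of $G$ directly from the structure of $G$, treating the (semi)dihedral and generalised quaternion cases uniformly via the parameters $\anctype(G)$ and $\ancflav(G)$. First I would recall that $\langle a\rangle$ is an abelian normal subgroup of $G$ of index $2$, so by Clifford theory every irreducible $\bar K$-representation $\varrho$ of $G$ either restricts irreducibly to $\langle a\rangle$—impossible since $\langle a\rangle$ is abelian and $[G:\langle a\rangle]=2$ forces degree $\le 2$—or is induced from a linear character $\lambda$ of $\langle a\rangle$. Thus every irreducible $\bar K$-representation of $G$ of degree $2$ has the form $\mathrm{Ind}_{\langle a\rangle}^G\lambda$, and such an induced representation is irreducible precisely when $\lambda\ne\lambda^g$, where $g$ acts on $\langle a\rangle$ by the defining relation ($a^g = a^{-1}$ in the dihedral and quaternion cases, $a^g=a^{-1+2^{j-1}}$ in the semidihedral case). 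Faithfulness of $\varrho=\mathrm{Ind}_{\langle a\rangle}^G\lambda$ forces $\lambda$ to be faithful on $\langle a\rangle$, i.e.\ $\lambda(a)=\zeta_{2^j}^k$ with $k$ odd; conversely one checks that a faithful $\lambda$ yields a faithful induced representation because the only possible kernel is a normal subgroup of $G$ contained in $\langle a\rangle$ on which $\lambda$ is trivial, and for $k$ odd the map $\varrho^G_k(g)$ already has trivial fixed space issues that would make the kernel nontrivial—here I would just verify $\varrho^G_k(g)^2 = \anctype(G)\cdot I$ matches the relation $g^2=1$ or $g^4=1$.

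Next I would write the induced representation in the explicit basis $\{e, \varrho(g)e\}$ adapted to the coset decomposition $G = \langle a\rangle \sqcup \langle a\rangle g$: in this basis $a$ acts diagonally by $\mathrm{diag}(\lambda(a), \lambda(a^g)) = \mathrm{diag}(\zeta_{2^j}^k, \lambda(a)^{\pm 1})$, and $g$ acts by the permutation-type matrix $\bigl[\begin{smallmatrix} 0 & \lambda(g^2) \\ 1 & 0\end{smallmatrix}\bigr]$. After rescaling the basis one normalises $g$ to $\bigl[\begin{smallmatrix} 0 & 1 \\ \anctype(G) & 0\end{smallmatrix}\bigr]$. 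Here the value $\lambda(a^g)$ is $\zeta_{2^j}^{-k}$ in the dihedral/quaternion case and $\zeta_{2^j}^{-k}\zeta_{2^j}^{k\cdot 2^{j-1}} = -\zeta_{2^j}^{-k}$ in the semidihedral case, since $k$ is odd; this is exactly the factor $\ancflav(G)$, giving the stated diagonal entry $\ancflav(G)\zeta_{2^j}^{-k}$. The entries visibly lie in $\E_{2^j}K$, and since $\varrho^G_k$ is absolutely irreducible its matrix entries generate the character field, which is $\E_{2^j}K$ (or a subfield thereof), confirming that $\E_{2^j}K$ is a splitting field.

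Finally I would count the representations and match them with the parameter range $0 < k < 2^{j-1}$, $k$ odd. Two induced representations $\mathrm{Ind}\,\lambda$ and $\mathrm{Ind}\,\mu$ are equivalent iff $\mu\in\{\lambda,\lambda^g\}$, so the faithful irreducible degree-$2$ representations are parametrised by the $\{\lambda,\lambda^g\}$-orbits of faithful linear characters of $\langle a\rangle$. A faithful $\lambda$ has $\lambda(a)=\zeta_{2^j}^k$ with $k$ odd, $1\le k\le 2^j-1$, giving $2^{j-1}$ characters; the involution $\lambda\mapsto\lambda^g$ sends $k\mapsto -k$ (dihedral/quaternion) or $k\mapsto -k(1-2^{j-1})\equiv -k \pmod{2^j}$ for $k$ odd (semidihedral)—in either case $k\mapsto -k$—which is fixed-point-free on odd residues modulo $2^j$ for $j\ge 2$, so there are $2^{j-2}$ orbits, matched exactly by $0<k<2^{j-1}$ with $k$ odd. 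One checks that all $2^{j-1}$-dimensional worth of irreducible $\bar K$-representations are accounted for (the faithful ones are exactly the degree-$2$ ones not factoring through a proper quotient), so no faithful irreducible representation is missed. The main obstacle I anticipate is the bookkeeping in the semidihedral case: one must be careful that the twisted action of $g$ on $\langle a\rangle$ still sends a \emph{faithful} $\lambda$ to its "negative" rather than to something else, and that the sign $\ancflav(G)$ emerges cleanly—this hinges precisely on $k$ being odd, which is why the oddness hypothesis is not merely a normalisation but essential to the statement. The $\Dih 8$ case (where $j=2$) is then just the smallest instance and requires no separate argument beyond noting $2^{j-2}=1$ representation with $k=1$.
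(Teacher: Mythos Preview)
The paper does not give its own proof of this proposition; it is quoted (with ``Cf.'') from \cite{LGM02}. Your Clifford-theoretic approach via induction from the index-$2$ cyclic subgroup $\langle a\rangle$ is the standard one and is correct in outline, so there is no meaningful difference in strategy to report.

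There is, however, a concrete computational error in your semidihedral bookkeeping---precisely the point you flagged as the main obstacle. You assert that the involution $\lambda\mapsto\lambda^g$ sends $k\mapsto -k(1-2^{j-1})\equiv -k\pmod{2^j}$ for $k$ odd. This is false: for $k$ odd one has $-k(1-2^{j-1}) = -k + k\cdot 2^{j-1}\equiv -k + 2^{j-1}\pmod{2^j}$, not $-k$. Hence in the semidihedral case the orbit of an odd $k$ with $0<k<2^{j-1}$ is $\{k,\,2^{j-1}-k\}$, and \textit{both} elements lie in that same interval. The range $0<k<2^{j-1}$, $k$ odd, therefore does not pick out one representation per equivalence class: for $\SDih{16}$ (so $j=3$) you get $\varrho_1^G\cong\varrho_3^G$ (both have $\chi(a)=\zeta_8-\zeta_8^{-1}=i\sqrt 2$), while the second faithful irreducible, with $\chi(a)=-i\sqrt 2$, corresponds to $k\in\{5,7\}$ and is absent from the list. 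Your global count of $2^{j-2}$ faithful irreducibles is right, but your claimed parametrisation fails to realise it when $\ancflav(G)=-1$. (This also shows that the proposition, read as asserting a bijection, is itself imprecise in the semidihedral case; the paper is unaffected because only the \textit{image} of $\varrho_k^G$ is used downstream, and that image is the same for every odd $k$.)
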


Henceforth, let $k$ be as in Proposition~\ref{prop:ord_char_anc}.
Let $\chi_k^G$ be the character of $\varrho_k^G$
with character field $K(\chi_k^G) = K(\zeta_{2^j}^{k} + \ancflav(G) \dtimes
\zeta_{2^j}^{-k})$ over $K$ (see~\cite[Prop.~10.1.17]{LGM02});
note that $K(\chi_k^G) = \E_{2^j}^\pm$ does not depend on~$k$.
We now consider the Schur indices of these characters.

\begin{lemma}[{\cite[Prop.\ 10.1.17(i)]{LGM02}}]
  \label{lem:schur_dih}
  $\Schur_K(\chi_k^G) = 1$
  if $G$ is (semi)dihedral (i.e.~$\anctype(G) = 1$).
\end{lemma}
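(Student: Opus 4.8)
The plan is to exhibit, for each $k$, an explicit representation of $G$ over the character field $K(\chi_k^G) = \E_{2^j}^+$ that affords the character $\chi_k^G$; by the discussion of Schur indices in the Background section, this forces $\Schur_K(\chi_k^G) = 1$. Since $G$ is (semi)dihedral we have $\anctype(G) = 1$, so the matrix for $g$ in $\varrho_k^G$ is $\begin{mat} 0 & 1 \\ 1 & 0 \end{mat}$, which is already defined over $\Q$. The only obstruction to $\varrho_k^G$ being realisable over the smaller field is the diagonal matrix for $a$, whose entries $\zeta_{2^j}^k$ and $\ancflav(G)\zeta_{2^j}^{-k}$ live in $\E_{2^j}$ but not in $\E_{2^j}^+$.

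First I would conjugate $\varrho_k^G$ by a suitable change of basis to bring the image of $a$ into rational-canonical (companion-matrix) form over $\E_{2^j}^+$. Concretely, $a$ has characteristic polynomial $X^2 - (\zeta_{2^j}^k + \ancflav(G)\zeta_{2^j}^{-k})X + \ancflav(G)$, whose coefficients lie in $K(\chi_k^G) = \E_{2^j}^+$ since $\zeta_{2^j}^k + \ancflav(G)\zeta_{2^j}^{-k}$ does and $\ancflav(G) = \pm 1$. Replacing $\varrho_k^G$ by the equivalent representation sending $a$ to the companion matrix of this polynomial, one then has to check that the image of $g$ under the same conjugation still has entries in $\E_{2^j}^+$. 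Because $g$ inverts $a$ up to the scalar ambiguity encoded by $\ancflav$ and $\anctype$, the conjugating matrix is built from the eigenvector data of $a$, and the key computation is that the relation $g a g^{-1} = a^{-1}$ (for dihedral) or $gag^{-1}=a^{-1}$ with $g^2=a^{2^{j-1}}$ (for semidihedral) is Galois-stable under $\Gal(\E_{2^j}/\E_{2^j}^+) = \langle \zeta_{2^j}\mapsto\zeta_{2^j}^{-1}\rangle$, so that the whole representation descends.

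The cleanest route, which I would actually write up, is to invoke the general principle for metacyclic groups: one checks directly that
\[
  a \mapsto \begin{mat} 0 & -\ancflav(G) \\ 1 & \zeta_{2^j}^k + \ancflav(G)\zeta_{2^j}^{-k} \end{mat},
  \qquad
  g \mapsto \begin{mat} 1 & \zeta_{2^j}^k + \ancflav(G)\zeta_{2^j}^{-k} \\ 0 & -1 \end{mat}
\]
defines a representation of $G$ over $\E_{2^j}^+$, by verifying the defining relations (order of $a$, order of $g$, and the conjugation relation), and that it is irreducible with character $\chi_k^G$ — the latter because its trace on $a$ is $\zeta_{2^j}^k + \ancflav(G)\zeta_{2^j}^{-k}$, matching $\varrho_k^G$, and traces on the remaining conjugacy classes are likewise forced. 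Since this realises $\chi_k^G$ over its own character field, $\Schur_K(\chi_k^G) = 1$ by definition of the Schur index.

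The main obstacle I anticipate is purely bookkeeping: getting the matrix for $g$ right simultaneously in the dihedral and semidihedral cases, i.e.\ making the signs $\anctype(G)$ and $\ancflav(G)$ bookkeep correctly through the change of basis, and confirming that $g$ has the correct order ($2$ versus the semidihedral constraint) in the new form. This is routine $2\times 2$ matrix algebra over $\E_{2^j}^+$ and can be dispatched with a short direct verification; no deep input beyond the already-recorded description of $K(\chi_k^G)$ is needed.
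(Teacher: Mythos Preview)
Your overall strategy---exhibit an explicit realisation of $\chi_k^G$ over its character field---is correct and is exactly what the paper does.  The paper itself merely cites this lemma, but a few paragraphs later it constructs such a realisation: with $Z = K(\chi_k^G)$ and $L = \E_{2^j}K = Z(\zeta_4)$, it embeds $L$ into $\Mat_2(Z)$ via the regular representation $\psi$, sending $a \mapsto (\zeta_{2^j}^k)\psi$ and $g \mapsto \diag(1,-1)$.  Your companion-matrix realisation is an equivalent alternative; I checked that your matrices satisfy $g^2 = 1$ and $gag^{-1} = a^{-1}$ (dihedral) resp.\ $gag^{-1} = a^{2^{j-1}-1}$ (semidihedral), so they do define a representation with the right character.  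The two explicit forms are related by a change of basis over $L$.

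Two small slips to fix.  First, the character field is $K(\chi_k^G) = \E_{2^j}^{\ancflav(G)} K$, not $\E_{2^j}^+$: for the semidihedral case $\ancflav(G) = -1$ and $\zeta_{2^j}^k - \zeta_{2^j}^{-k}$ lies in $\E_{2^j}^-$, not $\E_{2^j}^+$.  Your matrices only use $t = \zeta_{2^j}^k + \ancflav(G)\zeta_{2^j}^{-k}$ and $\pm 1$, so they are in fact defined over the correct field $K(\chi_k^G)$; just correct the label.  Second, your parenthetical ``$gag^{-1}=a^{-1}$ with $g^2=a^{2^{j-1}}$ (for semidihedral)'' mixes up the relations: in the semidihedral group $g^2 = 1$ and it is the \emph{conjugation} relation that changes to $gag^{-1} = a^{-1+2^{j-1}}$.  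This confusion is confined to the discarded first paragraph and does not affect the matrices you actually write down.
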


For generalised quaternion groups,
we compute Schur indices using a variation of \cite[Prop.\ 10.1.17(ii)--(iii)]{LGM02}.
The case $G\cong \Quat 8$  of the following is
well-known, cf.\ \cite[p.~470]{CR62};
the first part can also be deduced from \cite[Prb.\ 10.5]{Isa76}.
\begin{lemma}
  \label{lem:schur_quat}
  Let $G \cong \Quat{2^{j+1}}$.
  If $x^2 + y^2 = -1$ is soluble in $K(\chi_k^G)$,
  then $\Schur_K(\chi_k^G) = 1$;  otherwise, $\Schur_K(\chi_k^G) = 2$.
\end{lemma}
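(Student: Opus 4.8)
The plan is to compute $\Schur_K(\chi_k^G)$ for $G \cong \Quat{2^{j+1}}$ by explicitly identifying the division algebra $K(\chi_k^G)[G]$ attached to the representation $\varrho_k^G$, which by the general theory is a quaternion algebra over the character field $F := K(\chi_k^G) = \E_{2^j}^\pm$. Recall from the excerpt that for $\anctype(G) = -1$ we have $\ancflav(G) = 1$ (generalised quaternion), so the matrices are $a \mapsto \diag(\zeta_{2^j}^k, \zeta_{2^j}^{-k})$ and $g \mapsto \left[\begin{smallmatrix} 0 & 1 \\ -1 & 0\end{smallmatrix}\right]$. First I would observe that $F(\chi_k^G)$ already contains $\alpha := \zeta_{2^j}^k + \zeta_{2^j}^{-k}$, so $\E_{2^j}^{\phantom\pm} = F(\zeta_{2^j}^k) = F(\beta)$ where $\beta := \zeta_{2^j}^k - \zeta_{2^j}^{-k}$ satisfies $\beta^2 = \alpha^2 - 4 \in F$; thus $\E_{2^j}K / F$ is a quadratic extension (it is proper since $j \ge 3$ forces $\zeta_{2^j} \notin \R$ while $F \subset \R \cdot K$ in the relevant sense — more precisely, $\zeta_{2^j}^k \ne \zeta_{2^j}^{-k}$). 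Over this quadratic splitting field the representation becomes reducible, which exhibits $\SchurIndex = 2$ unless the algebra already splits over $F$.

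The key computation is to pin down the quaternion algebra. Working inside $\Mat_2(\E_{2^j}K)$, the $F$-algebra $A := F[\varrho_k^G(G)]$ is spanned by $I$, the diagonal matrix $D := \varrho_k^G(a) - \varrho_k^G(a)^{-1}$ (which equals $\diag(\beta, -\beta)$), the matrix $J := \varrho_k^G(g) = \left[\begin{smallmatrix} 0 & 1 \\ -1 & 0\end{smallmatrix}\right]$, and $DJ$. One checks $D^2 = \beta^2 I = (\alpha^2 - 4)I \in F \cdot I$, $J^2 = -I$, and $DJ = -JD$. Hence $A \cong \left(\tfrac{\alpha^2 - 4,\, -1}{F}\right)$, the quaternion algebra with structure constants $\beta^2 = \alpha^2-4$ and $-1$. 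So $\Schur_K(\chi_k^G) = 1$ precisely when this algebra splits over $F = K(\chi_k^G)$, i.e.\ when the quadratic form $\langle 1, -(\alpha^2-4), -(-1), (\alpha^2-4)(-1)\rangle$ is isotropic, equivalently (standard quaternion-splitting criterion) when $-1$ is a norm from $F(\sqrt{\alpha^2-4}) = \E_{2^j}K$ down to $F$. Writing that norm condition out: $-1 = x^2 - (\alpha^2-4)y^2$ for some $x,y \in F$; but since $\beta = \sqrt{\alpha^2-4} \in \E_{2^j}K$ with $\beta^2 = \alpha^2-4$, an element $u + v\beta$ of $\E_{2^j}K$ has norm $u^2 - (\alpha^2-4)v^2$ over $F$. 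A small manipulation — essentially replacing the "norm" picture by the equivalent "sum of two squares" picture using that $\beta^2 \equiv -1$ modulo the appropriate square in $\E_{2^j}^\pm$, or directly via the isomorphism of quaternion algebras $\left(\tfrac{\alpha^2-4,-1}{F}\right) \cong \left(\tfrac{-1,-1}{F}\right)$ which I'd verify by exhibiting a suitable change of pure quaternion basis — reduces solubility to that of $x^2 + y^2 = -1$ in $F = K(\chi_k^G)$.

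The isomorphism $\left(\tfrac{\alpha^2-4,-1}{F}\right) \cong \left(\tfrac{-1,-1}{F}\right)$ deserves a clean argument: in the algebra with $F$-basis $1, \mathbf{i}, \mathbf{j}, \mathbf{i}\mathbf{j}$ and $\mathbf{i}^2 = \alpha^2-4$, $\mathbf{j}^2 = -1$, $\mathbf{i}\mathbf{j} = -\mathbf{j}\mathbf{i}$, the element $\mathbf{i}\mathbf{j}$ is a pure quaternion with $(\mathbf{i}\mathbf{j})^2 = -\mathbf{i}^2\mathbf{j}^2 = \alpha^2-4$; meanwhile I want a pure quaternion squaring to $-1$. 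The cleanest route is to note that $A$ splits over $F$ iff the ternary form $\langle -(\alpha^2-4), 1, (\alpha^2-4)\rangle$ (the pure-quaternion norm form, up to sign conventions) is isotropic over $F$, and this is visibly equivalent — after scaling the form and using that $\beta = \sqrt{\alpha^2-4}$ generates $\E_{2^j}K$ over $F$ — to representing $-1$ as $x^2+y^2$ over $F$, by identifying $\E_{2^j}K = F(\sqrt{-1})$ when $j \ge 3$: indeed $\zeta_4 = \zeta_{2^j}^{2^{j-2}} \in \E_{2^j} \subset \E_{2^j}K$, and one shows $\E_{2^j}K = \E_{2^j}^\pm(\zeta_4) = F(\sqrt{-1})$ because $[\E_{2^j}K : F] = 2$ and $\sqrt{-1} \notin F$ (as $F = K(\chi_k^G)$ is generated over $K$ by a real quantity — or, when $K$ itself is complicated, because $\zeta_4 \notin \E_{2^j}^\pm$ by the explicit description of $\E_n^\pm$ in \S\ref{s:background}). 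Then $-1$ being a norm from $F(\sqrt{-1})$ to $F$ is literally the assertion that $x^2+y^2=-1$ is soluble in $F$, and $\left(\tfrac{-1,-1}{F}\right)$ splits iff its norm form $x^2+y^2+z^2+w^2$ is isotropic iff $-1$ is such a sum of two squares. I expect the \textbf{main obstacle} to be the bookkeeping around the case $j = 2$ (the $\Quat 8$ case, $G \cong \Quat{2^{j+1}}$ with $j+1 = 3$) where $\E_{2^j}^\pm = \E_4^\pm = \Q$ collapses and one must check the criterion still reads correctly, together with making the identification $\E_{2^j}K = F(\sqrt{-1})$ airtight for a general characteristic-zero field $K$ rather than a number field — but the excerpt's remarks (the $\Quat 8$ case is classical, cf.\ \cite[p.~470]{CR62}, and the first part follows from \cite[Prb.\ 10.5]{Isa76}) indicate both are manageable, and the $j \ge 3$ generic case is the quaternion-algebra computation above.
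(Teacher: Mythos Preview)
Your approach is correct and genuinely different from the paper's. The paper does not compute the enveloping quaternion algebra directly; instead it invokes \cite[Prop.~10.1.17(ii)--(iii)]{LGM02}, which gives the Schur-index criterion in terms of solubility of $x^2 + \theta_j xy + y^2 = -1$ over $K(\theta_j)$ (where $\theta_j = \zeta_{2^j} + \zeta_{2^j}^{-1}$), and then proves by an explicit recursive construction that the binary form $x^2 + \theta_j xy + y^2$ is congruent to $x^2 + y^2$ over $\Q(\theta_j)$: it builds elements $\lambda_i \in \Q(\theta_i)$ with $\lambda_i^2 = 2 - \theta_{i-1} = 4 - \theta_i^2$ and writes down change-of-variable matrices. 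Your route---identifying $A \cong \bigl(\tfrac{\alpha^2-4,\,-1}{F}\bigr)$ and then observing that $F(\sqrt{\alpha^2-4}) = \E_{2^j}K = F(\sqrt{-1})$ forces $\alpha^2-4$ and $-1$ into the same square class (so $A \cong \bigl(\tfrac{-1,-1}{F}\bigr)$)---replaces that recursion by a one-line field-theoretic argument and avoids the dependence on \cite{LGM02}. The paper's version has the virtue of exhibiting the square root $\lambda_j$ of $4-\theta_j^2$ explicitly, which is occasionally useful elsewhere; yours is cleaner and more conceptual. Two small points to tighten: for generalised quaternion groups $\ancflav(G)=1$, so the character field is specifically $\E_{2^j}^+K$ (not $\E_{2^j}^\pm$), and you should state separately the degenerate case $\zeta_4 \in F$, where $F = \E_{2^j}K$, the Schur index is trivially~$1$, and $(x,y)=(\zeta_4,0)$ solves $x^2+y^2=-1$---this also covers any worry about whether $\alpha^2-4$ might already be a square in $F$.
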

\begin{proof}
  Since $\zeta_{2^j}$ is chosen arbitrarily among the primitive
  $2^j$th roots of unity, we may assume that $k = 1$.
  Write $\theta_i = \zeta_{2^i}^{\phantom 1} + \zeta_{2^i}^{-1}$.
  The corresponding statements for the equation
  $x^2 + \theta_{j}xy + y^2 = -1$ over $K(\chi_k^G) =
  K(\theta_j)$ follow from \cite[Prop.\ 10.1.17(ii)--(iii)]{LGM02}.
  It suffices to show that $a_i = \begin{mimat}1
    & \theta_{i}/2 \\ \theta_{i}/2 & 1 \end{mimat}$ is congruent to
  the $2\times 2$ identity matrix over $\Q(\theta_i)$ for $i\ge 2$.
  We may assume that $\zeta_{2^{i+1}}^2 = \zeta_{2^i}^{\phantom 1}$ for $i\ge 0$
  so that $\theta_i^2 = 2 + \theta_{i-1}$ for $i\ge 1$.
  Hence,
  $(2 + \theta_i) (2-\theta_i) = 4 - \theta_i^2 = 2 - \theta_{i-1}$.
  Let $\lambda_3 = \theta_3$
  and $\lambda_i = \lambda_{i-1}/ \theta_i \in \Q(\theta_i)$ 
  ($i\ge 4$).
  By induction, $\lambda_i^2 = 2 - \theta_{i-1}$ for $i\ge 3$;
  indeed $\lambda_i^2 = \lambda_{i-1}^2/\theta_i^2 = (2 -
  \theta_{i-2})/(2+\theta_{i-1}) = 2 - \theta_{i-1}$ for $i\ge 4$.
  We obtain $x_i a_i x_i^T = 1$, where $x_2 = 1$
  and $x_i = \begin{mimat} 1 & 0 \\ \theta_i/\lambda_i & -2/\lambda_i
  \end{mimat}$ ($i\ge 3$).
\end{proof}

Let $G = \langle a, g\rangle$ and $k$ be as in Proposition~\ref{prop:ord_char_anc}.
We now construct the faithful irreducible
$K$-representations of $G$ (up to equivalence). 
Let $\chi_k$ be the character of
$\varrho_k^{\phantom G} \!\!:= \varrho_k^G$ and $Z := K(\chi_k)$;
recall that $K(\chi_k) = \E^\pm_{2^j}K$ does not depend on $k$.
Define $L = \E_{2^j} K$ and $\Delta = \Gal(L/Z)$.
Since $\E_{2^j}^\pm (\zeta_4) = \E_{2^j}^{\phantom +}$,
if $\zeta_4 \in Z$, then $L = Z$ (so that $\Schur_K(\chi_k) = 1$) and
$\varrho_k$ can be regarded as an irreducible $K$-representation (see~\S\ref{s:background}).
Let $\zeta_4 \not\in Z$.
Then $L = Z(\zeta_4)$ is a quadratic extension of $Z$ and
\begin{align*}
 \psi\colon L \to \Mat_2(Z), 
 \mapspacing
 \alpha + \zeta_4\dtimes \beta \mapsto \begin{mat} \phantom{+}\alpha & \beta \\ -\beta &
   \alpha \end{mat} & & (\alpha,\beta\in Z)
\end{align*}
is equivalent to the regular representation of $L$ as a
$Z$-algebra.
Hence, $\trace_Z(u\psi) = \trace_{L/Z}(u)$ for $u\in L$.
Our use of $\psi$ in the following is similar to and inspired by arguments
in~\cite{Kon87}.
Note that the space of matrices of the form $\begin{mimat}\alpha &
  \phantom{+}\beta \\ \beta & -\alpha\end{mimat}$ ($\alpha,\beta\in Z$) is the
orthogonal complement of $L\psi$ with respect to the trace bilinear form
$(s,t)\mapsto \trace_Z(st)$ on $\Mat_2(Z)$.
We conclude that if $\anctype(G) = 1$, then
$G \to \GL_2(Z)$
given by $a \mapsto \zeta_{2^j}^k \psi$ and
$g \mapsto \diag(1,-1)$
affords $\chi_k$;
its restriction of scalars to $K$ is then irreducible.

Let $\anctype(G) = -1$ and suppose that there exist $x, y \in Z$ such
that $x^2 + y^2 = -1$; by Lemma~\ref{lem:schur_quat} the
latter condition is equivalent to $\Schur_K(\chi_k) = 1$.
We assume that $(x,y)$ has been chosen independently of $k$.
Let $t = \begin{mimat}x & \phantom{+}y \\ y &
  -x\end{mimat}$ and let $\gamma\in \Delta$ be defined by
$(\alpha + \zeta_4 \dtimes \beta)^\gamma= \alpha -
\zeta_4\dtimes \beta$ for $\alpha,\beta\in Z$.
Then  $(a^\gamma)\psi =
t^{-1}(a\psi)t$ for all $a\in L$ and $t^2 = -1$.
We conclude that $G \to \GL_2(Z)$ defined
by $a \mapsto (\zeta_{2^j}^k)\psi$ and
$g \mapsto t$
affords $\chi_k$ and remains irreducible after restriction of scalars to $K$.
Finally, if $\zeta_4\not\in Z$ and $\anctype(G) = -1$ but
$\Schur_K(\chi_k) = 2$, then the restriction of scalars of $\varrho_k$ to $K$
is irreducible since it affords the $K$-character $2 \sum_{\sigma\in \Gamma} \chi_k^\sigma$ (where
$\Gamma = \Gal(K(\chi_k)/K)$).

Since (faithful) irreducible $K$-representations of a finite group
correspond 1--1 to \mbox{Galois} orbits of (faithful) irreducible $\bar
K$-representations, up to equivalence,  
we have thus exhausted all faithful irreducible $K$-representations of the
non-abelian \ANC{} $2$-group $G$.

\subsection{Proofs of Theorem~\ref{thm:irranc} and Proposition~\ref{prop:count}}

\begin{proof}[Proof of Theorem~\ref{thm:irranc}]
Using \S\ref{s:anc_cyclic}, we may assume that $G$ in Theorem~\ref{thm:irranc} is non-cyclic.
Suppose that $G$ is a non-cyclic \ANC{} $2$-group.
Let $\sigma_k$ be the irreducible $K$-representation of $G$ derived
from $\rho_k$ and $\chi_k$ in \S\ref{s:anc2}.
The existence statement in Theorem~\ref{thm:irranc} is clear at this point.
By construction, the image of $\sigma_k$ remains unchanged as $k$ varies among
odd numbers. Since any faithful irreducible $K$-representation of $G$ is
equivalent to $\sigma_k$ for some odd $k$, the uniqueness statement in
Theorem~\ref{thm:irranc} follows for \ANC{} $2$-groups. 

Now let $G$ be an arbitrary non-abelian \ANC{} group.
Write $m = \card{G_{2'}}$ and let $G_2 \cong H \le \GL(W)$, where $W$
is an $\E_m K$-space and $H$ is irreducible.
Then $G \cong \tilde{G} := \langle H, \zeta_m \dtimes 1_W\rangle$ and 
$\tilde G$ is irreducible over $K$.

For the final uniqueness statement, let $G\le \GL(V)$ and $H\le \GL(W)$ be
irreducible non-abelian \ANC{} groups over $K$ such that $G \cong H$ as abstract
groups. 
Using Lemma~\ref{lem:cyclo_unique}, we find  $a\in G_{2'} \le \Zent(G)$ and
$b\in H_{2'} \le \Zent(H)$ of order $m := \card{G_{2'}} = \card{H_{2'}}$ such that
$a\mapsto b$ induces a $K$-isomorphism $K[a] \xrightarrow{\phi} K[b]$.
We may then regard both $G$ and $H$ as $Z$-linear groups,
where $Z := K[a]$ acts on $W$ via $\phi$. We see that $G_2$ and $H_2$ are
isomorphic irreducible $Z$-linear \ANC{} 2-groups.
By using what we have proved above with $Z$ in place of $K$,
we see that there exists a $Z$-isomorphism $V\xrightarrow{t} W$ with $t^{-1} G_2 t = H_2$.
In particular, $\idx{V:K[a]} = \idx{W:K[b]}$. Since $a$ and $b$ have
the same (irreducible) minimal polynomial over $K$, we obtain
$s^{-1} a s = b$ for some $K$-isomorphism $V\xrightarrow{s}W$.
Now replace $G$ by $s^{-1} G s$.
Repeating the above steps with $V = W$, $G_{2'} = H_{2'}$, $a = b$, and
$\phi = 1$, we obtain $t^{-1} G_2 t = H_2$. Since $t^{-1} a t = b = a$ by
$Z$-linearity of $t$, we conclude that $t^{-1} G t = H$.
\end{proof}

\begin{proof}[Proof of Proposition~\ref{prop:count}]
Let $\psi(n) = \idx{\E_n K : K}$.
As shown in the proof of \cite[Lem.~5.4]{irrednil},
there exists $C > 0$ such that
$\psi(n) \le n \le C \dtimes \psi(n)^{1+\varepsilon}$ for all $n \in \N$.
The conjugacy classes of irreducible finite cyclic subgroups of
$\GL_d(K)$ correspond precisely (via $n \mapsto \Cyc n(K)$) to the solutions
$n\in \N$ of $\psi(n) = d$ and for such a solution, $n \le C d^{1+\varepsilon}$.
Let $G\le \GL_d(K)$ be a non-abelian irreducible \ANC{} group of order $2n$.
Given $n$, there are at most $3$~different isomorphism classes
of such groups and therefore at most that many conjugacy classes of irreducible
realisations of these groups in~$\GL_d(K)$.
Given $G$ and $n$, the above constructions of irreducible \ANC{}
groups show that either $d = \psi(n)$ or $d = 2\psi(n)$.
By the above estimate, the number of solutions $n \in \N$ 
of either equation is $\mathcal O(d^{1+\varepsilon})$.
\end{proof}

\section{Towards a characterisation of primitivity}
\label{s:prechar}

Let $K\subset \bar\Q$ be a subfield.
We can characterise primitivity of cyclic $K$-linear groups in terms of
degrees of relative cyclotomic extensions.
Recall from \S\ref{s:background} that $\E_n = \Q(\zeta_n)$ denotes the $n$th
cyclotomic field with distinguished subfields $\E_n^\pm \subset \E_n^{\phantom+}$.
\begin{lemma}
  \label{lem:pre_prim_cyclic}
  $\Cyc n(K)$ is primitive if and only if $\idx{\E_n K : \E_{n/p}K} \not= p$ for
  each prime $\divides p n$.
\end{lemma}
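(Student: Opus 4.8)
The plan is to reduce primitivity of the cyclic group $\Cyc n(K) = \langle \zeta_n \rangle \le \GL_1(\E_nK)$, realised as a $K$-linear group of degree $\idx{\E_nK:K}$, to a statement about the tower of fields $\E_{n/p}K \subset \E_nK$ as $p$ ranges over the primes dividing $n$. The guiding principle is the standard correspondence between block systems for an irreducible $K$-linear group $G$ and intermediate algebras between $K$ and the commutant; for an abelian homogeneous group $A \le \GL(V)$ with $K[A]$ a field (see \S\ref{s:background}), imprimitivity decompositions $V = U_1 \oplus \dotsb \oplus U_r$ correspond to certain subfields of $K[A]$. Concretely, I would argue that a system of imprimitivity for $\Cyc n(K)$ corresponds to a subfield $F$ with $K \subset F \subsetneq \E_nK$ such that $\E_nK$ is generated over $F$ by a single block, and that the group induced on the set of blocks is a quotient of $\langle \zeta_n\rangle$, hence cyclic; minimality lets one reduce to the case where $\idx{\E_nK:F}$ is prime, say equal to $p$, and where $F$ contains $\zeta_{n/p'}$ for the $p$-part contribution — the upshot being that $\Cyc n(K)$ is imprimitive iff there is a prime $p \mid n$ with $\E_{n/p}K \subsetneq \E_nK$ of degree exactly $p$ and the block-permutation action realised through $\zeta_n \mapsto$ its image in $\langle \zeta_n\rangle/\langle \zeta_n^{n/p}\rangle$.

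First I would make precise the forward direction (imprimitive $\Rightarrow$ some $p$ with degree $p$). Given a nontrivial block system with $r$ blocks, $r > 1$, the stabiliser $H$ of a block is a subgroup of index $r$ in the cyclic group $\Cyc n(K)$, so $r \mid n$; picking a prime $p \mid r$ and coarsening the block system, one may assume $r = p$. The block $U_1$ is then an $H$-invariant subspace on which $H = \langle \zeta_n^p\rangle \cong \langle \zeta_{n/p}\rangle$ acts, so $U_1$ is a module over $K[\zeta_n^p] = \E_{n/p}K$, forcing $\idx{\E_nK : \E_{n/p}K} = p$ by counting dimensions ($\idx{\E_nK:K} = p \cdot \idx{U_1:K} = p\cdot\idx{\E_{n/p}K:K}$). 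Conversely, if $\idx{\E_nK:\E_{n/p}K} = p$ for some prime $p\mid n$, I would exhibit an explicit block system: view $\E_nK$ as a $2$-dimensional — rather, $p$-dimensional — vector space over $F := \E_{n/p}K$, pick an $F$-basis, and check that the cyclic group permutes the corresponding coordinate lines, using that $\zeta_n$ normalises the $F$-algebra structure and induces a permutation of order dividing $p$ on the $p$ one-dimensional $F$-subspaces of a suitable decomposition. The cleanest way is probably to note $\E_nK = \E_{n/p}K \otimes_{\E_{n/p}} \E_n$ when the degree drops to $p$, reducing to the classical fact that $\langle \zeta_n\rangle$ acting on $\E_n$ over $\E_{n/p}$ of degree $p$ is imprimitive (it is monomial: the regular-like representation of $\Cyc p$).

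The main obstacle I anticipate is the converse direction — pinning down, in the degenerate cases, exactly why the degree condition $\idx{\E_nK:\E_{n/p}K} = p$ (as opposed to $1$) is the right dividing line, and handling the interaction between the $p$-part and $p'$-part of $n$ when $p = 2$ and $K$ already contains various roots of unity (so that $\E_nK$ may equal $\E_{n/p}K$, i.e. the degree is $1$, in which case no block system arises from that prime). I would dispatch this by observing that $\idx{\E_nK:\E_{n/p}K} \in \{1, p\}$ always (since $\E_n/\E_{n/p}$ has degree dividing $p$, and composing with $K$ can only shrink it), so the hypothesis "$\ne p$" is literally "$= 1$", i.e. $\zeta_n \in \E_{n/p}K$; the lemma then says $\Cyc n(K)$ is primitive iff for every prime $p \mid n$ we have $\zeta_n \in \E_{n/p}K$. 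With that reformulation the forward direction is immediate (any block system forces degree $p$ for some $p$, contradicting the hypothesis), and the converse requires showing that if some $p$ has $\idx{\E_nK:\E_{n/p}K}=p$ then a genuine proper block decomposition exists, which is the monomial-representation construction sketched above; I expect this to go through cleanly once the field-degree bookkeeping of \S\ref{s:background} (the identities $\E_n\E_m = \E_{\lcm(n,m)}$, $\E_n\cap\E_m = \E_{(n,m)}$) is invoked.
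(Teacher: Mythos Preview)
Your approach is correct and self-contained, whereas the paper's proof is a two-line appeal to \cite[Cor.~4.5, Prop.~5.1]{primnil}: those results already establish that $G = \Cyc n(K)$ is primitive if and only if $\idx{K[G]:K[H]} \neq p$ for every maximal subgroup $H < G$ of prime index $p$, after which one simply identifies the tower $K[G]/K[H]/K$ with $\E_nK/\E_{n/p}K/K$. You instead reprove this criterion from scratch in the cyclic case---coarsening an arbitrary block system to one of prime size, running the dimension count, and for the converse exhibiting the monomial decomposition $\E_nK = \bigoplus_{i=0}^{p-1} (\E_{n/p}K)\,\zeta_n^i$ on which multiplication by $\zeta_n$ cyclically permutes the summands. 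Your route makes the lemma independent of \cite{primnil}; the paper's is shorter but leans on machinery developed there. One correction worth flagging: your assertion that $\idx{\E_nK:\E_{n/p}K} \in \{1,p\}$ ``always'' is false---when $\nu_p(n) = 1$ one has $\idx{\E_n:\E_{n/p}} = p-1$, so the compositum degree can be any divisor of $p-1$---but this does not damage your actual argument, since your forward direction really shows that the degree is a multiple of $p$ (from $\idx{\E_{n/p}K:K}$ dividing $\dim_K U_1$) and also at most $p$ (as $\zeta_n$ satisfies $X^p - \zeta_{n/p}$ over $\E_{n/p}K$), hence exactly $p$, while the converse assumes degree $p$ outright. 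Just drop the reformulation ``$\neq p$ is literally $= 1$''.
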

\begin{proof}
  Let $G = \Cyc n(K)$.
  By~\cite[Cor.~4.5, Prop.~5.1]{primnil},
  $G$ is primitive if and only if $\idx{K[G]:K[H]}\not= p$
  for every maximal subgroup $H<G$ of prime index $p$.
  The claim follows since the towers $K[G]/K[H]/K$ and $\E_n K/\E_{n/p}K/K$
  are isomorphic.
\end{proof}

It is a well-known and simple fact (see~\cite[Lem.~4.3]{primnil}) that for a
linear group to be primitive it is necessary that every subgroup of index $2$ is
irreducible.
As a first step towards characterising primitivity of a non-abelian group $G(K)$ from
Theorem~\ref{thm:irranc}, we now consider irreducibility of its cyclic maximal
subgroups.

\begin{lemma}
  \label{lem:cyc_homg_irred}
  Let $G$ be a non-abelian \ANC{} group of order $2n$.
  Let $A\normal G(K)$ be a cyclic subgroup of index $2$.
  Let $\circ \!=\! +$ if $G_2$ is dihedral or generalised quaternion and
  let $\circ \!=\! -$ if $G_2$ is semidihedral.
  \begin{enumerate}
  \item
  \label{lem:cyc_homg_irred1}
  $A$ is homogeneous if and only if $\sqrt{-1} \not\in \E_n^\circ  K$.
  \item
  \label{lem:cyc_homg_irred2}
  Let $A$ be homogeneous.
  Then $A$ is irreducible if and only if $\anctype(G) = 1$ or
  $x^2 + y^2 = -1$ is soluble in $\E_n^+ K$.
  \end{enumerate}
\end{lemma}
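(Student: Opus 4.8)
The plan is to realise $G(K)$ concretely using the constructions of \S\ref{s:anc2} and \S\ref{s:anc_cyclic}, and then read off when the cyclic maximal subgroup $A$ is homogeneous and when it is irreducible. Write $n = 2^j m$ with $m$ odd and $m = \card{G_{2'}}$; the cyclic maximal subgroup $A$ of $G$ has order $n$, and by the proof of Theorem~\ref{thm:irranc} the group $G(K)$ is built as $\langle H, \zeta_m \dtimes 1_W\rangle$ where $H$ realises $G_2$ irreducibly over $\E_m K$. The key point is that $K[A]$ is exactly $Z'[a]$ where $a$ generates the cyclic part $\langle a\rangle$ of $G_2$ of order $2^j$ and $Z' = \E_m K$ acts on the underlying space; equivalently, $K[A]$ is a quotient of $K[X]/(X^n-1)$, and since $a$ has order $2^j$ and the odd part contributes the full cyclotomic field $\E_m K$, we need to understand the algebra generated by the diagonal matrix $\diag(\zeta_{2^j}^k, \ancflav(G)\zeta_{2^j}^{-k})$ over $\E_m K$ — or, after restriction of scalars through the $\psi$-construction, over $\E_{2^j}^\pm K$ when $\Schur_K(\chi_k) = 1$, and over $\E_{2^j}^\pm K$ with a further quadratic twist when the Schur index is $2$.

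First I would treat part~(\ref{lem:cyc_homg_irred1}). The element $a$ acts (in the $\bar K$-representation $\varrho_k^G$ of Proposition~\ref{prop:ord_char_anc}) by $\diag(\zeta_{2^j}^k, \ancflav(G)\zeta_{2^j}^{-k})$, so the commutative algebra it generates over $\E_m K$ splits as a product of fields indexed by the distinct eigenvalues. Thus $K[A]$ is a field — i.e. $A$ is homogeneous — precisely when the two eigenvalue-characters of $A$ are Galois-conjugate over $\E_m K$, equivalently when $\zeta_{2^j}^k$ and $\ancflav(G)\zeta_{2^j}^{-k}$ generate the same field over $\E_m K$ and there is a $\Gal$-automorphism swapping them. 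The field generated by both is $\E_{2^j} \E_m K = \E_n K$; the subfield fixed by the involution $\zeta_{2^j}\mapsto \ancflav(G)\zeta_{2^j}^{-1}$ is $\E_{2^j}^\pm \E_m K = \E_n^\circ K$ with $\circ$ exactly as in the statement (since $\ancflav(G)=1$ for dihedral/quaternion, giving $+$, and $\ancflav(G)=-1$ for semidihedral, giving $-$). So $A$ is homogeneous iff $[\E_n K : \E_n^\circ K] = 2$, i.e. iff $\zeta_4 = \sqrt{-1} \notin \E_n^\circ K$ — using that $\E_{2^j}^\pm(\zeta_4) = \E_{2^j}$ from \S\ref{s:background}, so adjoining $\sqrt{-1}$ to $\E_n^\circ K$ always yields $\E_n K$ unless it was there already. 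That gives the equivalence in~(\ref{lem:cyc_homg_irred1}).

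For part~(\ref{lem:cyc_homg_irred2}), assume $A$ is homogeneous, so $K[A] \cong \E_n^\circ K$ with $[\E_n K : \E_n^\circ K] = 2$. Now $A$ is \emph{irreducible} over $K$ iff $K[A] = K[G(K)]$ acting irreducibly forces $K[A]$ to be the whole endomorphism algebra of the (irreducible) $A$-submodule structure — more precisely, $A$ is irreducible iff $\dim_{K[A]} V = 1$, i.e. iff the degree of $G(K)$ over $K$ equals $[\E_n^\circ K : K]$. In the (semi)dihedral case $\anctype(G) = 1$, Lemma~\ref{lem:schur_dih} gives $\Schur_K(\chi_k) = 1$, and the construction of $\sigma_k$ in \S\ref{s:anc2} produces $G(K) \le \GL_2(\E_n^\pm K) = \GL_2(\E_n^\circ K)$, so the degree over $K$ is $2[\E_n^\circ K:K] = [\E_n K : K]$; then $A$, viewed as scalars-times-$\psi$-image, has $K[A] = \E_n K \cong \E_n^\circ K$-vector-space of rank... — here I must be careful: the point is that in the (semi)dihedral realisation $a$ acts with $K[A]$ a field of degree $[\E_n K:K]$ over $K$ equal to the full degree, so $A$ is irreducible; this is where I'd lean on the explicit $a\mapsto \zeta_n^k\psi$ formula and the fact that $\psi$ is the regular representation of $\E_n K$ over $\E_n^\circ K$, making $K[A] = \E_n K$ act with no invariant subspace. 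In the generalised quaternion case $\anctype(G) = -1$: if $x^2+y^2=-1$ is soluble in $\E_n^+ K$ then by Lemma~\ref{lem:schur_quat} the Schur index is $1$ and the same $2\times 2$-over-$\E_n^+ K = \E_n^\circ K$ construction applies, so $A$ is irreducible exactly as before; if $x^2+y^2=-1$ is \emph{not} soluble in $\E_n^+ K = \E_n^\circ K$, the Schur index is $2$, the faithful irreducible $K$-representation is the restriction of scalars of $\varrho_k$ affording $2\sum_\sigma \chi_k^\sigma$, and the degree of $G(K)$ over $K$ is $2[\E_n K : K] = 4[\E_n^\circ K : K]$, which strictly exceeds $[\E_n^\circ K:K]$, so $A$ is \emph{not} irreducible. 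This is exactly the stated dichotomy.

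The main obstacle I anticipate is bookkeeping the three constructions from \S\ref{s:anc2} uniformly: matching $K[A]$ with $\E_n^\circ K$ in the homogeneous case requires knowing that the odd part $G_{2'}$ contributes $\E_m K$ "on the nose" (via Lemma~\ref{lem:cyclo_unique} and the $\langle H,\zeta_m\rangle$ construction), and that the $2$-part contributes $\E_{2^j}^\pm$ or $\E_{2^j}$ according to whether $\sqrt{-1}\in\E_n^\circ K$; and then, in the irreducibility step, carefully computing $\dim_{K[A]}V$ from the explicit degree of $G(K)$ in each of the three cases (Schur index $1$ via $\psi$; Schur index $1$ via the quaternion twist $t$; Schur index $2$ via restriction of scalars). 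Once the degrees are tabulated, $A$ is irreducible iff its degree over $K$ matches that of $G(K)$, and the arithmetic conditions fall out directly from Lemmas~\ref{lem:schur_dih} and~\ref{lem:schur_quat}.
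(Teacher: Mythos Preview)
Your approach is essentially the same as the paper's: realise $G(K)$ explicitly via \S\ref{s:anc2}--\S\ref{s:anc_cyclic}, compute $K[A]$ from the diagonal matrix $\diag(\zeta_{2^j}^k,\ancflav(G)\zeta_{2^j}^{-k})$ over $\E_m K$, and for part~(\ref{lem:cyc_homg_irred2}) compare the degree of $K[A]$ with the degree of $G(K)$ (which is governed by the Schur index). The paper carries this out by observing that $K[A]\cong_K (\E_n^\circ K)[a]$ and then checks irreducibility of the explicit quadratic minimal polynomial of~$a$ over $\E_n^\circ K$; your Galois-conjugacy formulation of~(\ref{lem:cyc_homg_irred1}) is equivalent.

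There is one genuine slip you should fix: when $A$ is homogeneous you write ``$K[A]\cong \E_n^\circ K$'' and then ``$A$ irreducible iff the degree of $G(K)$ equals $[\E_n^\circ K:K]$''. This is off by a factor of~$2$: in the homogeneous case $K[A]$ is the degree-$2$ extension of $\E_n^\circ K$, so $K[A]\cong \E_n K$ and the correct comparison is with $[\E_n K:K]$. You implicitly repair this later (``$K[A]$ a field of degree $[\E_n K:K]$ over $K$ equal to the full degree''), and your final conclusions in each of the three cases are correct because the Schur-index-$2$ degree $2[\E_n K:K]$ also exceeds $[\E_n K:K]$. But the argument as written is internally inconsistent, and the clean statement --- which is exactly how the paper phrases it --- is that the degree of $G(K)$ is $2^{\ell-1}[\E_n K:K]$ with $\ell$ the Schur index, so $A$ is irreducible iff $\ell=1$. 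This lets you avoid the case-by-case tabulation in your last paragraph.
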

\begin{proof}
  Write $n = 2^j m$ for odd $m$.
  Let $G(K)$ be constructed as in 
  the proof of Theorem~\ref{thm:irranc}.
  Hence, $K[A] \cong_K K[a,b]$, where
  $a = \diag(\zeta_{2^j}^{\phantom 1},\ancflav(G)
  \zeta_{2^j}^{-1}) \in \GL_2(\E_n K)$
  and $b = \diag(\zeta_m,\zeta_m) \in \GL_2(\E_n K)$;
  note that the $K$-isomorphism type of $K[a,b]$ does not depend on whether
  the faithful irreducible $\E_nK$-representation of $G_2$ used in the construction of $G(K)$
  is rewritten over $\E_n^\circ K$ (which
  amounts to conjugation by a suitable element of $\GL_2(\E_n K)$).
  In particular, $K[A] \cong_K (\E_n^\circ K)[a]$.
  The minimal polynomial of $a$ over $\E_n^\circ K$ is
  $X^2 - (\zeta_{2^j}^{\phantom 1} + \ancflav(G) \zeta_{2^j}^{-1})X +
  \ancflav(G)$.
  Thus, $K[A]$ is a field if and only if $\zeta_{2^j} \not\in \E_n^\circ
  K$ or, equivalently, $\E_n^{\phantom\circ} K \not= \E_n^\circ K$.
  As $\E_n^{\phantom\circ} = \E_n^\circ (\sqrt{-1})$, this is equivalent to
  $\sqrt{-1}\not\in \E_n^\circ K$ which proves (\ref{lem:cyc_homg_irred1}).
  Let $A$ be homogeneous.
  From the construction in \S\ref{s:anc_constr}, 
  we see that
  the degree of $G(K)$ is then $2^{\ell -1}\idx{\E_n K :K}$,
  where $\ell$ is the Schur index of the 
  representation of $G_2$ over $\E_m K$ used in the construction of
  $G(K)$. 
  Thus, $A$ is irreducible if and only if $\ell = 1$ which happens
  precisely under the given conditions by Lemmas~\ref{lem:schur_dih}--\ref{lem:schur_quat}.
\end{proof}

\begin{rem}
  Note that $A$ in Lemma~\ref{lem:cyc_homg_irred} is uniquely
  determined unless $G_2 \cong \Quat 8$ in which 
  case irreducibility of $A$ implies that of the other two cyclic subgroups of
  index $2$ of~$G$ (see also \cite[Lem.~8.1]{primnil}). 
\end{rem}

By a \emph{prime} of a number field $K$, we mean a non-zero prime ideal of its
ring of integers.
Let $\fp$ be a prime of $K$ and let $p$ be the underlying rational prime.
Then we let $K_{\fp}$ denote the $\fp$-adic completion of $K$; it is a finite
extension of the field $\Q_p$ of $p$-adic numbers.
The following variation of a result from \cite{primnil} characterises primitivity of $G(K)$.

\begin{prop}
  \label{prop:rawchar}
  Let $G$ be a non-abelian \ANC{} group of order $2n$, where $n = 2^jm$ and $m$
  is odd.
  Let $K\subset \bar\Q$ be a subfield.
  Suppose that a cyclic subgroup of index $2$ of $G(K)$ is irreducible.
  \begin{enumerate}
  \item
    \label{prop:rawchar1}
    Let $G_2$ be dihedral or semidihedral or let $\card{G_2} > 16$.
    Then $G(K)$ is primitive if and only if $\idx{\E_nK:\E_{n/p}K}\not= p$ for all
    primes $\divides p n$.
  \item
    Let $G_2\cong \Quat 8$.
    Then $G(K)$ is primitive if and only if $\idx{\E_nK:\E_{n/p}K}\not= p$ for all
    odd primes $\divides p n$ (that is, for all primes $\divides p m$).
  \item
    Let $G_2 \cong \Quat{16}$ and let $K$ be a number field.
    Then $G(K)$ is primitive if and only if the following two conditions are
    satisfied:
    \begin{inparaenum}[(a)]
    \item $\idx{\E_nK:\E_{n/p}K}\not= p$ for all odd primes $\divides p n$.
    \item
      If $\order(2\bmod m)\dtimes\idx{K_{\fp}:\Q_2}$ is even for all primes
      $\divides {\fp} 2$ of $K$, then $\idx{\E_nK:\E_{n/2}K} \not= 2$.
    \end{inparaenum}
  \end{enumerate}
\end{prop}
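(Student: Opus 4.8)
The plan is to deduce all three parts from the primitivity criterion of \cite[Cor.~4.5, Prop.~5.1]{primnil} used already in Lemma~\ref{lem:pre_prim_cyclic}, namely that $G(K)$ is primitive if and only if $\idx{K[G(K)]:K[H]} \neq p$ for every maximal subgroup $H < G(K)$ of prime index $p$. Since $G$ is nilpotent of order $2n$ with $n = 2^j m$, the maximal subgroups of $G$ are: the cyclic subgroup $A$ of index $2$ (and, when $G_2 \cong \Quat 8$, the other two cyclic subgroups of index $2$, which behave identically by the remark following Lemma~\ref{lem:cyc_homg_irred}), subgroups of index $2$ that are non-abelian \ANC{} $2$-groups times $G_{2'}$, and, for each odd prime $\divides p m$, a subgroup of index $p$ of the form $G_2 \times H_{2'}$ with $H_{2'} < G_{2'}$ of index $p$. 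First I would handle the odd primes: for $\divides p m$ the quotient tower $K[G(K)]/K[H]/K$ is isomorphic to $\E_n K / \E_{n/p} K / K$ (the $2$-part of $G$ contributes the same simple-algebra factor to both, and the enveloping algebra of the cyclic $p'$-part is the relevant cyclotomic field), so the condition $\idx{K[G(K)]:K[H]} \neq p$ becomes exactly $\idx{\E_n K : \E_{n/p} K} \neq p$. This accounts for condition (a) in all three parts and for the "odd primes" clauses.

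The substantive work is at the prime $2$, which is where the three cases diverge. Here one must examine two kinds of index-$2$ maximal subgroups $H$: the cyclic one $A$, and those $H$ with $H_2$ a proper \ANC{} $2$-subgroup of $G_2$ of index $2$. I would compute $\idx{K[G(K)]:K[H]}$ in each case using the explicit realisation of $G(K)$ from the proof of Theorem~\ref{thm:irranc} together with the Schur-index computations of Lemmas~\ref{lem:schur_dih}--\ref{lem:schur_quat}. For $G_2$ (semi)dihedral or $\card{G_2} > 16$: in all these cases the relevant index-$2$ overgroup situation reduces — via the character fields $\E_{2^j}^\pm$ being the same for $G_2$ and for its (semi)dihedral subgroups, and via $\anctype$ being inherited — to the cyclotomic tower again, so $\idx{K[G(K)]:K[H]} \neq 2$ iff $\idx{\E_n K : \E_{n/2} K} \neq 2$; this is where the hypothesis that a cyclic index-$2$ subgroup is already irreducible is used, to pin down that the degree of $G(K)$ is $2^{\ell-1}\idx{\E_n K:K}$ with $\ell$ as in Lemma~\ref{lem:cyc_homg_irred}. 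For $G_2 \cong \Quat 8$: the only index-$2$ maximal subgroups meeting the $2$-part are the three cyclic ones, which are irreducible by hypothesis, so \emph{no} obstruction arises at $p = 2$, giving part (ii).

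For $G_2 \cong \Quat{16}$ (part (iii)) the maximal subgroup $H$ with $H_2 \cong \Quat 8$ must be treated separately, and here the Schur index can genuinely drop from $G(K)$ to $H$: $\Schur_K$ of the faithful character of $G_2 \cong \Quat{16}$ over the relevant field may be $2$ while that of $\Quat 8$ over $\E_m K$ (or the appropriate compositum) is $1$, or vice versa. By Lemma~\ref{lem:schur_quat} and the local–global description of solubility of $x^2 + y^2 = -1$ (equivalently, whether $-1$ is a sum of two squares, which over a number field is governed by the real places being complex — automatic here since $\E_n$ is totally complex for $n$ with $2^j \geq 2$ and $\sqrt{-1}$ enters — and by the dyadic local conditions), the Schur index over $K$ of the $\Quat{16}$-character is $2$ precisely when $\order(2 \bmod m)\cdot\idx{K_\fp:\Q_2}$ is even for all $\divides{\fp}2$; this is the arithmetic content of condition (b). When that Schur index is $2$, passing to $H$ with $H_2 \cong \Quat 8$ (whose character has Schur index $1$ over the corresponding field) multiplies the degree by an extra factor of $2$ coming from the cyclotomic direction rather than the Schur index, so $\idx{K[G(K)]:K[H]} = 2$ exactly when additionally $\idx{\E_n K:\E_{n/2}K} = 2$; negating gives condition (b). I expect this dyadic bookkeeping — tracking exactly how the factor of $2$ migrates between the Schur-index part and the cyclotomic-degree part of $\idx{V:K}$ as one passes from $\Quat{16}$ down to $\Quat 8$, and translating "$x^2+y^2 = -1$ soluble in the relevant field" into the stated condition on $\order(2\bmod m)$ and the local degrees $\idx{K_\fp:\Q_2}$ — to be the main obstacle; everything else is the routine cyclotomic-tower comparison already established in Lemma~\ref{lem:pre_prim_cyclic}.
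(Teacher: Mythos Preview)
Your starting point is mis-stated in a way that makes much of the argument ill-formed. The criterion from \cite[Cor.~4.5, Prop.~5.1]{primnil}, as invoked in Lemma~\ref{lem:pre_prim_cyclic}, reads ``$\idx{K[G]:K[H]}\neq p$'' only because $G$ is \textit{cyclic} there, so that $K[G]$ is a field. For the non-abelian $G$ in Proposition~\ref{prop:rawchar}, $K[G(K)]$ is a matrix algebra over its centre, and the expression $\idx{K[G(K)]:K[H]}$ has no obvious meaning; your claim that ``the quotient tower $K[G(K)]/K[H]/K$ is isomorphic to $\E_nK/\E_{n/p}K/K$'' for odd $p$ is therefore not even well-posed. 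The paper's proof proceeds quite differently: it makes the single translation $\idx{K[A]:K[A^p]} = \idx{\E_nK:\E_{n/p}K}$ for the irreducible \textit{cyclic} subgroup $A\lhd G(K)$ of index~$2$ (where both sides are genuine field degrees), and then quotes \cite[\S 8.4, Cor.~7.4, Lem.~8.3--8.4]{primnil}, which already express primitivity of $G(K)$ in terms of the numbers $\idx{K[A]:K[A^p]}$ together with the extra arithmetic data appearing in parts~(ii)--(iii). No case analysis of maximal subgroups is redone here; that work lives in \cite{primnil}.

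Your high-level strategy---check irreducibility of each maximal subgroup of $G$---is in fact the strategy underlying the cited results of \cite{primnil}, so it is not wrong in spirit, but your execution of the $\Quat{16}$ case contains a concrete error. You assert that the Schur index of the $\Quat{16}$-character is $2$ precisely when $\ord(2\bmod m)\dtimes\idx{K_\fp:\Q_2}$ is even for all $\fp\mid 2$. This is backwards in two ways. First, under the standing hypothesis that $A$ is irreducible, Lemma~\ref{lem:cyc_homg_irred}(\ref{lem:cyc_homg_irred2}) already forces $x^2+y^2=-1$ to be soluble in $\E_n^+K$, so by Lemma~\ref{lem:schur_quat} the Schur index of the $\Quat{16}$-character over its character field is $1$, not $2$. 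Second, the stated dyadic condition (see Lemma~\ref{lem:translate}(\ref{lem:translate3})) governs solubility of $x^2+y^2=-1$ in $\E_mK$, i.e.\ the Schur index of the $\Quat 8$-character of the \textit{subgroup} $H$ with $H_2\cong\Quat 8$---and ``even'' there gives solubility, hence Schur index $1$. The dichotomy in condition~(b) is thus driven by the Schur index of $\Quat 8$ over $\E_mK$, not of $\Quat{16}$: when that index is $2$, the restriction to $H$ is automatically irreducible and the $p=2$ obstruction vanishes; when it is $1$, the obstruction $\idx{\E_nK:\E_{n/2}K}=2$ can occur. You have the two groups' roles interchanged and the parity implication reversed.
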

\begin{proof}
  If $A$ denotes a cyclic subgroup of index $2$ of $G$ as in \cite{primnil}
  and $\divides p n$, then $\idx{K[A]:K[A^p]} = \idx{\E_nK:\E_{n/p}K}$.
  All claims now follow
  from \cite[\S 8.4]{primnil} and \cite[Cor.~7.4, Lem.~8.3--8.4]{primnil} or, equivalently,
  by using \cite[Alg.~9.1]{primnil} to test primitivity of $G(K)$.
\end{proof}

\begin{rem}
  The author would like to use this opportunity to correct a mistake
  in~\cite{primnil}.
  It is claimed in \cite[\S 8]{primnil} that a maximal subgroup $H$ of a
  non-abelian \ANC{} group $G$ is itself an \ANC{} group.
  This is not correct:  since $\Dih 8$ is a maximal subgroup of $\Dih{16}$ and
  $\SDih{16}$, the group $H$ might also be of the form $\Dih 8 \times \Cyc m$ for odd $m\in \N$.
  Subsequent arguments in \cite[\S 8]{primnil} then apply results from
  \cite[\S 7]{primnil} which are stated for non-abelian \ANC{} groups only.
  Apart from the incorrect assertion that $H$ is necessarily an \ANC{} group,
  this reasoning is sound since
  all results in \cite[\S 7]{primnil} remain valid verbatim if, in addition to non-abelian
  \ANC{} groups, we also allow groups of the form $G = \Dih 8 \times \Cyc m$ for odd
  $m \in \N$ and if we also define $\anctype(G) = \ancflav(G) = 1$, extending
  the definitions from \S\ref{s:anc2}.
\end{rem}

For fixed $G$ and $K$, 
Lemma~\ref{lem:cyc_homg_irred} and Proposition~\ref{prop:rawchar} together allow
us to decide primitivity of $G(K)$.
In the following, let $K$ be fixed.
Then, if we test conditions such as ``$\sqrt{-1} \in \E_n^{\pm}
K$'' or ``$\idx{\E_nK:\E_{n/p}K} = p$'' on a case-by-case basis,
it remains unclear precisely for which \ANC{} groups $G$, the linear group $G(K)$ is
primitive.
In particular, we cannot yet answer questions of the following type:
is $(\Dih{16}\times\Cyc m)(K)$ primitive for any odd $m\in \N$?
A global picture of all the primitive groups $G(K)$ for fixed $K$
which allows us to answer such questions will be provided by
Theorem~\ref{thm:char}.
To that end, in~\S\ref{s:relcyclo}, we will rephrase the conditions
``$\idx{\E_n K:\E_{n/p}K} = p$'' and ``$\sqrt{-1} \in \E_n^\pm K$'' in terms of
invariants $\cmet^{\phantom+}_K$ and $\cmet_K^\pm$ of $K$ which we introduce.

\section{Relative cyclotomic extensions}
\label{s:relcyclo}

Throughout, let $K\subset \bar\Q$ be a subfield.
For $\circ \in \{+, -, \phantom{*} \}$,
let $$\fD_K^\circ(n) = \bigl\{ d \in \N : K \cap \E_n^{\phantom \circ} \subset
\E_d^\circ\bigr\}.$$
Define a function
\[
\cmet_K^\circ\colon \N\to\N\cup\{0\},
\quad n\mapsto \gcd(\fD_K^\circ(n)),
\]
where we set $\gcd(\emptyset) = 0$.
Note that $n\in \fD_K(n)$ so that $\divides{\cmet_K(n)}n$; in contrast,
$\cmet_K^{\pm}(n) = 0$ is possible.
This section is devoted to the study of the numerical invariants $\cmet_K^{\circ}$
of $K$.
These invariants are related to primitivity of the groups $G(K)$ in
Theorem~\ref{thm:irranc} via \S\ref{s:prechar} and the following two lemmas,
to be proved in \S\ref{ss:cyclo_proofs}.

\begin{lemma}
  \label{lem:En_Enp}
  Let $n\in \N$ and let $\divides p n$ be prime.
  Then $\idx{ \E_n K : \E_{n/p} K } = p$ if and only if
  $\divides {p^2} n$ and
  $\divides p {\frac{n}{\cmet_K(n)}}$.
\end{lemma}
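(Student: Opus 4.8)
The plan is to analyze the Galois group $\Gal(\E_n\E_{n/p}\cdots/\mathbf{Q})$ relative to $K$ by tracking the degree $\idx{\E_n K : \E_{n/p} K}$ as a function of how $\E_n$ ramifies over $\E_{n/p}$ and how much of $\E_n$ already lives inside $K$. First I would dispose of the case $\ndivides{p^2}{n}$: if $p \| n$ exactly, then either $p$ is odd and $\E_{n/p} = \E_{n}$ when $p=2$-free considerations force $\E_n = \E_{n/p}$ for odd $p$ with $p\|n$ (since $\varphi(p) = p-1$ and the extension $\E_n/\E_{n/p}$ has degree dividing $p-1$, hence $\neq p$), or $p = 2$ and $n/2$ is odd, in which case $\E_n = \E_{n/2}$ outright. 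In all these subcases $\idx{\E_n K:\E_{n/p}K} \neq p$ trivially, and simultaneously the right-hand condition fails because $\ndivides{p^2}{n}$. So we may assume $\divides{p^2}{n}$, where $\idx{\E_n : \E_{n/p}} = p$ and $\E_n/\E_{n/p}$ is (totally, tamely or wildly) ramified at $p$ of degree exactly $p$.

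The key step is then: under $\divides{p^2}{n}$, we have $\idx{\E_n K : \E_{n/p}K} \in \{1, p\}$, and it equals $1$ precisely when $\E_n \subset \E_{n/p} K$, i.e.\ when $K$ "already supplies" the missing degree-$p$ piece. I would recast this: $\idx{\E_n K:\E_{n/p}K} = p$ iff $\E_n \not\subset \E_{n/p}K$ iff $K \cap \E_n \not\subset \E_{n/p}$ (using that $\E_n/\E_{n/p}$ is cyclic of prime degree $p$, so $\E_n \subset \E_{n/p}K \iff K\cap\E_n \not\subset \E_{n/p}$ by the standard Galois correspondence lemma for linearly disjoint-type arguments — the intermediate field $\E_{n/p}(K\cap\E_n)$ inside $\E_n$ is either $\E_{n/p}$ or $\E_n$). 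Now $K \cap \E_n \subset \E_{n/p}$ is, by definition of $\fD_K(n)$ (taking $\circ$ empty), equivalent to $n/p \in \fD_K(n)$ — here I use $\E_{n/p} = \E_{n/p}^{\phantom\circ}$ and that $\E_d \subset \E_{d'} \iff \divides{d}{d'}$ for the cyclotomic fields, so $K\cap\E_n \subset \E_{n/p}$ iff $K\cap\E_n \subset \E_d$ for $d = \gcd(n/p, \text{anything}) $; more precisely $n/p \in \fD_K(n)$. So $\idx{\E_nK:\E_{n/p}K} = p \iff n/p \notin \fD_K(n)$.

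The remaining step translates $n/p \notin \fD_K(n)$ into the divisibility statement $\divides{p}{\frac{n}{\cmet_K(n)}}$. Since $\cmet_K(n) = \gcd(\fD_K(n))$ and $\fD_K(n)$ is, I claim, closed under gcd and is "divisor-closed upward" in the sense that if $d \in \fD_K(n)$ and $\divides{d}{d'}$ with $\divides{d'}{n}$ then $d' \in \fD_K(n)$ (because $\E_d \subset \E_{d'}$), the set $\{d : \divides{d}{n}\} \cap \fD_K(n)$ is exactly the set of multiples of $\cmet_K(n)$ dividing $n$. Hence $n/p \in \fD_K(n) \iff \divides{\cmet_K(n)}{n/p} \iff \divides{p}{\frac{n}{\cmet_K(n)}}$ is \emph{false}... wait, I must be careful with the direction: $\divides{\cmet_K(n)}{(n/p)} \iff \nu_q(\cmet_K(n)) \le \nu_q(n/p)$ for all $q$, which for $q \ne p$ is automatic from $\divides{\cmet_K(n)}{n}$, and for $q = p$ says $\nu_p(\cmet_K(n)) \le \nu_p(n) - 1$, i.e.\ $\divides{p}{\frac{n}{\cmet_K(n)}}$ where the latter means $\nu_p(n) > \nu_p(\cmet_K(n))$. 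So $n/p \in \fD_K(n) \iff \divides{p}{\frac{n}{\cmet_K(n)}}$, and therefore $\idx{\E_nK:\E_{n/p}K} = p \iff n/p \notin \fD_K(n) \iff \ndivides{p}{\frac{n}{\cmet_K(n)}}$ — hmm, this is the negation of what the lemma asserts, so I must double-check the quantifier: the lemma says $\idx{}=p$ iff ($\divides{p^2}{n}$ \textbf{and} $\divides{p}{\frac{n}{\cmet_K(n)}}$), so I should re-examine whether $\idx{\E_nK:\E_{n/p}K} = p$ corresponds to $K$ \emph{not} helping, i.e.\ $K\cap\E_n \subset \E_{n/p}$, i.e.\ $n/p \in \fD_K(n)$, i.e.\ $\divides{p}{\frac{n}{\cmet_K(n)}}$ — yes: if $K\cap\E_n \subset \E_{n/p}$ then $K$ contributes nothing and the full degree $p$ survives. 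I had the implication backwards a moment ago; the correct chain is $\idx{\E_nK:\E_{n/p}K}=p \iff \E_n\cap(\E_{n/p}K) = \E_{n/p} \iff K\cap\E_n\subset\E_{n/p} \iff n/p\in\fD_K(n) \iff \divides{p}{\frac{n}{\cmet_K(n)}}$.

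\textbf{Main obstacle.} The delicate point is the Galois-theoretic reduction $\E_n \subset \E_{n/p}K \iff K\cap\E_n\subset\E_{n/p}$, which needs $\E_n/\E_{n/p}$ to have prime degree so that there is no intermediate field; when $\ndivides{p^2}{n}$ this degree is not $p$ (it is $1$ or divides $p-1$), which is exactly why that case must be handled separately and why the hypothesis $\divides{p^2}{n}$ appears. A secondary subtlety is verifying that $\fD_K(n)$ restricted to divisors of $n$ is precisely the set of multiples of $\cmet_K(n)$ dividing $n$ — this follows from $\E_d\subset\E_{d'}\iff\divides d{d'}$ together with the elementary number-theoretic fact that a gcd-closed, upward-(divisor-)closed subset of the divisor lattice of $n$ is the principal up-set of its minimum; I would state this cleanly and cite the cyclotomic identities from \S\ref{s:background}.
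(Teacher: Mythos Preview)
Your proposal is correct and follows essentially the same route as the paper: dispose of $\ndivides{p^2}{n}$ via $\idx{\E_n:\E_{n/p}}\le p-1$, then for $\divides{p^2}{n}$ reduce $\idx{\E_nK:\E_{n/p}K}=p$ to $K\cap\E_n\subset\E_{n/p}$ (the paper packages this Galois step as Lemma~\ref{lem:cyclo_crit} and Corollary~\ref{cor:cyclo_res}(\ref{cor:cyclo_res1})), and finally translate this into $\divides{\cmet_K(n)}{(n/p)}$ via the structure $\fD_K(n)=\cmet_K(n)\dtimes\N$ (the paper's Proposition~\ref{prop:Dpm}(\ref{prop:Dpm1})). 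Your momentary sign-flip self-corrects to the right chain, so after pruning the false starts this is exactly the paper's argument, just with the auxiliary lemmas unpacked inline.
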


Note that for $K = \Q$, Lemma~\ref{lem:En_Enp} simply asserts that
$p = {\frac{\varphi(n)}{\varphi(n/p)}}$ if and only if $\divides{p^2} n$.
\begin{lemma}
  \label{lem:sqrt_En}
  Let $n\in \N$ with $\divides 4 n$. Then:
  \begin{enumerate}
  \item
  \label{lem:sqrt_En1}
    $\sqrt{-1} \not\in \E_n^+ K$ if and only if $\cmet^+_K(n) \not= 0$.
  \item
  \label{lem:sqrt_En2}
    $\sqrt{-1} \not\in \E_n^- K$ if and only if 
  (\textlabel{a}{lem:sqrt_En2a})
  $\divides {2\cmet^+_K(n)} {n}$ or
  (\textlabel{b}{lem:sqrt_En2b})
  $\divides{\cmet^-_K(n)} n$ and $\ndivides {2\cmet^-_K(n)} {n}$.
\end{enumerate}
\end{lemma}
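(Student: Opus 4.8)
The plan is to translate everything into statements about which cyclotomic fields $\E_d^\circ$ contain $K\cap\E_n$, using the definitions of $\fD_K^\circ(n)$ and $\cmet_K^\circ(n)$, together with the two basic facts $\E_a\cap\E_b=\E_{(a,b)}$, $\E_a^\circ\E_b^\circ=\E_{ab}^\circ$ for $(a,b)=1$, and the relation $\E_n^{\phantom\circ}=\E_n^\circ(\sqrt{-1})$ when $4\mid n$. First I would record two preliminary observations. (1) Since $\divides 4 n$, we have $\sqrt{-1}\in\E_n$, hence $\sqrt{-1}\in\E_n^\circ K$ if and only if $\E_n^\circ K=\E_n^{\phantom\circ}K$, i.e.\ $\E_n^{\phantom\circ}K$ is \emph{not} a proper extension of $\E_n^\circ K$; and since $\E_n^{\phantom\circ}=\E_n^\circ(\sqrt{-1})$ this proper extension is quadratic when it occurs. (2) The condition ``$K\cap\E_n\subset\E_d^\circ$'' is equivalent to ``$(K\cap\E_n)\E_d^\circ=\E_d^\circ$'', and since $K\cap\E_n\subset\E_n$, writing $d=2^{j'}m'$ one only cares about $d\mid$ some bounded modulus; in particular $\fD_K^\circ(n)$ is determined by $K\cap\E_n$ and $\cmet_K^\circ(n)=\gcd\fD_K^\circ(n)$ is the ``smallest'' level at which $K\cap\E_n$ sits inside the $\circ$-real subfield.

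For part~(\ref{lem:sqrt_En1}): I would argue that $\sqrt{-1}\notin\E_n^+K$ iff $\sqrt{-1}\notin\E_n^+(K\cap\E_n)$ (because $\E_n^+K$ and $\E_n^{\phantom+}$ are both contained in $\E_n^{\phantom+}K$, and $\sqrt{-1}\in\E_n$, so adjoining the rest of $K$ is irrelevant to whether $\sqrt{-1}$ lands in the composite), and this holds iff $K\cap\E_n\subset\E_n^+$, i.e.\ iff $n\in\fD_K^+(n)$, i.e.\ iff $\fD_K^+(n)\neq\emptyset$, i.e.\ iff $\cmet_K^+(n)\neq0$. The one thing to check carefully is the first ``iff'': that $\sqrt{-1}\in\E_n^+K$ forces $\sqrt{-1}\in\E_n^+(K\cap\E_n)$. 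Here I would use that $\E_n^+K\cap\E_n^{\phantom+}=\E_n^+(K\cap\E_n)$: the right side is clearly contained in the left; conversely an element of $\E_n^+K\cap\E_n^{\phantom+}$ lies in $\E_n^{\phantom+}$ and in $\E_n^+K$, and a Galois-theoretic argument over $\E_n^+$ (the extension $\E_n^{\phantom+}K/\E_n^+K$ being either trivial or quadratic, generated by $\sqrt{-1}$) pins it down. Then $\sqrt{-1}\in\E_n^+K\cap\E_n^{\phantom+}=\E_n^+(K\cap\E_n)$, and $\E_n^+(K\cap\E_n)$ contains $\sqrt{-1}$ iff it equals $\E_n^{\phantom+}$ iff $K\cap\E_n\not\subset\E_n^+$, as desired.

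For part~(\ref{lem:sqrt_En2}) I would proceed similarly but now track the $2$-part carefully, writing $n=2^jm$ with $j\ge2$ (since $4\mid n$) and $m$ odd. The point is that $\E_n^-=\E_{2^j}^-\E_m$ differs from $\E_n^+=\E_{2^j}^+\E_m$ only in the $2$-primary layer, and $\E_{2^j}^-(\sqrt{-1})=\E_{2^{j'}}^{\phantom-}$ for a suitable $j'$; so $\sqrt{-1}\in\E_n^-K$ iff $K\cap\E_n$ together with $\E_n^-$ already generates $\E_n^{\phantom-}$. Unwinding, $\sqrt{-1}\notin\E_n^-K$ iff $K\cap\E_n\subset\E_n^-$ but $K\cap\E_n\not\subset$ the field obtained from $\E_n^-$ by \emph{not} adjoining $\sqrt{-1}$; this splits into the two cases (a) and (b). Case (a) $\divides{2\cmet_K^+(n)}{n}$ should correspond to $K\cap\E_n$ sitting inside $\E_{n/2}^+$ hence inside a proper real subfield that does not reach $\sqrt{-1}$; case (b) $\divides{\cmet_K^-(n)}n$ and $\ndivides{2\cmet_K^-(n)}n$ should correspond to $K\cap\E_n$ sitting at ``full $2$-level'' inside $\E_n^-$ but genuinely using the $-$-flavour real field rather than the $+$-flavour one, which is precisely the configuration in which $\sqrt{-1}$ is still excluded. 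The bookkeeping here — relating the divisibility conditions on $\cmet_K^+(n)$ and $\cmet_K^-(n)$ to the precise $2$-adic position of $K\cap\E_n$ in the lattice of fields $\E_{2^i}^{\phantom\circ}$, $\E_{2^i}^+$, $\E_{2^i}^-$ — is the main obstacle; I expect it to hinge on a small explicit diagram of the $2$-primary cyclotomic fields and the observations $\E_{2^i}^+(\sqrt{-1})=\E_{2^i}^{\phantom+}$, $\E_{2^i}^-(\sqrt{-1})=\E_{2^{i+1}}^+$ (for $i\ge3$), deferred to \S\ref{ss:cyclo_proofs} as stated.
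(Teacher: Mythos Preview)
Your treatment of part~(\ref{lem:sqrt_En1}) is correct and essentially coincides with the paper's: both reduce to the equivalence ``$\sqrt{-1}\notin\E_n^\circ K$ iff $K\cap\E_n\subset\E_n^\circ$'', which is precisely the content of the paper's Lemma~\ref{lem:cyclo_crit} applied with $L=\E_n^\circ$. Your identity $\E_n^+K\cap\E_n=\E_n^+(K\cap\E_n)$ is exactly that lemma in disguise (Dedekind's modular law), and once you have $K\cap\E_n\subset\E_n^+$ iff $n\in\fD_K^+(n)$, the step to $\fD_K^+(n)\not=\emptyset$ needs only the observation that $\E_d^+\cap\E_n=\E_{(d,n)}^+$, which the paper records separately.

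Part~(\ref{lem:sqrt_En2}), however, has genuine gaps. First, a small confusion: the condition is simply $K\cap\E_n\subset\E_n^-$, i.e.\ $n\in\fD_K^-(n)$; your ``but $K\cap\E_n\not\subset\dotsc$'' clause is spurious. Second, the identity you propose to use, $\E_{2^i}^-(\sqrt{-1})=\E_{2^{i+1}}^+$, is false: since $\sqrt{-1}\in\E_{2^i}$ and $[\E_{2^i}:\E_{2^i}^-]=2$, one has $\E_{2^i}^-(\sqrt{-1})=\E_{2^i}$, which is not real, whereas $\E_{2^{i+1}}^+$ is.

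Third and most importantly, the substance of part~(\ref{lem:sqrt_En2}) is the translation of ``$n\in\fD_K^-(n)$'' into the stated divisibility conditions on $\cmet_K^\pm(n)$, and this is not just bookkeeping with a subfield diagram. It requires a structural description of $\fD_K^-(n)$: when $\fD_K^+(n)\not=\emptyset$ one has $\fD_K^-(n;0)=2\fD_K^+(n)$, and when $\fD_K^+(n)=\emptyset$ but $\fD_K^-(n)\not=\emptyset$ the set $\fD_K^-(n)$ consists of the odd multiples of $\cmet_K^-(n)$ (this is the paper's Proposition~\ref{prop:Dpm}, parts (\ref{prop:Dpm4})--(\ref{prop:Dpm5})). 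Applying this with $d=n$ yields the disjunction, but in the natural form condition~(b) carries the extra hypothesis $\cmet_K^+(n)=0$. The paper then has to argue separately that (b) together with $\cmet_K^+(n)\not=0$ already forces~(a), using the relation between $\cmet_K^+(n)$ and $\cmet_K^-(n)$ from Proposition~\ref{prop:Dpm}(\ref{prop:Dpm5}) and the hypothesis $\divides 4 n$. Your plan does not anticipate this step, and your informal descriptions of cases~(a) and~(b) do not pin down the structure of $\fD_K^-(n)$ needed to carry it out.
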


If $K$ is a number field,
then $K \cap \E_n$ is contained
in the maximal abelian subfield $K_{\ab}$ of $K$.
By the Kronecker-Weber theorem \cite[Thm~5.10]{Jan96}, there exists
$c \in \N$ with $K_{\ab} \subset \E_c$;
the smallest possible value of $c$ is precisely the (finite part of the)
\emph{conductor} of $K_{\ab}$.

\begin{prop}
  \label{prop:cyclo_periodic}
  Let $K$ be a number field and let $\circ \in \{+, -, \phantom{*} \}$.
  Let $\mathfrak f$ be the conductor of $K_{\ab}$.
  Then $\cmet_K^{\circ}(n) = \cmet^{\circ}_K(\gcd(n,\mathfrak f))$ for all $n \in \N$. 
\end{prop}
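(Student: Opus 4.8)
The plan is to reduce everything to the interaction between $K\cap\E_n^{\phantom\circ}$ and the field $\E_d^\circ$, exploiting the fact that $K\cap\E_n^{\phantom\circ}$ only ever ``sees'' the conductor. Set $\mathfrak f$ to be the conductor of $K_{\ab}$, so $K_{\ab}\subset\E_{\mathfrak f}$, and write $g = \gcd(n,\mathfrak f)$. The first step is the key containment $K\cap\E_n^{\phantom\circ} = K\cap\E_g^{\phantom\circ}$: since $K\cap\E_n^{\phantom\circ}\subset K_{\ab}\cap\E_n \subset \E_{\mathfrak f}\cap\E_n = \E_{\gcd(n,\mathfrak f)} = \E_g$ using the identity $\E_a\cap\E_b = \E_{(a,b)}$ from \S\ref{s:background}, and conversely $\E_g\subset\E_n$, we get $K\cap\E_n^{\phantom\circ} = K\cap\E_g$, which equals $K\cap\E_g^{\phantom\circ}$ because $g\mid\mathfrak f$ means this field is already inside $\E_g$. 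Thus the ``moving part'' of the definition of $\fD_K^\circ$, namely the field $K\cap\E_n^{\phantom\circ}$, depends on $n$ only through $g$.

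The second step is to compare the two sets $\fD_K^\circ(n)$ and $\fD_K^\circ(g)$ directly. By definition $\fD_K^\circ(n) = \{d\in\N : K\cap\E_n^{\phantom\circ}\subset\E_d^\circ\}$, and by the first step this equals $\{d\in\N : K\cap\E_g^{\phantom\circ}\subset\E_d^\circ\} = \fD_K^\circ(g)$. Hence $\cmet_K^\circ(n) = \gcd(\fD_K^\circ(n)) = \gcd(\fD_K^\circ(g)) = \cmet_K^\circ(g)$, which is exactly the claim. The convention $\gcd(\emptyset)=0$ causes no trouble since if one set is empty so is the other.

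The only point that needs a little care — and the place I expect the main (mild) obstacle — is the very first containment $K\cap\E_n\subset K_{\ab}$: one must argue that any subfield of $K$ that is abelian over $\Q$ lies in the maximal abelian subfield $K_{\ab}$ of $K$. This is immediate from the definition of $K_{\ab}$ as the compositum (inside $K$) of all subfields of $K$ abelian over $\Q$, together with the fact that a compositum of abelian extensions of $\Q$ is again abelian over $\Q$ (so $K_{\ab}$ is itself abelian over $\Q$ and genuinely maximal); $\E_n$ is abelian over $\Q$, so $K\cap\E_n$ is too. After that, everything is a formal manipulation with the cyclotomic intersection identity and the definitions. I would present the argument in essentially the three sentences above, spelling out the chain $K\cap\E_n^{\phantom\circ}\subset K_{\ab}\cap\E_n\subset\E_{\mathfrak f}\cap\E_n = \E_g$ and noting $g\mid\mathfrak f$ to conclude $K\cap\E_n^{\phantom\circ} = K\cap\E_g^{\phantom\circ}$, then reading off equality of the defining sets $\fD_K^\circ$ and hence of their gcds.
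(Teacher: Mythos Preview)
Your proposal is correct and follows exactly the paper's approach: the paper's entire proof is the single line $K \cap \E_n = K \cap \E_n \cap \E_{\mathfrak f} = K \cap \E_{(n,\mathfrak f)}$, which is precisely your chain $K\cap\E_n \subset K_{\ab}\cap\E_n \subset \E_{\mathfrak f}\cap\E_n = \E_g$ together with the trivial reverse inclusion. The extra care you take in justifying $K\cap\E_n\subset K_{\ab}$ is implicit in the paper's first equality and is exactly the right point to highlight.
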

\begin{proof}
  $K \cap \E_n = K \cap \E_n \cap \E_\mathfrak f = K \cap \E_{(n,\mathfrak f)}$.
\end{proof}

Since $\divides{\cmet_K(n)}n$ for all $n\in \N$,
for a number field $K$,
a finite computation thus suffices to determine $\cmet_K$ completely.
As we will see in Remark~\ref{rem:finite_object}, the same is true for $\cmet_K^{\pm }$.

\subsection{The sets $\fD_K^\circ(n)$}

In preparation of proving Lemmas~\ref{lem:En_Enp}--\ref{lem:sqrt_En}, we now
study the sets $\fD_K^\circ(n)$ and their relationships with
the $\cmet_K^\circ(n)$.
Let $\fD_K^\circ(n;i) = \{ d \in \fD_K^\circ(n) : d \equiv i \bmod 2\}$.
The following will be proved at the end of this subsection.

\begin{prop}
  \label{prop:Dpm}
  Let $K\subset \bar\Q$ be a subfield and let $n \in \N$.
  \begin{enumerate}
  \item 
    \label{prop:Dpm1}
    Let $\circ \in \{+, -, \phantom{*} \}$
    and $\fD_K^{\circ}(n) \not= \emptyset$.
    Then $\fD_K^\circ(n) \subset \cmet_K^\circ(n) \dtimes \N$
    and $\cmet_K^{\circ}(n) = \min_{\le}\bigl(\fD_K^\circ(n)\bigr) \in \fD_K^\circ(n)$.
    If $\circ \not= -$,
    then $\fD_K^\circ(n) = \cmet_K^\circ(n) \dtimes \N$ and $\divides{\cmet_K^\circ(n)}n$.
  \item
    \label{prop:Dpm2}
    If $d\in \fD^-_K(n)$, then $(d,n) \in \fD^-_K(n)$ or $2(d,n) \in \fD^-_K(n)$.
  \item
    \label{prop:Dpm3}
 $\fD_K^{\phantom +}(n;1) = \fD^\pm_K(n; 1)$.
  \item
    \label{prop:Dpm4}
 Let $\fD_K^+(n) = \emptyset$ but $\fD_K^-(n)\not=
    \emptyset$.
    Then $\divides {8\!} {\!\cmet_K^-(n)}$ and $\fD_K^-(n) = \cmet_K^-(n) \dtimes
    (2\N -1)$.
    Furthermore, $\cmet^-_K(n) = \gcd\left( d \in \fD^-_K(n) : \divides d n \right)$.
  \item
    \label{prop:Dpm5}
    Let $\fD^+_K(n) \not= \emptyset$.
    Then $\fD^-_K(n;0) = 2 \dtimes \fD^+_K(n) \subset \fD^+_K(n;0)$.
    If $\cmet^+_K(n)$ is even, then $\cmet^-_K(n) = 2\cmet^+_K(n)$;
    otherwise,
    $\fD^-_K(n) = \fD^+_K(n)$ and therefore $\cmet^-_K(n) = \cmet^+_K(n)$.
  \end{enumerate}
\end{prop}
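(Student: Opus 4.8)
The guiding principle is that membership $d \in \fD_K^\circ(n)$ says exactly $K \cap \E_n \subset \E_d^\circ$, so everything is governed by the single fixed subfield $F := K \cap \E_n$, an abelian number field contained in $\E_n$. The plan is to first record how $\E_d^\circ$ behaves under enlarging $d$ and under replacing $d$ by $\gcd(d,n)$, and then extract each of the five assertions. The key structural inputs are the compositum and intersection identities $\E_a^\circ \E_b^\circ = \E_{\lcm(a,b)}^\circ$ (for $\circ \in \{+,-\}$ one needs the coprime-factor version, $\E_a^\circ\E_b^\circ = \E_{ab}^\circ$ when $(a,b)=1$, together with $\E_{2^i}^+(\zeta_4) = \E_{2^{i+1}}^+ = \E_{2^i}^{\phantom+}$ and $\E_{2^i}^- \subsetneq \E_{2^{i}}^{\phantom-}$), and the relations between $\E_d^+$ and $\E_d^-$ for $d = 2^i \ell$: when $i \le 2$ both equal $\E_\ell$, when $i = 3$ one has $\E_d^+ = \E_d^- = \E_8^+\E_\ell = \E_\ell(\sqrt2\,)$-type fields that actually coincide, and for $i \ge 3$ in general $\E_d^+ \supset \E_d^-$ with $[\E_d^+ : \E_d^-] \in \{1,2\}$ depending on the parity of $i$. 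I would begin the write-up by proving the monotonicity lemma: $\fD_K^{\phantom+}(n)$ and $\fD_K^+(n)$ are closed under passing to multiples (because $\E_d^\circ \subset \E_{d'}^\circ$ whenever $d \mid d'$ for $\circ \in \{+,\phantom+\}$), whereas $\fD_K^-(n)$ is \emph{not}, and this asymmetry is the source of all the case distinctions.

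For part~(\ref{prop:Dpm1}): closure under multiples immediately gives, for $\circ \neq -$, that $\fD_K^\circ(n)$ is a union of arithmetic progressions $e\N$, hence equals $c\N$ where $c = \gcd(\fD_K^\circ(n)) = \cmet_K^\circ(n)$, and $c \in \fD_K^\circ(n)$; since $n \in \fD_K^{\phantom+}(n)$ trivially and $n \in \fD_K^+(n)$ reduces to $F \subset \E_n^+$ only when that holds, I should be careful — actually $\divides{\cmet_K^+(n)}{n}$ needs an argument: if $d \in \fD_K^+(n)$ then $(d,n) \in \fD_K^+(n)$ too, because $F = K \cap \E_n \subset \E_d^+ \cap \E_n \subset \E_d^{\phantom+}\cap \E_n = \E_{(d,n)}^{\phantom+}$, and intersecting the abelian field $F$ with $\E_n$ lands it in $\E_{(d,n)}$, but to get it inside $\E_{(d,n)}^+$ rather than merely $\E_{(d,n)}$ one uses that $F \subset \E_d^+$ means $F$ is fixed by the relevant complex-conjugation-type automorphism, which descends. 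So the refined statement is $\fD_K^+(n) \ni d \Rightarrow (d,n) \in \fD_K^+(n)$, giving $\cmet_K^+(n) \mid n$ and, combined with closure under multiples, $\fD_K^+(n) = \cmet_K^+(n)\N$. For $\circ = -$ one only gets $\fD_K^-(n) \subset \cmet_K^-(n)\N$ and that the minimum is attained, which follows once part~(\ref{prop:Dpm2}) is in hand: replacing $d$ by $(d,n)$ or $2(d,n)$ produces a divisor-type element bounded by $d$, and iterating shows the gcd is realized.

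Part~(\ref{prop:Dpm2}) is the technical heart and the step I expect to be the main obstacle. Given $d \in \fD_K^-(n)$, write $d = 2^i d'$ with $d'$ odd; I want $(d,n)$ or $2(d,n)$ back in $\fD_K^-(n)$. From $F \subset \E_d^-$ and $F \subset \E_n$ one gets $F \subset \E_d^- \cap \E_n$. The delicate point is computing $\E_d^- \cap \E_n$: the odd parts intersect cleanly to $\E_{(d',m)}$ where $n = 2^a m$, but the $2$-parts of $\E_d^-$ (a real-ish subfield of $\E_{2^i}$) intersected with $\E_{2^a}$ can, depending on whether $\zeta_4$ or $\sqrt2$ survives, land one level off — precisely the phenomenon that forces the choice between $(d,n)$ and $2(d,n)$. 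I would handle this by a direct Galois-theoretic computation in the $2$-local factor: $\Gal(\E_{2^i}/\Q) \cong \Z/2 \times \Z/2^{i-2}$, $\E_{2^i}^-$ corresponds to a specific index-$2$ subgroup, and intersecting with $\E_{2^a}$ amounts to intersecting subgroups; one checks the resulting field is $\E_{2^{\min(i,a)}}^-$ or $\E_{2^{\min(i,a)+1}}^-$. Then parts~(\ref{prop:Dpm3})–(\ref{prop:Dpm5}) follow by bookkeeping: (\ref{prop:Dpm3}) because for odd $d$, $\E_d^+ = \E_d^- = \E_d^{\phantom+} = \E_d$ (no $2$-part at all), so the three defining conditions coincide; (\ref{prop:Dpm4}) because $\fD_K^+(n) = \emptyset$ forces $F \not\subset \E_d^+$ for every $d$, in particular $F$ is not totally real in the relevant sense, so any $d \in \fD_K^-(n)$ must have $\E_d^- \subsetneq \E_d^+$, i.e. $8 \mid d$ with $i$ even-type, pinning $\fD_K^-(n)$ to odd multiples of $\cmet_K^-(n)$ with $8 \mid \cmet_K^-(n)$, and the gcd-over-divisors formula then comes from part~(\ref{prop:Dpm2}); (\ref{prop:Dpm5}) by the explicit relation $\E_{2d}^- = \E_d^+$ when $d = 2^i\ell$ with $i \ge 2$ (doubling the $2$-part turns the "$-$" field into the "$+$" field one level down, which is the assertion $\fD_K^-(n;0) = 2\fD_K^+(n)$), together with tracking whether $\cmet_K^+(n)$ is even to decide if the even and odd strata of $\fD_K^-(n)$ are both nonempty. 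Throughout, I would keep the arguments uniform by always reducing to the $2$-local computation and citing $\E_a\cap\E_b = \E_{(a,b)}$, $\E_a\E_b = \E_{\lcm(a,b)}$ from \S\ref{s:background} for the odd parts.
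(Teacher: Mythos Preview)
Your overall strategy---reduce everything to the fixed field $F = K\cap\E_n$ and compute via intersection formulas for the $\E_d^\circ$---matches the paper. But your picture of how $\E_d^+$ and $\E_d^-$ sit relative to one another is wrong, and this breaks parts~(\ref{prop:Dpm4})--(\ref{prop:Dpm5}) as you have sketched them. For $d = 2^i\ell$ with $i\ge 3$, the fields $\E_d^+$ and $\E_d^-$ are \emph{distinct} index-$2$ subfields of $\E_d$ with $\E_d^+\cap\E_d^- = \E_{d/2}^+$; neither contains the other. In particular $\E_8^+ = \Q(\sqrt 2)$ and $\E_8^- = \Q(\sqrt{-2})$ do \emph{not} coincide, contrary to what you wrote, and your claimed identity $\E_{2d}^- = \E_d^+$ is false (only the inclusion $\E_d^+\subset\E_{2d}^-$ holds, and that is what one actually needs for $2\fD_K^+(n)\subset\fD_K^-(n;0)$). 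Your argument for~(\ref{prop:Dpm4}) invokes ``$\E_d^-\subsetneq\E_d^+$'', which is never true; the correct mechanism is that if $\fD_K^+(n)=\emptyset$ and $d,e\in\fD_K^-(n)$, then $F\subset\E_d^-\cap\E_e^-$, and by the intersection formula this equals $\E_{(d,e)}^-$ \emph{only} when $\nu_2(d)=\nu_2(e)$ (otherwise it equals some $\E_f^+$, which is impossible here), forcing all elements of $\fD_K^-(n)$ to share the same $2$-adic valuation $\ge 3$.

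There is a second gap in your treatment of~(\ref{prop:Dpm1}) for $\circ=-$: you defer $\cmet_K^-(n)\in\fD_K^-(n)$ to part~(\ref{prop:Dpm2}), but (\ref{prop:Dpm2}) only relates $d$ to $(d,n)$ and says nothing about $\gcd(d,e)$ for two arbitrary elements $d,e\in\fD_K^-(n)$, so ``iterating'' does not obviously realise the gcd. The paper handles all three decorations uniformly by observing $\E_d^\circ\cap\E_e^\circ\subset\E_{(d,e)}^\circ$ directly from the intersection lemma (Lemma~\ref{lem:Epm_facts}); this gives closure of $\fD_K^\circ(n)$ under gcd in one line, and everything else in (\ref{prop:Dpm1}) follows. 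I would recommend you first write down and prove the full table of intersections $\E_a^{\circ_1}\cap\E_b^{\circ_2}$ (this is the paper's Lemma~\ref{lem:Epm_facts}, read off from the subgroup lattice of $(\Z/2^j)^\times$), and then rerun your plan using those formulas in place of the incorrect containments.
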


\begin{rem}
  \label{rem:finite_object}
  Let $K$ be a number field
  and $\circ \in \{ +, -, \phantom *\}$. 
  Using Proposition~\ref{prop:Dpm}(\ref{prop:Dpm1})--(\ref{prop:Dpm2}), 
  in order to test if $\fD^\circ_K(n)$ is empty,
  it suffices to test if some divisor of $2n$ belongs to it.
  If $\fD^\circ_K(n)\not= \emptyset$, then the precise
  value of $\cmet^\circ_K(n)$ can be computed using
  Proposition~\ref{prop:Dpm}(\ref{prop:Dpm1}),(\ref{prop:Dpm4}),(\ref{prop:Dpm5}).
  By Proposition~\ref{prop:cyclo_periodic}, it suffices to compute
  $\cmet_K^\circ(n)$ for the divisors of the conductor of $K_{\ab}$.
  It follows that a finite computation suffices to determine $\cmet^\circ_K$.
\end{rem}

It is well-known that $\E_n\cap\E_m = \E_{(n,m)}$ and $\E_n\E_m =
\E_{\lcm(n,m)}$ for $n,m\in \N$. 
In order to derive Proposition~\ref{prop:Dpm}, we consider related intersections
involving the fields $\E_n^{\pm}$ from \S\ref{s:background}.
Let $j\ge 3$.
The three involutions in $\Gal(\E_{2^j}/\Q) \cong (\Z/2^j)^\times$ are
$\zeta_{2^j} \mapsto -\zeta_{2^j}^{\phantom{1}}$, $\zeta_{2^j} \mapsto \zeta_{2^j}^{-1}$, and
$\zeta_{2^j} \mapsto -\zeta_{2^j}^{-1}$
with  corresponding fixed fields 
$\E_{2^{j-1}}^{\phantom +}$, $\E_{2^j}^+ = \E_{2^j}^{\phantom{+}} \cap \R$, and
$\E_{2^j}^-$, respectively.
By considering the subgroup lattice of $(\Z/2^j)^\times$, 
the subfields are seen to be arranged as in Figure~\ref{fig:subfields_cyclotomic}.
Using $\Gal(\E_{rs}/\Q)\cong \Gal(\E_r/\Q)\times
\Gal(\E_s/\Q)$ for $(r,s) = 1$, we can then read off the following.
\begin{figure}
\footnotesize
\[
\begin{xy}
  \xymatrix{
    & \E_{2^j} \ar@{-}[dl] \ar@{-}[dr]\ar@{-}[d]\\
    \E_{2^j}^+\ar@{-}[dr]   & \E_{2^j}^-\ar@{-}[d] & \E_{2^{j-1}} \ar@{-}[dl] \ar@{.}[dr]\\
               & \E_{2^{j-1}}^+\ar@{.}[dr] & & \E_8 \ar@{-}[dl] \ar@{-}[dr] \ar@{-}[d]\\
                          &           & \E_{8}^+\ar@{-}[dr]     &   \E_8^- \ar@{-}[d]          & \E_4 \ar@{-}[dl]\\
                                     &           &               & \Q 
    }
\end{xy}
\]
\caption{The subfield lattice of $\E_{2^j}$ for $j \ge 3$}
\label{fig:subfields_cyclotomic}
\end{figure}
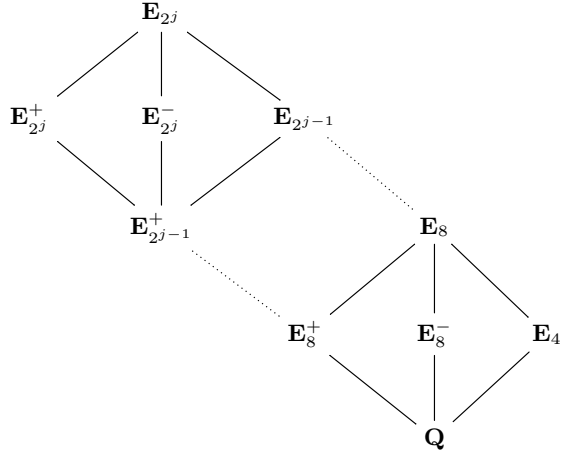

\begin{lemma}
  \label{lem:Epm_facts}
  Let $n,m \in \N$. Then:
  \begin{enumerate}
  \item
  \label{lem:Epm_facts1}
  $\E^+_n \cap \E^+_m = \E_n^+ \cap \E_m^{\phantom +} = \E^+_{(n,m)}$.
  \item
  \label{lem:Epm_facts2}
    $\E^+_n \cap \E^-_m = \begin{cases}
      \E^+_{(n,m)/2}, & 0 < \nu_2(m) \le \nu_2(n) \\
      \E^+_{(n,m)}, & \text{otherwise}.
      \end{cases}$
  \item
  \label{lem:Epm_facts3}
    $\E^-_n \cap \E^-_m = \begin{cases}
      \E^+_{(n,m)/2}, & 0 \not= \nu_2(n) \not= \nu_2(m) \not= 0\\
      \E^-_{(n,m)}, & \text{otherwise}.
    \end{cases}$
  \item
  \label{lem:Epm_facts4}
    $\E_n^{\phantom +} \cap \E^-_m = \begin{cases}
      \E^-_{(n,m)}, & \nu_2(n) \ge \nu_2(m) \\
     \E^+_{(n,m)}, & \text{otherwise}.
   \end{cases}$
   \qed
  \end{enumerate}
\end{lemma}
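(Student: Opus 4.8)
The plan is to derive all four identities from the subgroup lattice of $(\Z/2^j)^\times$ together with the splitting $\Gal(\E_{rs}/\Q) \cong \Gal(\E_r/\Q)\times\Gal(\E_s/\Q)$ for coprime $r,s$, exactly as announced before the statement. First I would reduce everything to the $2$-power case. Write $n = 2^a n'$ and $m = 2^b m'$ with $n',m'$ odd. Since $\E_n^\circ = \E_{2^a}^\circ\E_{n'}^{\phantom\circ}$ and the odd parts interact trivially with the $2$-parts under intersection — concretely $\E_{2^a}^\circ\E_{n'} \cap \E_{2^b}^\bullet\E_{m'} = (\E_{2^a}^\circ\cap\E_{2^b}^\bullet)\E_{(n',m')}$, which follows from $\E_r\cap\E_s = \E_{(r,s)}$ and the Galois decomposition applied to the compositum $\E_{2^{\max(a,b)}}\E_{n'm'}$ — each identity reduces to computing $\E_{2^a}^\circ\cap\E_{2^b}^\bullet$ and then reattaching the factor $\E_{(n',m')}$. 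Note $(n,m) = 2^{\min(a,b)}(n',m')$, so the odd part of $(n,m)$ is $(n',m')$ and $\nu_2((n,m)) = \min(a,b)$; this is what makes the case distinctions in the statement come out in terms of $\nu_2(n),\nu_2(m)$.

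Next I would handle the pure $2$-power intersections $\E_{2^a}^\circ\cap\E_{2^b}^\bullet$ by reading off Figure~\ref{fig:subfields_cyclotomic}. Without loss of generality $a\le b$ (using symmetry of the statements, after checking that swapping $n,m$ in each case genuinely gives a symmetric condition — it does, since $\min$ and the ``otherwise'' clauses are symmetric, and for (\ref{lem:Epm_facts2}) and (\ref{lem:Epm_facts4}) the two fields play different roles so there is nothing to swap). For (\ref{lem:Epm_facts1}): $\E_{2^a}^+\subset\E_{2^b}^+\subset\E_{2^b}$ when $a\le b$ and $a\ge 1$ (and the $a\le 2$ cases are trivial since then $\E_{2^a}^+=\Q$), so the intersection is $\E_{2^a}^+$; likewise $\E_{2^a}^+\subset\E_{2^b}$ gives $\E_{2^a}^+\cap\E_{2^b}=\E_{2^a}^+$. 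For the others one uses that among the three index-$2$ subfields of $\E_{2^j}$, namely $\E_{2^{j-1}}$, $\E_{2^j}^+$, $\E_{2^j}^-$, any two intersect in $\E_{2^{j-1}}^+$ (this is the top square of the figure), while $\E_{2^j}^- \cap \E_{2^{j-1}}$-type descents must be tracked one level at a time. The cleanest bookkeeping is: $\E_{2^b}^-$ contains $\E_{2^{b-1}}$ but not $\E_{2^b}^+$, and more generally $\E_{2^b}^-\cap\R = \E_{2^{b-1}}^+$; from this, $\E_{2^a}^+\cap\E_{2^b}^- = \E_{2^a}^+\cap\E_{2^{b-1}}^+ = \E_{2^{\min(a,b-1)}}^+$, which is $\E_{2^{a-1}}^+$ when $a=b$ and $\E_{2^a}^+$ when $a<b$ — matching (\ref{lem:Epm_facts2}) once one observes $0<\nu_2(m)\le\nu_2(n)$ forces $a=b$ here (as $a\le b$). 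The remaining cases (\ref{lem:Epm_facts3}), (\ref{lem:Epm_facts4}) are analogous: $\E_{2^a}^-\cap\E_{2^b}^- = \E_{2^a}^-$ if $a=b$ and $= \E_{2^{a}}^+$ if $a\ne b$ (both contained in $\R$ at different levels, intersecting in the smaller real field, but with an extra drop when the levels differ because the two distinct index-$2$-in-$\E_{2^{a+1}}$ real-ish subfields meet in $\E_{2^a}^+$); and $\E_{2^a}\cap\E_{2^b}^- = \E_{2^{\min(a,b)}}^-$ when $a\ge b$ (then $\E_{2^b}^-\subset\E_{2^b}\subset\E_{2^a}$) but $=\E_{2^a}^+$ when $a<b$ (then $\E_{2^a}\cap\E_{2^b}^-=\E_{2^a}^+\E_{2^{a-1}}\cap\cdots$; more simply, $\E_{2^a}\cap\E_{2^b}^- \subset \E_{2^a}\cap\R = \E_{2^a}^+$, and the reverse inclusion holds since $\E_{2^a}^+\subset\E_{2^{b-1}}^+\subset\E_{2^b}^-$).

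The main obstacle I anticipate is purely organisational rather than conceptual: making sure the ``extra drop'' — the phenomenon that two \emph{distinct} index-$2$ subfields of $\E_{2^{c}}$ always intersect in $\E_{2^{c-1}}^+$ rather than in some $\E_{2^{c-1}}$-type field — is applied in exactly the right cases, and that the boundary behaviour for small $j$ (where $\E_{2^j}^\pm$ degenerates: $\E_4^+=\E_2=\Q$, $\E_8^\pm$ are the two quadratic fields $\Q(\sqrt2),\Q(\sqrt{-2})$ besides $\Q(\sqrt{-1})=\E_4$) is consistent with the stated formulas. I would verify the small cases $j\in\{2,3,4\}$ by hand against the figure and then let the uniform lattice argument cover $j\ge 4$. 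A secondary point requiring care is the claim used in the reduction step, that compositum and intersection of cyclotomic-type fields split along coprime conductors; this is standard (it is the assertion just before the lemma that one reads off from $\Gal(\E_{rs}/\Q)\cong\Gal(\E_r/\Q)\times\Gal(\E_s/\Q)$ applied inside $\E_{2^{\max(a,b)}n'm'}$), but it should be invoked explicitly.
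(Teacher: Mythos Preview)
Your overall plan is exactly the paper's: reduce to the $2$-power case via $\Gal(\E_{rs}/\Q)\cong\Gal(\E_r/\Q)\times\Gal(\E_s/\Q)$ and then read the intersections off the subfield lattice in Figure~\ref{fig:subfields_cyclotomic}. The paper itself gives no more detail than that sentence and the figure.

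There is, however, a genuine slip in your execution. You assert---explicitly in the parenthetical for~(\ref{lem:Epm_facts3}) (``both contained in $\R$''), and implicitly in the step ``$\E_{2^a}\cap\E_{2^b}^-\subset\E_{2^a}\cap\R$'' for~(\ref{lem:Epm_facts4})---that $\E_{2^j}^-\subset\R$. This is false for $j\ge 3$: you yourself record $\E_8^-=\Q(\sqrt{-2})$, and in general $\zeta_{2^j}-\zeta_{2^j}^{-1}=2i\sin(2\pi/2^j)$ is purely imaginary. As a result your stated value $\E_{2^a}^-\cap\E_{2^b}^-=\E_{2^a}^+$ for $a<b$ is off by one level; the lemma asserts $\E^+_{(n,m)/2}$, i.e.\ $\E_{2^{a-1}}^+$ in the pure $2$-power case. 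A correct lattice argument recovers this: for $a<b$ one has $\E_{2^a}\cap\E_{2^b}^-=\E_{2^a}^+$ because the generator $2^{b-1}-1$ of $\Gal(\E_{2^b}/\E_{2^b}^-)$ reduces to $-1$ modulo $2^a$, and then $\E_{2^a}^-\cap\E_{2^b}^-\subset\E_{2^a}^-\cap\E_{2^a}^+=\E_{2^{a-1}}^+$ from the figure, with the reverse inclusion immediate. The remedy throughout is to argue directly with subgroups of $(\Z/2^b)^\times$ rather than via reality of subfields; once you do that, your outline goes through and matches the paper.
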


\begin{proof}[Proof of Proposition~\ref{prop:Dpm}]
  We freely use Lemma~\ref{lem:Epm_facts}.
  \begin{enumerate}
  \item[(\ref{prop:Dpm1})]
    Let $\circ \in \{+, -, \phantom{*} \}$
    and $d,e \in \fD_K^\circ(n)$.
    Then $K \cap \E_n \subset \E^\circ_d \cap \E^\circ_e \subset \E^\circ_{(d,e)}$ and
    therefore $(d,e) \in \fD_K^\circ(n)$.
    Since $\cmet_K^{\circ}(n) = \gcd(F)$ for some finite $F\subset \fD_K^\circ(n)$, we
    conclude that $\cmet_K^{\circ}(n) \in \fD_K^\circ(n)$ whence the first two claims follow
    immediately.
    Let $\circ\not= -$. Then $\E_d^\circ \subset \E_e^\circ$ 
    for $\divides d e$ so that $\cmet_K^\circ(n)\dtimes\N \subset \fD_K^\circ(n)$.
    Finally, if $d\in \fD_K^\circ(n)$, then $K \cap \E_n \subset
    \E_d^\circ\cap\E_n = \E_{(d,n)}^\circ$ whence $(d,n)\in \fD_K^\circ(n)$ and the final claim follows.
  \item[(\ref{prop:Dpm2})]
    $K \cap \E_n^{\phantom +} \subset \E^-_d \cap \E_n^{\phantom +}$
    which is either equal to
    $\E^-_{(d,n)}$ or to $\E^+_{(d,n)} \subset \E^-_{2(d,n)}$.
  \item[(\ref{prop:Dpm3})]
    $\E_d^{\phantom +} = \E^\pm_d$ for odd $d \in \N$.
  \item[(\ref{prop:Dpm4})]
    Let $d,e \in \fD^-_K(n)$. Then $K \cap \E_n^{\phantom +} \subset \E^-_d \cap \E^-_e$
    and $\E^-_d \cap \E^-_e \not= \E^+_f$ for any $f \in \N$
    whence $\nu_2(d) = \nu_2(e) \ge 3$.
    Thus, $(d,e) \in \fD^-_K(n)$ 
    and we conclude that $\cmet_K^-(n) \in \fD^-_K(n)$ (by~(\ref{prop:Dpm1})) is divisible by $8$ and
    $\fD^-_K(n) = \cmet^-_K(n) \dtimes (2\N - 1)$.
    Finally, if $d \in \fD^-_K(n)$,
    then $K \cap \E_n^{\phantom +} \subset \E_n^{\phantom +} \cap \E^-_d = \E_{(n,d)}^-$ since $\fD_K^+(n)=\emptyset$.
    Hence, $(d,n) \in \fD^-_K(n)$.
  \item[(\ref{prop:Dpm5})]
    Write $e = \cmet^+_K(n)$ and let $d \in \fD^-_K(n;0)$.
    Then $K \cap \E_n^{\phantom +} \subset \E_d^- \cap \E_e^+ = \E_f^+$,
    where $f = (d,e)/2$ if $\nu_2(e) \ge \nu_2(d)$ and
    $f = (d,e)$ otherwise.
    By~(\ref{prop:Dpm1}) and since $d$ is
    even, $f = \divides e  d$ and $\nu_2(d) > \nu_2(e)$.
    Therefore, $\divides {2e} d$ and hence
    $\fD_K^-(n;0) \subset 2\cmet_K^+(n) \dtimes \N = 2 \fD^+_K(n)$ by~(\ref{prop:Dpm1}).
    Conversely, let $d \in \N$ with $\divides{2e} d$.
    Then $K \cap \E_n^{\phantom +} \subset \E_e^+ \subset \E^+_{d/2} \subset
    \E^-_d$ whence
    $2 \fD^+_K(n) \subset \fD^-_K(n;0)$.
    The final claims now follow using (\ref{prop:Dpm1}) and (\ref{prop:Dpm3}).
    \qedhere
  \end{enumerate}
\end{proof}

\subsection{Proofs of Lemmas~\ref{lem:En_Enp}--\ref{lem:sqrt_En}}
\label{ss:cyclo_proofs}

\begin{lemma}
  \label{lem:cyclo_crit}
  Let $d,n\in \N$, $\divides d n$, and let $L \subset \E_d$
  be a subfield.
  Then the restriction map
  $\Gal( \E_n K/L K) \xrightarrow{\varrho} \Gal(\E_n/L)$ is injective.
  It is surjective if and only if
  $K \cap \E_n \subset L$.
\end{lemma}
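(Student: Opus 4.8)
The plan is to reduce everything to the \emph{translation theorem} (the theorem on natural irrationalities). Since $\E_n/\Q$ is finite abelian, hence Galois, the compositum $\E_n K/K$ is finite Galois and restriction to $\E_n$ gives an isomorphism $\Gal(\E_n K/K)\xrightarrow{\ \sim\ }\Gal(\E_n/\E_n\cap K)$; crucially this requires only $\E_n/\Q$, not $K/\Q$, to be finite Galois, so it applies to our possibly infinite $K\subset\bar\Q$. Applying the same statement with $LK$ in place of $K$ (note $L\subset\E_n$ is finite over $\Q$, so $LK/K$ is finite, and $\E_n K=\E_n\cdot LK$ because $L\subset\E_n$ and $K\subset LK$), we obtain that $\E_n K/LK$ is finite Galois and that $\varrho$ maps $\Gal(\E_n K/LK)$ isomorphically onto $\Gal(\E_n/\E_n\cap LK)$.

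Granting this, injectivity of $\varrho$ is immediate: an automorphism of $\E_n K/LK$ fixing $\E_n$ pointwise fixes $\E_n\cdot LK=\E_n K$, hence is trivial. (That $\varrho$ is even well defined, i.e.\ that $\E_n$ is stable under $\Gal(\E_n K/LK)$, uses only the normality of $\E_n/\Q$.) Since the image of $\varrho$ is $\Gal(\E_n/\E_n\cap LK)$ and $L\subseteq\E_n\cap LK$ always holds, $\varrho$ is surjective if and only if $\E_n\cap LK=L$. It therefore remains to prove that $\E_n\cap LK=L$ is equivalent to $K\cap\E_n\subset L$. One direction is trivial: if $\E_n\cap LK=L$, then $K\cap\E_n\subseteq LK\cap\E_n=L$.

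For the converse, suppose $K\cap\E_n\subset L$ and fix $\sigma\in\Gal(\E_n/L)$. Then $\sigma$ fixes $K\cap\E_n$ pointwise, so $\sigma\in\Gal(\E_n/\E_n\cap K)$, and by the translation theorem for $\E_n K/K$ it extends to some $\tilde\sigma\in\Gal(\E_n K/K)$. Now $\tilde\sigma|_{\E_n}=\sigma$ fixes $L\subseteq\E_n$ and $\tilde\sigma$ fixes $K$, so $\tilde\sigma$ fixes $LK$; that is, $\tilde\sigma\in\Gal(\E_n K/LK)$ with $\varrho(\tilde\sigma)=\sigma$. Hence $\varrho$ is surjective and $\E_n\cap LK=L$, which completes the argument.

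The step I expect to be the main obstacle — or at least the one requiring care — is this last extension argument: one must lift $\sigma$ to an automorphism \emph{over $K$} (attempting to extend it directly over $LK$ would be circular) and then observe that any such lift automatically fixes $L$, hence $LK$; the hypothesis $K\cap\E_n\subset L$ is precisely what makes the lift over $K$ available. The only other thing to watch is not to assume $K/\Q$ finite, which is why the translation theorem must be invoked in the form needing only $\E_n/\Q$ to be finite Galois.
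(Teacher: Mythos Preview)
Your proof is correct. The paper takes a different, more top-down route: it works inside the absolute Galois group $G=\Gal(\bar\Q/\Q)$, setting $U=\Gal(\bar\Q/K)$, $N=\Gal(\bar\Q/\E_n)$, $M=\Gal(\bar\Q/L)$, and then identifies $\varrho$ with the natural map $(U\cap M)/(U\cap N)\to M/N$; injectivity is immediate from the factorisation through $(U\cap M)N/N$, and surjectivity becomes the group-theoretic condition $(U\cap M)N=M$, which by Dedekind's modular law equals $UN\cap M=M$, i.e.\ $M\le UN$, i.e.\ $K\cap\E_n\subset L$. Your approach instead stays at the level of finite relative extensions: two applications of the translation theorem identify the image of $\varrho$ as $\Gal(\E_n/\E_n\cap LK)$, and you then prove $\E_n\cap LK=L\Leftrightarrow K\cap\E_n\subset L$ by an explicit lifting argument. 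The paper's route is slicker once the profinite framework is set up and makes the surjectivity criterion drop out of a single lattice identity; yours is more elementary, avoids infinite Galois theory altogether, and makes transparent exactly where the hypothesis $K\cap\E_n\subset L$ is used (namely, to guarantee a lift over $K$ rather than over $LK$, which would be circular). Both are perfectly good; your care about not assuming $K/\Q$ finite is well placed.
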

\begin{proof}
  Let $G = \Gal(\bar\Q/\Q)$, $U = \Gal(\bar\Q/K) \le G$,
  $N = \Gal(\bar\Q/\E_n) \normal G$, and $M = \Gal(\bar\Q/L)
  \normal G$.
  By Galois theory,
  we obtain a commutative diagram
  \[
  \begin{xy}
    \xymatrix{
      (U \cap M)/(U \cap N) \ar[r]\ar[d]^\cong & M/ N \ar[d]^\cong\\
      \Gal(\E_n K / L K) \ar[r]^\varrho &   \Gal(\E_n/L)
    }
  \end{xy}
  \]
  where all maps are the natural ones.
  The top map factors as
  \[
  (U \cap M)/(U \cap N) \xrightarrow{\cong}
  (U \cap M) N/ N \incl
  M/N
  \]
  whence $\varrho$ is injective.
  By Dedekind's modular law \cite[1.3.14]{Rob96}, we have $(U \cap M)N
  = UN \cap M$.
  Hence, $\varrho$ is surjective if and only if 
  $M \le UN$ or, equivalently, $K \cap \E_n \subset L$.
\end{proof}

\begin{cor}
  \label{cor:cyclo_res}
  Let $d,n\in \N$ with $\divides d n$, let $\circ \in \{ +, -, \phantom *\}$,
  and let $\Gal(\E_n^{\phantom .} K/\E_d^\circ K) \xto{\varrho}
  \Gal(\E_n^{\phantom .}/\E_d^\circ)$ be the (necessarily injective) restriction map.
  \begin{enumerate}
  \item
    \label{cor:cyclo_res1}
    If $\circ \in \{ +, \phantom *\}$, then $\varrho$ is
    surjective if and only if $\divides{\cmet^\circ_K(n)} d$.
  \item
    \label{cor:cyclo_res2}
    If $\circ = -$, then $\varrho$ is surjective if and only if one of the
    following conditions is satisfied:
    \begin{enumerate}
    \item[(\textlabel{a}{cor:cyclo_res2a})]
      $\divides {2\cmet_K^+(n)} d$ if $d$ is even or $\divides {\cmet^+_K(n)} d$
      if $d$ is odd.
  \item[(\textlabel{b}{cor:cyclo_res2b})]
    $\cmet^+_K(n) = 0$,
    $\divides{\cmet^-_K(n)} d$, and
    $\ndivides{2\cmet^-_K(n)} {d}$.
    \end{enumerate}
  \end{enumerate}
\end{cor}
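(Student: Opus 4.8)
The plan is to reduce both parts to the single membership question ``$d\in\fD_K^\circ(n)$?'' and then read off the answer from Proposition~\ref{prop:Dpm}. First I would note that, because $\divides d n$ and by the explicit description of $\E_d^\circ$ in \S\ref{s:background}, we have $\E_d^\circ\subset\E_d\subset\E_n$; hence Lemma~\ref{lem:cyclo_crit} applies with $L=\E_d^\circ$. It tells us both that $\varrho$ is injective (which is why the statement can call it ``necessarily injective'') and that $\varrho$ is surjective if and only if $K\cap\E_n\subset\E_d^\circ$, i.e.\ if and only if $d\in\fD_K^\circ(n)$. So the whole corollary is now a matter of restating ``$d\in\fD_K^\circ(n)$'' in terms of $\cmet_K^{\phantom +}$, $\cmet_K^+$, $\cmet_K^-$.

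For part~(\ref{cor:cyclo_res1}) I would invoke Proposition~\ref{prop:Dpm}(\ref{prop:Dpm1}): if $\fD_K^\circ(n)\not=\emptyset$ then $\fD_K^\circ(n)=\cmet_K^\circ(n)\dtimes\N$, so $d\in\fD_K^\circ(n)$ is equivalent to $\divides{\cmet_K^\circ(n)}d$; and if $\fD_K^\circ(n)=\emptyset$ then $\cmet_K^\circ(n)=\gcd(\emptyset)=0$, so both ``$d\in\fD_K^\circ(n)$'' and ``$\divides 0 d$'' fail for $d\in\N$. In either case the claimed equivalence holds. (For $\circ=\phantom *$ only the non-empty case occurs, since $n\in\fD_K(n)$.)

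For part~(\ref{cor:cyclo_res2}) I would split on whether $\fD_K^+(n)$ is empty. If $\fD_K^+(n)\not=\emptyset$, then Proposition~\ref{prop:Dpm}(\ref{prop:Dpm3}),(\ref{prop:Dpm5}) pin down $\fD_K^-(n)$ exactly: its odd part is $\fD_K^+(n;1)$ and its even part is $2\dtimes\fD_K^+(n)$. Combining this with $\fD_K^+(n)=\cmet_K^+(n)\dtimes\N$ shows that $d\in\fD_K^-(n)$ if and only if ($d$ odd and $\divides{\cmet_K^+(n)}d$) or ($d$ even and $\divides{2\cmet_K^+(n)}d$) — i.e.\ exactly condition~(\ref{cor:cyclo_res2a}) — while~(\ref{cor:cyclo_res2b}) is vacuously false here because $\cmet_K^+(n)\not=0$. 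If instead $\fD_K^+(n)=\emptyset$, then $\cmet_K^+(n)=0$ makes~(\ref{cor:cyclo_res2a}) impossible for $d\in\N$; when additionally $\fD_K^-(n)=\emptyset$ we have $\cmet_K^-(n)=0$ and both sides fail, and when $\fD_K^-(n)\not=\emptyset$ Proposition~\ref{prop:Dpm}(\ref{prop:Dpm4}) gives $\fD_K^-(n)=\cmet_K^-(n)\dtimes(2\N-1)$ with $\divides 8{\cmet_K^-(n)}$, so $d\in\fD_K^-(n)$ is equivalent to $\divides{\cmet_K^-(n)}d$ together with $\ndivides{2\cmet_K^-(n)}d$, which is precisely~(\ref{cor:cyclo_res2b}) in this case.

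I do not expect a genuine obstacle; the work is essentially bookkeeping. The one point to be careful about is keeping the degenerate cases ($\fD_K^\circ(n)=\emptyset$, equivalently $\cmet_K^\circ(n)=0$) consistent with the divisibility conventions — in particular that ``$\divides 0 d$'' is false for every $d\in\N$, so an empty $\fD_K^\circ(n)$ correctly corresponds to $\varrho$ never being surjective. Once that convention is fixed, each of the four cases above matches the stated conditions mechanically.
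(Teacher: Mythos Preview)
Your proposal is correct and follows essentially the same approach as the paper: reduce surjectivity to the membership question $d\in\fD_K^\circ(n)$ via Lemma~\ref{lem:cyclo_crit}, then unpack that membership using the various parts of Proposition~\ref{prop:Dpm}, with the same case split on whether $\cmet_K^+(n)$ vanishes. Your treatment is slightly more explicit about the degenerate empty cases and the divisibility convention $\ndivides 0 d$, but otherwise the arguments coincide.
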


\begin{proof}
  Using Lemma~\ref{lem:cyclo_crit} with $L = \E_d^\circ$,
  the map $\varrho$ is surjective if and only if $d \in \fD_K^\circ(n)$.
  Part~(\ref{cor:cyclo_res1}) thus follows from Proposition~\ref{prop:Dpm}(\ref{prop:Dpm1}).
  For~(\ref{cor:cyclo_res2}), let $\circ = -$
  and note that (\ref{cor:cyclo_res2a}) and (\ref{cor:cyclo_res2b}) 
  are mutually exclusive.
  Let $\cmet^+_K(n) \not= 0$.
  By Proposition~\ref{prop:Dpm}(\ref{prop:Dpm5}), $\fD^-_K(n)$ consists of
  those multiples of $\cmet^+_K(n)$ which are odd (if any) and arbitrary
  multiples of $2 \cmet_K^+(n)$.
  Hence, $d \in \fD^-_K(n)$ is equivalent to (\ref{cor:cyclo_res2a}).
  If $\cmet^+_K(n) = \cmet^-_K(n) = 0$,
  then neither (\ref{cor:cyclo_res2a}) nor (\ref{cor:cyclo_res2b}) 
  can be satisfied and $\varrho$ is not surjective since $\fD_K^-(n) = \emptyset$.
  Finally, let $\cmet^+_K(n) = 0 \not= \cmet^-_K(n)$ so 
  that Proposition~\ref{prop:Dpm}(\ref{prop:Dpm4}) applies.
  In particular, $\divides 8 {\cmet^-_K(n)}$ and
  $d \in \fD^-_K(n)$ if and only
  if $\divides {\cmet^-_K(n)} d$ and $d/\!\cmet^-_K(n)$ is odd.
  The latter condition can be replaced by $\ndivides
  {2\cmet^-_K(n)} {d}$.
\end{proof}

\begin{proof}[Proof of Lemma~\ref{lem:En_Enp}]
  If $\ndivides {p^2} n$, then $\idx{\E_n K:\E_{n/p}K} \le 
  \idx{\E_n:\E_{n/p}}=p-1$ so
  let $\divides{p^2}n$.
  As $\divides{\cmet_K(n)} n$ by Proposition~\ref{prop:Dpm}(\ref{prop:Dpm1}),
  the claim follows from Corollary~\ref{cor:cyclo_res}(\ref{cor:cyclo_res1})
  with $d = n/p$.
\end{proof}

\begin{proof}[Proof of Lemma~\ref{lem:sqrt_En}]
  First, $\E_n^{\phantom +} = \E_n^\pm(\sqrt{-1}) \not= \E_n^\pm$ 
  since $\divides 4 n$.
  Thus, $\sqrt{-1}\not\in \E_n^\pm K$ if and only if $\idx{\E_n^{\phantom +}\! K : \E_n^\pm K} = 2$
  or, equivalently,
  restriction 
  $\Gal(\E_n^{\phantom+}\! K / \E_n^\pm K) \to \Gal(\E_n^{\phantom +}\! / \E_n^\pm)$ is surjective. 
  By Proposition~\ref{prop:Dpm}(\ref{prop:Dpm1}),
  if $\cmet_K^+(n) \not= 0$, then $\divides{\cmet_K^+(n)} n$.
  Now apply Corollary~\ref{cor:cyclo_res} with $d = n$.
  This proves (\ref{lem:sqrt_En1})
  and also (\ref{lem:sqrt_En2}) if we add the 
  condition  ``$\cmet_K^+(n) = 0$'' to (\ref{lem:sqrt_En2b}) in Lemma~\ref{lem:sqrt_En}.
  To complete the proof, we show that in Lemma~\ref{lem:sqrt_En},
  if (\ref{lem:sqrt_En2b}) is satisfied and $\cmet_K^+(n) \not= 0$, then
  (\ref{lem:sqrt_En2a}) is satisfied too.
  By Proposition~\ref{prop:Dpm}(\ref{prop:Dpm5}),
  $\cmet_K^-(n) = 2\cmet_K^+(n)$ if $\cmet_K^+(n)$ is even and
  $\cmet_K^-(n) = \cmet_K^+(n)$ otherwise.
  If $\cmet_K^+(n)$ were odd,
  then, since $\divides {\cmet_K^-(n)} n$, we would have
  $\divides{2\cmet_K^-(n)}{n}$, contradicting (\ref{lem:sqrt_En2b}).
  Thus, $\cmet_K^+(n)$ is even and $\cmet_K^-(n) = \divides{2\cmet_K^+(n)} n$
  which establishes (\ref{lem:sqrt_En2a}).
\end{proof}

\section{Characterising primitive \ANC{} groups over number fields}
\label{s:anc_class}

In this section, we derive number-theoretic conditions which characterise
those \ANC{} groups $G$ such that $G(K)$ in Theorem~\ref{thm:irranc}
is primitive.
Our description depends on $G$ and invariants of $K$, in particular the
$\cmet_K^\circ$ from \S\ref{s:relcyclo}. 

Recall that a \emph{supernatural number} is a formal product $a = \prod_p
p^{n_p}$ indexed by primes with $n_p\in \N\cup\{0,\infty\}$, see e.g.~\cite[\S
2.1]{Wil98}; we write $\nu_p(a) = n_p$.
Every natural number is a supernatural number and divisibility of natural numbers
naturally extends to the supernatural case.
We will use supernatural numbers to concisely encode notions of generalised
``square-freeness''.
For a supernatural number $a$, define
$$\aug a = a \dtimes \prod\bigl\{ p \text{ prime}: \nu_p(a) = 0 \bigr\} = \lcm(a, 2,3,5,7,11,\dotsc).$$
In particular, $d\in \N$ is square-free if and only if $\divides d {\aug 1}$.

\begin{lemma}
  \label{lem:Enp_compact}
  Let $n\in \N$.
  Then $\idx{\E_nK:\E_{n/p}K}\not= p$ for every prime $p$ with $\divides p n$ if and only
  if $\divides n {\aug{\cmet_K(n)}}$.
\end{lemma}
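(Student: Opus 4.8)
The plan is to reduce the two-sided statement to a pointwise analysis of prime divisors of $n$ and then invoke Lemma~\ref{lem:En_Enp}. First I would observe that the condition $\idx{\E_nK:\E_{n/p}K}\ne p$ holding for \emph{every} prime $\divides p n$ is, by Lemma~\ref{lem:En_Enp}, the negation of ``there exists a prime $p$ with $\ddivides p {p^2} n$ and $\divides p {n/\cmet_K(n)}$.'' So it suffices to show that this latter existential statement is equivalent to $\ndivides n {\aug{\cmet_K(n)}}$. Writing $c = \cmet_K(n)$, recall from Proposition~\ref{prop:Dpm}(\ref{prop:Dpm1}) that $\divides c n$, so $n/c\in\N$ is well-defined.

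The core is then an elementary divisibility manipulation with the operator $\aug{\cdot}$. Unwinding the definition, $\aug c = c \cdot \prod\{p : \nu_p(c) = 0\}$, so for each prime $p$ one has $\nu_p(\aug c) = \nu_p(c)$ if $\nu_p(c)\ge 1$ and $\nu_p(\aug c) = 1$ if $\nu_p(c) = 0$. Hence $\ndivides n {\aug c}$ means there is a prime $p$ with $\nu_p(n) > \nu_p(\aug c)$, which (splitting on whether $\divides p c$) says exactly: either $\divides p c$ and $\nu_p(n) > \nu_p(c)$, or $\ndivides p c$ and $\nu_p(n) \ge 2$. I would then check that each of these two alternatives is equivalent to the conjunction ``$\ddivides p {p^2} n$ and $\divides p {n/c}$'' for that same $p$. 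In the first alternative, $\nu_p(n) > \nu_p(c)\ge 1$ forces $\nu_p(n)\ge 2$, i.e.\ $\divides{p^2}n$, and $\nu_p(n/c) = \nu_p(n) - \nu_p(c)\ge 1$ gives $\divides p{n/c}$; conversely these two conditions plus $\divides p c$ give back $\nu_p(n)\ge\nu_p(c)+1$. In the second, $\ndivides p c$ gives $\nu_p(n/c) = \nu_p(n)\ge 2$, so both $\divides{p^2}n$ and $\divides p{n/c}$ hold; conversely $\divides p{n/c}$ with $\ndivides p c$ means $\nu_p(n)\ge 1$, and combined with $\divides{p^2}n$ we are in this case. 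This shows the existential statement over all primes $\divides p n$ matches $\ndivides n{\aug c}$ prime-by-prime, and negating both sides completes the argument.

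I do not expect a serious obstacle here; the only point requiring a little care is bookkeeping the two cases $\divides p c$ versus $\ndivides p c$ consistently and making sure the quantifier ``for every prime $\divides p n$'' in the statement matches ``for every prime'' once one notes that primes not dividing $n$ trivially satisfy $\idx{\E_nK:\E_{n/p}K}\ne p$ (indeed $n/p$ is not an integer, or one simply restricts attention to $\divides p n$ throughout). One should also note at the outset that $c\mid n$ so that divisors of $c$ are automatically divisors of $n$, which keeps the case $\divides p c$ within the range of the quantifier.
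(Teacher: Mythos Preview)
Your proposal is correct and follows essentially the same route as the paper. The paper's proof compresses the elementary divisibility step into a single sentence---``for $d\in\N$ with $\divides d n$, it is easy to see that $\divides n{\aug d}$ if and only if $\ndivides p{\frac n d}$ for every prime $p$ with $\divides{p^2}n$''---and then sets $d=\cmet_K(n)$ and invokes Lemma~\ref{lem:En_Enp}; your argument simply unpacks that ``easy to see'' via $p$-adic valuations and the case split $\divides p c$ versus $\ndivides p c$.
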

\begin{proof}
  For $d \in \N$ with $\divides d n$,
  it is easy to see that $\divides n {\aug d}$
  if and only if $\ndivides p {\frac n d}$
  for every prime $p$ with $\divides {p^2} n$.
  Setting $d = \cmet_K(n)$, the claim follows from Lemma~\ref{lem:En_Enp}.
\end{proof}

We thus obtain the following concise characterisation of primitivity for cyclic groups.

\begin{cor}
  \label{cor:general_prim_cyclic}
  For a number field $K$,
  $\Cyc n(K)$ is primitive if and only if $\divides n {\aug{\cmet_K(n)}}$.
\end{cor}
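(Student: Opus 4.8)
The plan is to combine Lemma~\ref{lem:pre_prim_cyclic} with Lemma~\ref{lem:Enp_compact}. Recall that by Lemma~\ref{lem:pre_prim_cyclic}, $\Cyc n(K)$ is primitive if and only if $\idx{\E_n K : \E_{n/p}K} \not= p$ for each prime $\divides p n$. This is precisely the hypothesis appearing in Lemma~\ref{lem:Enp_compact}, so the equivalence with $\divides n {\aug{\cmet_K(n)}}$ follows immediately. The only thing to check before invoking Lemma~\ref{lem:pre_prim_cyclic} is that $K$, being a number field, may be regarded as a subfield of $\bar\Q$ (after fixing an embedding), which is exactly the standing hypothesis $K \subset \bar\Q$ under which Lemma~\ref{lem:pre_prim_cyclic} and the entire \S\ref{s:relcyclo} are stated; the invariant $\cmet_K(n)$ depends only on the abstract isomorphism type of $K$ together with this embedding, and for a number field it is a genuine finite object by Proposition~\ref{prop:cyclo_periodic}.

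So the proof is essentially a one-line chaining of two already-established equivalences, and I do not expect any real obstacle. If one wanted to be careful, the point to verify is that the chosen embedding $K \incl \bar\Q$ is immaterial: two embeddings differ by an automorphism of $\bar\Q$, which sends $\E_n$ to itself (as $\E_n/\Q$ is Galois) and hence sends $K \cap \E_n$ to a conjugate subfield, but $\cmet_K(n) = \gcd\{ d : K \cap \E_n \subset \E_d\}$ is unchanged under such a conjugation since the $\E_d$ are also Galois over $\Q$ and fixed setwise. Thus $\cmet_K(n)$, and with it the condition $\divides n {\aug{\cmet_K(n)}}$, is well-defined independent of the embedding, so the statement ``for a number field $K$'' makes sense.

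In short: by Lemma~\ref{lem:pre_prim_cyclic}, $\Cyc n(K)$ is primitive precisely when $\idx{\E_n K : \E_{n/p} K} \not= p$ for every prime $\divides p n$; by Lemma~\ref{lem:Enp_compact}, the latter holds precisely when $\divides n {\aug{\cmet_K(n)}}$. Combining these yields the corollary.
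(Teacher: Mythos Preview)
Your proof is correct and follows exactly the paper's approach: the paper's proof is the single line ``Combine Lemmas~\ref{lem:pre_prim_cyclic} and~\ref{lem:Enp_compact}.'' Your additional remarks about the independence of the embedding $K \incl \bar\Q$ are valid but not needed for the argument as stated in the paper.
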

\begin{proof}
  Combine Lemmas \ref{lem:pre_prim_cyclic} and \ref{lem:Enp_compact}.
\end{proof}

As one of our main results,
we may similarly rephrase Lemma~\ref{lem:cyc_homg_irred} and Proposition~\ref{prop:rawchar}:
\begin{thm}
  \label{thm:char}
  Let $G$ be a non-abelian \ANC{} group of order $2n$, where
  $n = 2^j m$ ($j \ge 2$) and $m$ is odd.
  Let $K$ be a number field.
  Define invariants $\cmet_K^{\phantom +},\cmet_K^\pm\colon \N\to \N\cup\{0\}$
  of $K$ as in \S\ref{s:relcyclo}.
  Recall the definition of $\aug a$ for $a\in \N$ from the beginning of this section.
  \begin{enumerate}
  \item
  \label{thm:char_dih}
    If $G_2$ is dihedral,
    then $G(K)$ is primitive if and only if $\cmet_K^+(n) \not= 0$ and $\divides n {\aug{\cmet_K(n)}}$.
  \item
  \label{thm:char_sdih}
    Let $G_2$ be semidihedral.
    Then $G(K)$ is primitive if and only if
    $\divides {\cmet_K^-(n)} n$ and
    $\divides n {\aug{\cmet_K(n)}}$.
  \item
  \label{thm:char_genquat}
    Let $G_2$ be generalised quaternion with $\card{G_2} > 16$.
    Then $G(K)$ is primitive if and only if
    $\cmet_K^+(n) \not= 0$, $\divides n {\aug{\cmet_K(n)}}$, and, in addition, $K$
    is totally imaginary or $m > 1$.
  \item
  \label{thm:char_quat8}
    If $G_2 \cong \Quat 8$,
    then $G(K)$ is primitive if and only if 
    $\cmet_K^+(n)\not= 0$, $\divides m{\aug{\cmet_K(m)}}$, 
    $\order(2\bmod m)\dtimes\idx{K_{\fp}:\Q_2}$ is even for all
    primes $\divides {\fp} 2$ of $K$ and, finally, $K$ is totally imaginary or $m > 1$.
  \item
  \label{thm:char_quat16}
    Let $G_2 \cong \Quat{16}$.
    Then $G(K)$ is primitive if and only if the following conditions are
    satisfied:
    \begin{itemize}
    \item $\cmet_K^+(n)\not= 0$.
    \item $\divides m {\aug{\cmet_K(m)}}$.
    \item $K$ is totally imaginary or $m > 1$.
    \item If $\order(2\bmod m)\dtimes\idx{K_{\fp}:\Q_2}$ is even for all primes
      $\divides {\fp} 2$ of $K$, then $n/\!\cmet_K(n)$ is odd.
    \end{itemize}
  \end{enumerate}
\end{thm}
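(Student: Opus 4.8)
}
The plan is to combine the three earlier pillars: the irreducibility criteria for a cyclic index-$2$ subgroup from Lemma~\ref{lem:cyc_homg_irred}, the primitivity dichotomy from Proposition~\ref{prop:rawchar}, and the cyclotomic translations of Lemmas~\ref{lem:En_Enp}--\ref{lem:sqrt_En} (packaged in the form of Lemma~\ref{lem:Enp_compact} and Corollary~\ref{cor:general_prim_cyclic}). The overall structure mirrors the case split of Proposition~\ref{prop:rawchar}, but before invoking it one must first secure its standing hypothesis, namely that a cyclic subgroup $A\normal G(K)$ of index $2$ is irreducible. For this we feed the relevant $\circ\in\{+,-\}$ (determined by the isomorphism type of $G_2$ exactly as in Lemma~\ref{lem:cyc_homg_irred}) into Lemma~\ref{lem:cyc_homg_irred}(\ref{lem:cyc_homg_irred1})--(\ref{lem:cyc_homg_irred2}); the homogeneity condition ``$\sqrt{-1}\notin\E_n^\circ K$'' is rendered numerically by Lemma~\ref{lem:sqrt_En}, and the solubility of $x^2+y^2=-1$ in $\E_n^+ K$ (needed when $\anctype(G)=-1$) is exactly the condition that, via Lemma~\ref{lem:schur_quat}, makes the Schur index $1$.

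Concretely I would argue case by case. For $G_2$ dihedral, $\circ=+$ and $\anctype(G)=1$, so by Lemma~\ref{lem:cyc_homg_irred} irreducibility of $A$ is equivalent to $\sqrt{-1}\notin\E_n^+K$, which by Lemma~\ref{lem:sqrt_En}(\ref{lem:sqrt_En1}) is $\cmet_K^+(n)\neq 0$; then Proposition~\ref{prop:rawchar}(\ref{prop:rawchar1}) says $G(K)$ is primitive iff $\idx{\E_nK:\E_{n/p}K}\neq p$ for all primes $\divides p n$, which by Lemma~\ref{lem:Enp_compact} is $\divides n{\aug{\cmet_K(n)}}$, giving (\ref{thm:char_dih}). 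For $G_2$ semidihedral, $\circ=-$ and still $\anctype(G)=1$, so irreducibility of $A$ is $\sqrt{-1}\notin\E_n^-K$; by Lemma~\ref{lem:sqrt_En}(\ref{lem:sqrt_En2}) this is condition (a) $\divides{2\cmet_K^+(n)}n$ or (b) $\divides{\cmet_K^-(n)}n$ with $\ndivides{2\cmet_K^-(n)}n$, and one checks using Proposition~\ref{prop:Dpm}(\ref{prop:Dpm1}),(\ref{prop:Dpm5}) that the disjunction collapses to the single clean statement $\divides{\cmet_K^-(n)}n$ (in case (a), $\cmet_K^+(n)\neq 0$ forces $\divides{\cmet_K^+(n)}n$ and hence, via $\cmet_K^-(n)\in\{{\cmet_K^+(n)},2\cmet_K^+(n)\}$, that $\divides{\cmet_K^-(n)}n$; conversely if $\divides{\cmet_K^-(n)}n$ then either we are in case (b) or $\cmet_K^-(n)=2\cmet_K^+(n)$ puts us in (a)); combining with Proposition~\ref{prop:rawchar}(\ref{prop:rawchar1}) and Lemma~\ref{lem:Enp_compact} yields (\ref{thm:char_sdih}).

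For the generalised quaternion cases, $\circ=+$ and $\anctype(G)=-1$, so Lemma~\ref{lem:cyc_homg_irred}(\ref{lem:cyc_homg_irred2}) adds the requirement that $x^2+y^2=-1$ be soluble in $\E_n^+K$; the point is that for a number field $K$ this fails precisely when $K$ is totally real and the relevant $2$-part of $n$ is not too large — more precisely, $-1$ is a sum of two squares in a number field $F$ iff $F$ is totally imaginary or $F$ contains $\sqrt{-1}$ locally nowhere at the real places, and since $\E_n^+K\supset\E_m$ one gets that solubility holds iff $K$ is totally imaginary or $m>1$ (the extra root of unity $\zeta_m$ kills the real obstruction), which is exactly the side condition appearing in (\ref{thm:char_genquat})--(\ref{thm:char_quat16}). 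With that in hand, $\card{G_2}>16$ uses Proposition~\ref{prop:rawchar}(\ref{prop:rawchar1}) verbatim to give (\ref{thm:char_genquat}); for $G_2\cong\Quat8$ one uses Proposition~\ref{prop:rawchar}(ii), which drops the prime $2$ from the divisibility test — so only odd primes $\divides p m$ matter and Lemma~\ref{lem:Enp_compact} applied to $m$ gives $\divides m{\aug{\cmet_K(m)}}$ — and adds the $\ord(2\bmod m)\dtimes\idx{K_{\fp}:\Q_2}$ parity condition coming from the local analysis in \cite[\S8]{primnil}, yielding (\ref{thm:char_quat8}); for $G_2\cong\Quat{16}$ one uses Proposition~\ref{prop:rawchar}(iii), whose clause (b) is the ``if that parity condition holds, then $\idx{\E_nK:\E_{n/2}K}\neq 2$'' requirement, and Lemma~\ref{lem:En_Enp} rewrites $\idx{\E_nK:\E_{n/2}K}\neq 2$ as ``$\ndivides 4 n$ or $\ndivides 2{\frac n{\cmet_K(n)}}$'', i.e. (since $\divides 4 n$ here) as $n/\!\cmet_K(n)$ being odd, giving (\ref{thm:char_quat16}). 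The main obstacle I anticipate is the bookkeeping in the semidihedral case — verifying that the two-clause criterion of Lemma~\ref{lem:sqrt_En}(\ref{lem:sqrt_En2}) genuinely simplifies to $\divides{\cmet_K^-(n)}n$ in the presence of the other constraints — together with pinning down the precise ``totally imaginary or $m>1$'' reformulation of solubility of $x^2+y^2=-1$ in $\E_n^+K$; both are elementary but require care with the $2$-adic behaviour of the fields $\E_{2^j}^\pm$ described in Figure~\ref{fig:subfields_cyclotomic}.
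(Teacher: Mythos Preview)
Your approach is the same as the paper's, and most of it goes through, but there are two genuine gaps.

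\textbf{Semidihedral case.} Your claimed collapse of Lemma~\ref{lem:sqrt_En}(\ref{lem:sqrt_En2}) to the single condition $\divides{\cmet_K^-(n)}n$ is false in general. Your converse argument silently assumes $\cmet_K^+(n)\not=0$; when $\cmet_K^+(n)=0$ and $\cmet_K^-(n)\not=0$ (so Proposition~\ref{prop:Dpm}(\ref{prop:Dpm4}) applies), one can have $\divides{\cmet_K^-(n)}n$ and $\divides{2\cmet_K^-(n)}n$ simultaneously, so neither (\ref{lem:sqrt_En2a}) nor (\ref{lem:sqrt_En2b}) holds. Concretely, $K=\Q(\sqrt{-2})=\E_8^-$ and $n=16$ gives $\cmet_K^+(16)=0$, $\cmet_K^-(16)=8\mid 16$, yet $\E_{16}^-K=\E_{16}\ni\sqrt{-1}$. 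The fix, which is what the paper does, is to use the \emph{other} condition $\divides n{\aug{\cmet_K(n)}}$: since $\divides 8 n$, this forces $\nu_2(\cmet_K(n))=\nu_2(n)$, hence $\ndivides{2\cmet_K^+(n)}n$ (so (\ref{lem:sqrt_En2a}) always fails) and $n/\!\cmet_K^-(n)$ is automatically odd whenever $\divides{\cmet_K^-(n)}n$ (so (\ref{lem:sqrt_En2b}) reduces to $\divides{\cmet_K^-(n)}n$). You flagged this as a likely obstacle at the end; it is one, and the resolution genuinely requires the primitivity constraint.

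\textbf{The $\Quat 8$ case.} Your reformulation of ``$x^2+y^2=-1$ soluble in $\E_n^+K$'' as ``$K$ totally imaginary or $m>1$'' is only valid for $j\ge 3$, where $\sqrt 2\in\E_{2^j}^+\subset\E_n^+K$ forces even $2$-adic local degrees (this is Lemma~\ref{lem:translate}(\ref{lem:translate4})). For $G_2\cong\Quat 8$ we have $j=2$ and $\E_n^+K=\E_mK$, so solubility is governed by Lemma~\ref{lem:translate}(\ref{lem:translate3}) and carries the additional $2$-adic parity condition. That parity condition therefore enters through the \emph{irreducibility} of $A$ (Lemma~\ref{lem:cyc_homg_irred}(\ref{lem:cyc_homg_irred2})), not through Proposition~\ref{prop:rawchar}(ii), which imposes no such clause. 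Your two errors here cancel in the final statement of (\ref{thm:char_quat8}), but the logic as written is wrong. Relatedly, ``Lemma~\ref{lem:Enp_compact} applied to $m$'' only controls $\idx{\E_mK:\E_{m/p}K}$; to pass from $\idx{\E_nK:\E_{n/p}K}\not=p$ for odd $p$ to $\divides m{\aug{\cmet_K(m)}}$ you need the bridge Lemma~\ref{lem:translate}(\ref{lem:translate1}).
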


The main improvement of Theorem~\ref{thm:char} over Proposition~\ref{prop:rawchar}
is that for a given number field $K$, since the invariants $\cmet_K^\circ$ are
finite objects which can be explicitly computed
(Remark~\ref{rem:finite_object}),
Theorem~\ref{thm:char} provides us with a concise arithmetic description 
of \textit{all} non-abelian \ANC{} groups $G$ such that $G(K)$ is primitive.
We will illustrate the strength of Theorem~\ref{thm:char} in \S\ref{s:examples}.
Our proof of Theorem~\ref{thm:char}, given below, relies on the following.
\begin{lemma}
  \label{lem:translate}
  Let $n \in \N$.
  Write $n = 2^j m$, where $m$ is odd.
  \begin{enumerate}
  \item
    \label{lem:translate1}
    $\idx{\E_n K : \E_{n/p}K} \not= p$ for all prime divisors $\divides
    p m$ if and only if $\divides m{\aug{\cmet_K(m)}}$.
  \item
    \label{lem:translate2}
    Let $\divides 4 n$.
    Then $\idx{\E_n K:\E_{n/2}K} \not= 2$ if and only if $n/\!\cmet_K(n)$ is odd.
  \item
    \label{lem:translate3}
    \textup{(\cite[\S 8.1]{irrednil}.)}
    $x^2+y^2=-1$ is soluble in $\E_m K$ if and only if 
    $\order(2\bmod m)\dtimes\idx{K_{\fp}:\Q_2}$ is even for all
    primes $\divides {\fp} 2$ of $K$ and, in addition, $K$ is totally imaginary
    or $m > 1$.
  \item
    \label{lem:translate4}
    If $\divides 8 n$,
    then $x^2+y^2=-1$ is soluble in $\E_n^+ K$ if and only if $K$ is totally
    imaginary or $m > 1$.
  \end{enumerate}
\end{lemma}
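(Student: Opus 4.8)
The plan is to get (ii) directly from Lemma~\ref{lem:En_Enp}, to quote (iii) from \cite[\S 8.1]{irrednil}, and to reduce both (i) and (iv) to earlier results. For (ii): since $\divides 4 n$, Lemma~\ref{lem:En_Enp} with $p=2$ says $\idx{\E_nK:\E_{n/2}K}=2$ exactly when $\divides 2{\tfrac n{\cmet_K(n)}}$, i.e.\ exactly when $n/\!\cmet_K(n)$ is even, and negating gives (ii). For (i), I would restrict Lemma~\ref{lem:En_Enp} to the odd primes $\divides p m$: there $\divides{p^2}n \Leftrightarrow \divides{p^2}m$ and, since $\divides{\cmet_K(n)}n$ by Proposition~\ref{prop:Dpm}(\ref{prop:Dpm1}), $\divides p{\tfrac n{\cmet_K(n)}}\Leftrightarrow \nu_p(\cmet_K(n))<\nu_p(n)$; using $\nu_p(n)=\nu_p(m)$ this shows the left-hand side of (i) holds iff $\nu_p(\cmet_K(n))=\nu_p(m)$ for every prime $p$ with $\divides{p^2}m$. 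On the other hand, exactly as in the proof of Lemma~\ref{lem:Enp_compact}, $\divides m{\aug{\cmet_K(m)}}$ holds iff $\nu_p(\cmet_K(m))=\nu_p(m)$ for every such $p$. Hence (i) reduces to the assertion that, for odd $p$ with $\divides{p^2}m$,
\[ \nu_p(\cmet_K(n))=\nu_p(n) \quad\Longleftrightarrow\quad \nu_p(\cmet_K(m))=\nu_p(m); \]
by Proposition~\ref{prop:Dpm}(\ref{prop:Dpm1}) (so $\fD_K(n)=\cmet_K(n)\dtimes\N$, $\fD_K(m)=\cmet_K(m)\dtimes\N$, $\divides{\cmet_K(n)}n$, $\divides{\cmet_K(m)}m$) this is in turn equivalent to $K\cap\E_n\subset\E_{n/p} \Leftrightarrow K\cap\E_m\subset\E_{m/p}$.

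The forward implication is easy: intersecting $K\cap\E_n\subset\E_{n/p}$ with $\E_m$ and using $\E_m\cap\E_{n/p}=\E_{(m,n/p)}=\E_{m/p}$ (as $m$ is odd) gives $K\cap\E_m\subset\E_{m/p}$. The reverse implication is the step I expect to cost real work, and I would handle it by Galois theory. Write $G=\Gal(\E_n/\Q)\cong\Gal(\E_{2^j}/\Q)\times\Gal(\E_m/\Q)$, let $\alpha\in K\cap\E_n$, and put $H=\Gal(\E_n/\Q(\alpha))\le G$. Since $\E_{n/p}=\E_{2^j}\E_{m/p}$ and $p$ is odd, $T:=\Gal(\E_n/\E_{n/p})$ has the shape $1\times T'$ with $T'=\Gal(\E_m/\E_{m/p})$ cyclic of order $p$. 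The hypothesis $K\cap\E_m\subset\E_{m/p}$ forces $\Q(\alpha)\cap\E_m\subset\E_{m/p}$, which dualises to $\pi(H)\supset T'$ for the projection $\pi\colon G\to\Gal(\E_m/\Q)$. Choosing $h=(\sigma,t)\in H$ with $t$ a generator of $T'$, the order of $\sigma$ divides the $2$-power $\varphi(2^j)$ and is therefore coprime to $\ord(t)=p$, so a suitable power of $h$ equals $(1,t)$; thus $T=\langle(1,t)\rangle\subset H$, i.e.\ $\Q(\alpha)\subset\E_n^{T}=\E_{n/p}$. As $\alpha$ was arbitrary, $K\cap\E_n\subset\E_{n/p}$, which finishes (i). The conceptual point — and the main obstacle — is that ramification of $K$ at an odd prime $p$ above cyclotomic level $n/p$ is already visible over $\E_m$, since the relevant inertia has order coprime to the degree of the $2$-part $\E_{2^j}$.

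Part (iii) is quoted verbatim. For (iv) I would deduce it from (iii). Since $\divides 8 n$ we have $\E_8^+=\Q(\sqrt 2)\subset\E_{2^j}^+$, so $\E_n^+K\supset \E_mK'$ with $K':=\Q(\sqrt 2)K$. Apply (iii) with $K$ replaced by $K'$: $x^2+y^2=-1$ is soluble in $\E_mK'$ iff $\ord(2\bmod m)\dtimes\idx{K'_\fp:\Q_2}$ is even for all primes $\divides\fp2$ of $K'$ and ($K'$ is totally imaginary or $m>1$). The dyadic condition is automatic, because $\sqrt2\in K'$ forces $\Q_2(\sqrt2)\subset K'_\fp$, so $\idx{K'_\fp:\Q_2}$ is even. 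Also $K'$ is totally imaginary iff $K$ is: $\Q(\sqrt2)$ is totally real, so any real place of $K$ extends to $K'$, while any real place of $K'$ restricts to one of $K$. Hence $x^2+y^2=-1$ is soluble in $\E_mK'$, and a fortiori in $\E_n^+K$, whenever $K$ is totally imaginary or $m>1$. Conversely, if $m=1$ and $K$ is not totally imaginary, then $\E_n^+K=\E_{2^j}^+K$ has a real place (again since $\E_{2^j}^+$ is totally real), and $x^2+y^2=-1$ has no solution over a field with a real place; so solubility in $\E_n^+K$ forces $K$ totally imaginary or $m>1$. This proves (iv).
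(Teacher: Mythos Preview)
Your proof is correct. Parts~(\ref{lem:translate2}) and~(\ref{lem:translate3}) match the paper exactly. For~(\ref{lem:translate1}) and~(\ref{lem:translate4}) your arguments are sound but take a slightly different route from the paper, so a brief comparison is in order.

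For~(\ref{lem:translate1}), the paper bypasses the invariants $\cmet_K$ entirely and proves directly that $\idx{\E_nK:\E_{n/p}K}=p$ iff $\idx{\E_mK:\E_{m/p}K}=p$ for each odd prime $p\mid m$; the claim then follows immediately from Lemmas~\ref{lem:En_Enp} and~\ref{lem:Enp_compact}. The argument is short: writing $r=\idx{\E_nK:\E_{n/p}K}$ and $s=\idx{\E_mK:\E_{m/p}K}$, one has $r\mid s\mid p$ by Galois theory, and if $s=p$ but $r=1$ then $\E_mK\subset\E_nK=(\E_{m/p}K)\E_{2^j}$ forces $s$ to divide the $2$-power $\idx{\E_{2^j}:\Q}$, a contradiction. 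You instead unpack both sides through $\cmet_K$ down to the equivalence $K\cap\E_n\subset\E_{n/p}\Leftrightarrow K\cap\E_m\subset\E_{m/p}$ and prove the latter by a coprimality-of-orders argument inside $\Gal(\E_n/\Q)\cong\Gal(\E_{2^j}/\Q)\times\Gal(\E_m/\Q)$. The underlying idea (the $2$-part and the $p$-part do not interact) is the same; the paper's version is just more streamlined because it never leaves the language of degrees.

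For~(\ref{lem:translate4}), the paper applies~(\ref{lem:translate3}) with $K$ replaced by $\E_{2^j}^+K$ itself, so that $\E_m(\E_{2^j}^+K)=\E_n^+K$ and one gets an if-and-only-if in one stroke. You instead pass to the smaller field $K'=\Q(\sqrt 2)K\subset\E_{2^j}^+K$, which yields solubility in $\E_mK'\subset\E_n^+K$ under the stated hypothesis, but then you must handle the converse separately via the real-place argument. Both are fine; choosing the full $\E_{2^j}^+K$ just saves that extra paragraph.
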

\begin{proof}
  \quad
  \begin{enumerate}
  \item[(\ref{lem:translate1})]
    By Lemmas \ref{lem:En_Enp} and \ref{lem:Enp_compact}, it suffices to
    show that if $p$ is a prime divisor of $m$, then
    $\idx{\E_nK:\E_{n/p}K} = p$ if and only if $\idx{\E_m K: \E_{m/p}K} = p$.
    To that end, by Galois theory,
    $r = \idx{\E_nK:\E_{n/p}K}$ divides $s = \idx{\E_m K: \E_{m/p}K}$
    which in turn  divides $\idx{\E_{p^a}:\E_{p^{a-1}}} \le p$, where $a = \nu_p(m)$.
    Hence, if $r = p$, then $s = p$. 
    Conversely, let $s = p$.
    Then $a \ge 2$ (otherwise, $s \le p-1$) and $r\in \{1,p\}$.
    Suppose, for the sake of contradiction, that $r = 1$.
    Then $\E_m K \subset \E_n K = \E_{n/p}K = (\E_{m/p}K) \E_{2^j}$,
    whence $s$ divides $t = \idx{(\E_{m/p}K)\E_{2^j} : \E_{m/p}K}$.
    However, $t$ divides $\idx{\E_{2^j}:\Q}$, which is a power of $2$.
    This contradicts $s = p$ and proves that $r = p$.
  \item[(\ref{lem:translate2})]
    Immediate from Lemma \ref{lem:En_Enp}.
  \item[(\ref{lem:translate4})]
    $\sqrt{2} \in \E_n^+K$ so the local degrees in
    (\ref{lem:translate3}) (with $\E_{2^j}^+K$ in place of $K$) are even.
    Also, since $\E_{2^j}^+$ is totally real,
    $\E_n^+K$ is totally imaginary if and only if $\E_m K$ is.
    \qedhere
  \end{enumerate}
\end{proof}

\begin{proof}[Proof of Theorem~\ref{thm:char}]
  Lemmas \ref{lem:cyc_homg_irred} and
  \ref{lem:sqrt_En} together characterise irreducibility of the cyclic
  maximal subgroup $G(K)$ in terms of $\cmet_K^{\pm}(n)$.
  In order to derive the conditions stated in Theorem~\ref{thm:char},
  combine Proposition~\ref{prop:rawchar}, 
  Lemma~\ref{lem:Enp_compact},
  and Lemma~\ref{lem:translate}.
  For instance, in (\ref{thm:char_dih}), the group $G(K)$ is primitive if and
  only if $\sqrt{-1}\not\in \E_n^+ K$ (Lemma~\ref{lem:cyc_homg_irred}) and
  $\idx{\E_nK:\E_{n/p}K}\not=p$ for all primes $\divides p n$
  (Proposition~\ref{prop:rawchar}(\ref{prop:rawchar1}));
  these two conditions are equivalent to $\cmet_K^+(n)\not= 0$
  (Lemma~\ref{lem:sqrt_En}) and $\divides n{\aug{\cmet_K(n)}}$
  (Lemma~\ref{lem:Enp_compact}), respectively.
  The other cases (\ref{thm:char_sdih})--(\ref{thm:char_quat16}) are obtained similarly.
  For~(\ref{thm:char_sdih}), we also need 
  need the following two observations which allow us to replace the conditions
  in Lemma~\ref{lem:sqrt_En}(\ref{lem:sqrt_En2}) by ``$\divides 
  {{\cmet_K^-(n)}} n$''.
  First, if $\divides n {\aug{\cmet_K(n)}}$, then $\ndivides {2\cmet_K^+(n)} {n}$.
  Indeed, suppose that $\divides {2\cmet_K^+(n)} {n}$.
  Then $\divides {2\cmet_K(n)} {n}$ and
  thus $\nu_2(\cmet_K(n)) \le \nu_2(n/2)$.
  As $\divides 8 n$,
  we obtain $\nu_2(\aug{\cmet_K(n)}) \le \nu_2(n/2) <
  \nu_2(n)$ and so $\ndivides n {\aug{\cmet_K(n)}}$, a contradiction.
  Secondly, if $\divides n {\aug{\cmet_K(n)}}$ and $\divides {\cmet_K^-(n)} n$, then
  $n/\!\cmet_K^-(n)$ is necessarily odd.
  To that end,
  $\divides 8 n$ implies that $\nu_2(n) \le \nu_2(\aug{\cmet_K(n)})
  = \nu_2(\cmet_K(n)) \le \nu_2(n)$ whence $n/\!\cmet_K(n)$ is odd.
  As $\ddivides {\cmet^{\phantom +}_K(n)} {\cmet_K^-(n)} n$, we conclude that
  $n/\!\cmet_K^-(n)$ is odd.
\end{proof}

\begin{rem}
  \label{rem:sylow}
  In the case of $p$-groups, there is an unavoidable overlap
  between the techniques used above and those in \cite{LGP86,Kon87}.
  For instance, the field invariants $\alpha,\beta,\gamma$ used in \cite{LGP86}
  are concerned with the inclusions of the fields $\E_{2^i}^{\phantom 1}$ and
  $\E_{2^i}^\pm$ in the ground field.
  In our approach, these fields enter (in a different way) via the
  invariants $\cmet_K^{\phantom *}$ and $\cmet_K^\pm$.
  The latter invariants were initially considered by the author in an attempt
  to encode the behaviour of $\alpha$, $\beta$, and $\gamma$ under cyclotomic
  extensions.
\end{rem}

Corollary~\ref{cor:general_prim_cyclic} and Theorem~\ref{thm:char}
characterise primitivity of $G(K)$ for fixed $K$ and varying $G$ in
terms of the $\cmet_K^\circ$.
Regarding the case of a fixed $G$, we note the following.

\begin{prop}
  \label{prop:all}
  Let $G$ be an \ANC{} group.
  Then there exists an abelian number field~$K$ such that $G(K)$ is primitive.
\end{prop}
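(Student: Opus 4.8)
The plan is to exhibit, for each isomorphism type of \ANC{} group $G$, an explicit abelian number field $K$, and then to verify the relevant primitivity criterion — Corollary~\ref{cor:general_prim_cyclic} when $G$ is cyclic, Theorem~\ref{thm:char} when $G$ is non-abelian. A recurring observation is the following: if $\divides 4n$ and $K \cap \E_n$ is a non-cyclotomic subfield of index $2$ in $\E_n$, then $\fD_K(n) = n\N$, and hence $\cmet_K(n) = n$. Indeed, for $d \in \fD_K(n)$ we have $K \cap \E_n \subset \E_d \cap \E_n = \E_{(d,n)}$, so $\E_{(d,n)}$ is a cyclotomic subfield of $\E_n$ containing the non-cyclotomic index-$2$ subfield $K \cap \E_n$; hence $\E_{(d,n)} = \E_n$, so $(d,n) = n$ (using $\divides 4n$) and $\divides nd$. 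For cyclic $G = \Cyc n$, the field $K = \E_n$ works: here $K \cap \E_n = \E_n$, so $\cmet_K(n) \in \{n, n/2\}$ (the value $n/2$ occurring only for $n \equiv 2 \bmod 4$, when $\E_{n/2} = \E_n$), and in either case $\divides n{\aug{\cmet_K(n)}}$; thus $\Cyc n(\E_n)$ is primitive by Corollary~\ref{cor:general_prim_cyclic}.

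Now let $G$ be non-abelian with $\card G = 2n$, $n = 2^j m$, $m$ odd; by Theorem~\ref{thm:anc_class}, $G_2 \cong \Quat 8$ (so $j = 2$) or $G_2$ is one of $\Dih{2^{j+1}}$, $\SDih{2^{j+1}}$, $\Quat{2^{j+1}}$ with $j \ge 3$, and in every case $\divides 4n$. Suppose first that $G_2$ is (semi)dihedral; set $K = \E_n^+$ if $G_2$ is dihedral and $K = \E_n^-$ if $G_2$ is semidihedral. Since $\divides 8n$, the field $\E_n^\circ$ is a subfield of index $2$ in $\E_n$ which is not cyclotomic: $\E_n^+$ because the only totally real cyclotomic field is $\Q$, and $\E_n^-$ because $\E_n^- = \E_d$ — necessarily with $\sqrt{-1} \notin \E_d$, as $\E_n^-$ is totally imaginary but does not contain $\sqrt{-1}$ — would force $\E_n = \E_d(\sqrt{-1})$ and hence $\nu_2(n) = 2$, contradicting $\divides 8n$. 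So $\cmet_K(n) = n$ and $\divides n{\aug{\cmet_K(n)}}$. In the dihedral case $n \in \fD_K^+(n)$, so $\cmet_K^+(n) \ne 0$ and Theorem~\ref{thm:char}(\ref{thm:char_dih}) gives primitivity. In the semidihedral case, Lemma~\ref{lem:Epm_facts} shows that $\E_n^- \subset \E_d^-$ exactly when $\nu_2(d) = j$ and $\divides md$; hence $\fD_K^-(n) = n\dtimes(2\N - 1)$, so $\cmet_K^-(n) = n$ divides $n$, and Theorem~\ref{thm:char}(\ref{thm:char_sdih}) gives primitivity.

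Finally, suppose $G_2$ is generalised quaternion. Fix an odd prime $\ell$ with $\ndivides \ell n$ and set $K = \E_n^+ \E_\ell$ if $j \ge 3$, while $K = \E_m \E_\ell \E_8^+$ if $G_2 \cong \Quat 8$ (recall $\E_8^+ = \Q(\sqrt 2)$, and $\E_8^+ \subset \E_n^+$ whenever $j \ge 3$). In either case $\E_m \subset K$, so $\cmet_K(m) = m$ and $\divides m{\aug{\cmet_K(m)}}$; moreover $K$ contains the totally imaginary field $\E_\ell$ and so is totally imaginary, and $\sqrt 2 \in K$, so $2$ is non-split in a quadratic subfield of $K$ and $\idx{K_\fp : \Q_2}$ is even for every prime $\divides \fp 2$ of $K$. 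A short computation in $\Gal(\E_{n\ell}/\Q)$ (using $\ndivides \ell n$) shows $K \cap \E_n = \E_n^+$ when $j \ge 3$, and a similar one in $\Gal(\E_{8m\ell}/\Q)$ (using $j = 2$) shows $\sqrt{-1} \notin \E_m K$ when $G_2 \cong \Quat 8$; in both cases $\cmet_K^+(n) \ne 0$ by Lemma~\ref{lem:sqrt_En}(\ref{lem:sqrt_En1}), and when $j \ge 3$ the observation above yields $\cmet_K(n) = n$. Feeding these facts into Theorem~\ref{thm:char} — part~(\ref{thm:char_genquat}), (\ref{thm:char_quat8}), or (\ref{thm:char_quat16}) according to $\card{G_2}$, noting that part~(\ref{thm:char_quat8}) requires this $2$-adic parity while part~(\ref{thm:char_quat16})'s conditional clause is automatic here since $n/\!\cmet_K(n) = 1$ — shows that $G(K)$ is primitive.

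\textbf{Main obstacle.} The hard part is this last case, in particular $G_2 \cong \Quat 8$ and $\Quat{16}$: there primitivity is governed not only by cyclotomic degree conditions but, through Lemma~\ref{lem:translate}(\ref{lem:translate3})--(\ref{lem:translate4}), by solubility of $x^2 + y^2 = -1$ over $\E_m K$ — equivalently, by a parity condition on the $2$-adic local degrees of $K$ together with total imaginarity of $K$. The resolution is to adjoin a single quadratic field in which $2$ is non-split, namely $\Q(\sqrt 2) = \E_8^+$ (present in $\E_n^+$ already when $j \ge 3$), which forces all local degrees at $2$ to be even, and to adjoin an auxiliary odd cyclotomic factor $\E_\ell$ to make $K$ totally imaginary without bringing in $\sqrt{-1}$; the latter is essential, since $\sqrt{-1} \in \E_n^+ K$ would make the cyclic maximal subgroup of $G(K)$ non-homogeneous (Lemma~\ref{lem:cyc_homg_irred}), hence $G(K)$ imprimitive. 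Choosing $\ell$ coprime to $n$ guarantees that these extra factors do not enlarge $K \cap \E_n$, so the cyclotomic degree conditions remain as computed.
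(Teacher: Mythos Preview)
Your argument is correct and your choice of fields is essentially the same as the paper's: $\E_n$ for cyclic groups, $\E_n^+$ and $\E_n^-$ for the dihedral and semidihedral cases, and $\E_{n\ell}^+$ (which equals your $\E_n^+\E_\ell$ when $\ell\nmid n$) for generalised quaternion groups of order $\ge 16$. The two proofs differ in packaging and in one genuine detail. First, you verify primitivity via the processed invariants $\cmet_K^\circ$ and Theorem~\ref{thm:char}, which forces you to compute $\cmet_K(n)$ and $\cmet_K^\pm(n)$ explicitly; the paper instead appeals directly to Lemma~\ref{lem:cyc_homg_irred} and Proposition~\ref{prop:rawchar}, checking conditions like $\sqrt{-1}\notin\E_n^\pm$ and $\E_n=\E_{n/p}\E_n^\pm$ by inspection, which is shorter. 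Second, for $G_2\cong\Quat 8$ the paper takes $K=\E_{m\ell}$ with $\ell$ chosen so that $\ord(2\bmod\ell)$ is even (forcing the $2$-adic local degrees to be even via $\E_\ell$), whereas you take $K=\E_{m\ell}(\sqrt 2)$ with $\ell$ an arbitrary odd prime not dividing $n$ and let the adjoined $\sqrt 2$ do that job instead; both work, and your variant has the mild advantage of not needing the existence of primes with $\ord(2\bmod\ell)$ even.
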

\begin{proof}
  This is largely a consequence of Lemma~\ref{lem:cyc_homg_irred} and
  Proposition~\ref{prop:rawchar} which we both use freely.
  First, $\Cyc{n}(\E_n)$ is trivially primitive.
  Let $n = 2^j m$ for $j \ge 2$ and odd $m\in \N$.
  Then $\sqrt{-1}\not\in \E_n^\pm$.
  If $p$ is a prime divisor of~$n$, then
  $\E_n^{\phantom+} = \E_{n/p}^{\phantom+} \E_n^\pm$, unless $p = j = 2$.
  It follows that $(\Dih{2^{j+1}}\times\Cyc m)(\E_n^+)$ and $(\SDih{2^{j+1}}\times \Cyc m)(\E_n^-)$
  are primitive for $j \ge 3$.
  By Lemma~\ref{lem:translate}(\ref{lem:translate4}),
  if $j \ge 2$, then $(\Quat{2^{j+1}}\times \Cyc m)(\E_{n\ell}^+)$ is primitive
  for any odd $\ell > 1$
  such that $\order(2\bmod \ell)$ is even; there are infinitely many such $\ell$,
  cf.~Remark~\ref{rem:density} below.
\end{proof}

\section{Applications}
\label{s:examples}

As an illustration of \S\ref{s:anc_class},
we describe explicitly the \ANC{} groups $G$ such that $G(K)$ is primitive, where
$K$ is a cyclotomic (Theorem~\ref{thm:cyclo_prim}) or a quadratic
(Theorem~\ref{thm:quadratic_prim}) field.

\subsection{Cyclotomic fields}

Recall that $\E_r$ denotes the $r$th cyclotomic field.
We now apply the results from \S\ref{s:anc_class} to
give a precise description of those \ANC{} groups $G$ such that
$G(\E_r)$ is primitive.
Since $\E_r = \E_{2r}$ for odd $r$,
we may assume that $r\not\equiv 2 \bmod 4$.
It is well-known that we may then recover $r$ from $\E_r$ by considering the roots
of unity in the latter (use e.g.~\cite[Cor.~3.5.12]{Coh07}).

\begin{thm}
  \label{thm:cyclo_prim}
  Let $r\not\equiv 2 \bmod 4$.
  Recall the definition of $\aug a$ for $a\in \N$ from the beginning of \S\ref{s:anc_class}.
  A complete list (up to isomorphism) of those \ANC{} groups $G$ such
  that $G(\E_r)$ is primitive is given by the following.
  \begin{enumerate}
  \item
  \label{thm:cyclo_prim1}
  $\Cyc n$, where $\divides n {\aug r}$.
  \item
  \label{thm:cyclo_prim2}
  $\Quat 8 \times \Cyc m$, where $m$ and $r$ are odd, $\divides
    m {\aug r}$, $rm > 1$, and $\order(2 \bmod {rm})$ is even.
  \item
  \label{thm:cyclo_prim3}
  $\Quat{16} \times \Cyc m$, where $m$ and $r$ are odd,
    $\divides m {\aug r}$, $rm > 1$, and $\order(2 \bmod {rm})$ is
    odd.
  \end{enumerate}
\end{thm}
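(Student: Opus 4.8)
The plan is to instantiate Corollary~\ref{cor:general_prim_cyclic} (when $G$ is abelian) and Theorem~\ref{thm:char} (when $G$ is non-abelian) at $K=\E_r$, and then to rewrite every arithmetic condition occurring there directly in terms of $r$. All the work lies in four computations for the field $\E_r$: the plain invariant $\cmet_{\E_r}(n)$, the condition ``$\cmet^+_{\E_r}(n)\neq 0$'', whether $\E_r$ is totally imaginary, and the local degrees $\idx{(\E_r)_\fp:\Q_2}$ at primes $\fp\mid 2$.

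First I would assemble the cyclotomic book-keeping. Since $\E_r\cap\E_n=\E_{(r,n)}$, the set $\fD_{\E_r}(n)$ of \S\ref{s:relcyclo} is the set of multiples of the conductor of $\E_{(r,n)}$; hence $\cmet_{\E_r}(n)$ equals that conductor, so $\divides{\cmet_{\E_r}(n)}{(r,n)}$, and a valuation-by-valuation comparison (the factor $2$ that may be dropped in passing to the conductor being reinstated by $\aug{}$) yields
\[
\divides n {\aug{\cmet_{\E_r}(n)}}\iff\divides n {\aug r},\qquad
\divides m {\aug{\cmet_{\E_r}(m)}}\iff\divides m {\aug r}.
\]
Next, for $4\divides n$, Lemma~\ref{lem:sqrt_En}(\ref{lem:sqrt_En1}) gives $\cmet^+_{\E_r}(n)\neq 0$ iff $\sqrt{-1}\notin\E_n^+\E_r$; writing $\E_n^+=\E_{2^j}^+\E_m$ and using $\E_{2^j}^+(\sqrt{-1})=\E_{2^j}$, one checks that $\sqrt{-1}\in\E_n^+\E_r$ exactly when $\sqrt{-1}\in\E_r$, i.e.\ (as $r\not\equiv 2\bmod 4$) exactly when $r$ is even; hence $\cmet^+_{\E_r}(n)\neq 0\iff r$ is odd, and for odd $r$ the invariant $\cmet_{\E_r}(n)=(r,n)$ is itself odd. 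Finally, $\E_r$ is totally imaginary iff $r\neq 1$; and for odd $r$ the extension $\E_r/\Q$ is unramified at $2$ with decomposition group $\langle 2\rangle\le(\Z/r)^\times$, so $\idx{(\E_r)_\fp:\Q_2}=\ord(2\bmod r)$ for every $\fp\mid 2$. Using that $\ord(2\bmod N)$ is even (for odd $N$) iff some prime $\divides p N$ has $\ord(2\bmod p)$ even, it follows that $\ord(2\bmod m)\dtimes\idx{(\E_r)_\fp:\Q_2}$ is even iff $\ord(2\bmod rm)$ is even, for every (equivalently, some) $\fp\mid 2$.

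Now I would run through the possibilities for $G$. If $G$ is abelian, then $G\cong\Cyc n$ and Corollary~\ref{cor:general_prim_cyclic}, together with the first displayed equivalence, gives exactly (\ref{thm:cyclo_prim1}). If $G$ is non-abelian, then by Theorem~\ref{thm:anc_class} the group $G_2$ is $\Quat 8$, $\Quat{16}$, $\Quat{2^j}$ with $j\ge 5$, $\Dih{2^j}$ with $j\ge 4$, or $\SDih{2^j}$ with $j\ge 4$; in every case other than $\Quat 8$ one has $8\divides n$. For those latter cases, primitivity of $G(\E_r)$ requires by Theorem~\ref{thm:char} that $\divides n {\aug{\cmet_{\E_r}(n)}}$, which with $8\divides n$ forces $\cmet_{\E_r}(n)$, and hence (since $\divides{\cmet_{\E_r}(n)}{(r,n)}$) also $r$, to be divisible by $4$; but primitivity also requires the cyclic index-$2$ subgroup to be homogeneous, so $\sqrt{-1}\notin\E_r$ by Lemma~\ref{lem:cyc_homg_irred}(\ref{lem:cyc_homg_irred1}), i.e.\ $r$ is odd --- a contradiction, so none of these $G$ occur. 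For $G_2\cong\Quat 8$ (so $n=4m$), substituting the four computations into Theorem~\ref{thm:char}(\ref{thm:char_quat8}) turns its conditions into ``$r$ odd, $\divides m {\aug r}$, $\ord(2\bmod rm)$ even, $rm>1$'', which is (\ref{thm:cyclo_prim2}). For $G_2\cong\Quat{16}$ (so $n=8m$), Theorem~\ref{thm:char}(\ref{thm:char_quat16}) becomes ``$r$ odd, $\divides m {\aug r}$, $rm>1$, and: if $\ord(2\bmod rm)$ is even then $n/\cmet_{\E_r}(n)$ is odd''; but for odd $r$ we have $\cmet_{\E_r}(n)=(r,m)$ odd and $8\divides n$, so $n/\cmet_{\E_r}(n)$ is never odd, and the final clause becomes simply ``$\ord(2\bmod rm)$ is odd'', giving (\ref{thm:cyclo_prim3}). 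It remains only to note that the three families are pairwise non-isomorphic and that the above shows the list to be exhaustive.

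The fiddliest step should be the package of $2$-adic valuation comparisons underlying these translations: proving cleanly that $\divides n {\aug{\cmet_{\E_r}(n)}}\iff\divides n {\aug r}$, that $\cmet^+_{\E_r}(n)\neq 0\iff r$ is odd, and the parity identity ``$\ord(2\bmod rm)$ even $\iff\ord(2\bmod m)\dtimes\ord(2\bmod r)$ even'' for possibly non-coprime odd $r,m$. None is individually hard, but each needs care at the prime $2$ and with the conventions $\E_{2k}=\E_k$ and $\aug{}$.
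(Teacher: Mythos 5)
Your proposal is correct and follows essentially the same route as the paper: compute the invariants $\cmet_{\E_r}^{\phantom+},\cmet_{\E_r}^\pm$ (equivalently, the relevant containments $\E_{(r,n)}\subset\E_d^\circ$), translate the local degrees at $2$ into $\ord(2\bmod r)$, and specialise Corollary~\ref{cor:general_prim_cyclic} and Theorem~\ref{thm:char}, using the parity lemma for $\ord(2\bmod rm)$. The only cosmetic differences are that you exclude the dihedral, semidihedral and large quaternion cases via homogeneity of the cyclic index-$2$ subgroup (Lemma~\ref{lem:cyc_homg_irred}) rather than by directly observing that ``$\cmet^\pm_{\E_r}(n)\neq 0$'' and ``$\divides n{\aug{\cmet_{\E_r}(n)}}$'' are incompatible when $\divides 4 n$, which is equally valid.
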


The following will be used in the proof of Theorem~\ref{thm:cyclo_prim}.
\begin{lemma}[{Cf.~\cite[Thm~3]{FGS71}}]
Let $m\in \N$ be odd.
Then $\ord(2\bmod m)$ is even if and only if $\ord(2\bmod p)$ is even for some
prime $\divides p m$.
\end{lemma}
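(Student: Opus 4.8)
The plan is to reduce the statement about $\ord(2 \bmod m)$ to the $2$-adic valuation of certain multiplicative orders, since parity is precisely the question ``is $2 \mid \ord(2 \bmod m)$?''. Write $m = \prod_i p_i^{a_i}$ for the prime factorization of the odd number $m$. By the Chinese Remainder Theorem, $(\Z/m\Z)^\times \cong \prod_i (\Z/p_i^{a_i}\Z)^\times$, and under this isomorphism the image of $2$ is $(2 \bmod p_i^{a_i})_i$. Hence $\ord(2 \bmod m) = \lcm_i \ord(2 \bmod p_i^{a_i})$. Since the $2$-adic valuation of an lcm is the maximum of the $2$-adic valuations of the terms, $\ord(2 \bmod m)$ is even if and only if $\ord(2 \bmod p_i^{a_i})$ is even for at least one~$i$.

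It therefore suffices to show, for each odd prime $p$ and each $a \ge 1$, that $\ord(2 \bmod p^a)$ is even if and only if $\ord(2 \bmod p)$ is even. For this I would use the standard structure of $(\Z/p^a\Z)^\times$, which is cyclic of order $p^{a-1}(p-1)$ for odd $p$. More concretely, the natural surjection $(\Z/p^a\Z)^\times \twoheadrightarrow (\Z/p\Z)^\times$ has kernel of order $p^{a-1}$, an odd number. The order of $2 \bmod p$ equals $\ord(2 \bmod p^a)$ divided by some divisor of $p^{a-1}$ (namely the index of the subgroup generated by the image of $2$ inside the kernel of the reduction map), so $\ord(2 \bmod p^a) = p^{c} \cdot \ord(2 \bmod p)$ for some $0 \le c \le a-1$. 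Since $p^{c}$ is odd, the two orders have the same parity, completing the reduction. Combining this with the previous paragraph yields the claim: $\ord(2 \bmod m)$ is even iff $\ord(2 \bmod p_i^{a_i})$ is even for some $i$ iff $\ord(2 \bmod p_i)$ is even for some prime $p_i \mid m$.

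I do not expect any genuine obstacle here; the statement is elementary group theory of $(\Z/m\Z)^\times$. The only point requiring a little care is the relation between $\ord(2 \bmod p^a)$ and $\ord(2 \bmod p)$, where one must invoke that the kernel of reduction modulo $p$ is a $p$-group (hence of odd order) rather than, say, appeal to lifting-the-exponent-type formulas, which would be overkill. One can alternatively phrase the whole argument purely in terms of the lcm decomposition $\ord(2 \bmod m) = \lcm_{p^a \| m} \ord(2 \bmod p^a)$ together with the observation that each $\ord(2 \bmod p^a)$ is an odd multiple of $\ord(2 \bmod p)$, which is perhaps the cleanest write-up.
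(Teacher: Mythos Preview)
Your argument is correct. The paper does not prove this lemma at all; it simply states it with a citation to \cite[Thm~3]{FGS71}, so there is no ``paper's own proof'' to compare against. Your proof via the Chinese Remainder decomposition $\ord(2\bmod m)=\lcm_i \ord(2\bmod p_i^{a_i})$ together with the observation that $\ord(2\bmod p^a)/\ord(2\bmod p)$ divides $p^{a-1}$ (hence is odd) is exactly the standard elementary route and is sound.

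One small wording issue: the parenthetical ``namely the index of the subgroup generated by the image of $2$ inside the kernel of the reduction map'' is garbled---the image of $2$ lies in $(\Z/p\Z)^\times$, not in the kernel. What you actually use is that $2^{\ord(2\bmod p)}$ lies in the kernel of $(\Z/p^a\Z)^\times \twoheadrightarrow (\Z/p\Z)^\times$, which has order $p^{a-1}$, so $\ord(2\bmod p^a)$ divides $p^{a-1}\cdot\ord(2\bmod p)$; combined with $\ord(2\bmod p)\mid \ord(2\bmod p^a)$ this gives the claimed $\ord(2\bmod p^a)=p^c\cdot\ord(2\bmod p)$. I would drop the parenthetical or replace it with this cleaner justification.
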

\begin{cor}
  \label{cor:ord2_parity}
  Let $m_1,m_2\in\N$ both be odd.
  Then
  $$\ord(2\bmod{m_1m_2}) \equiv \ord(2\bmod {m_1}) \dtimes \ord(2\bmod {m_2})
  \bmod 2.\qed$$
\end{cor}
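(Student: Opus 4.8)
The plan is to deduce the congruence from the preceding Lemma by a parity argument: since both sides are positive integers, the assertion modulo $2$ says merely that the left-hand side $\ord(2\bmod m_1m_2)$ is even if and only if the right-hand side $\ord(2\bmod m_1)\dtimes\ord(2\bmod m_2)$ is even. So first I would record the trivial fact that a product of two integers is even precisely when at least one factor is; thus the right-hand side is even exactly when $\ord(2\bmod m_1)$ is even or $\ord(2\bmod m_2)$ is even.

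Next I would invoke the Lemma above (the cited variant of \cite[Thm~3]{FGS71}) separately for each of the odd numbers $m_1$, $m_2$, and $m_1m_2$. A prime divides $m_1m_2$ if and only if it divides $m_1$ or divides $m_2$, and all such primes are odd because $m_1,m_2$ are odd, so $\ord(2\bmod p)$ is meaningful for each of them. Hence $\ord(2\bmod m_1m_2)$ is even if and only if some prime $\divides p{m_1m_2}$ has $\ord(2\bmod p)$ even, which holds if and only if some prime divisor of $m_1$ has this property or some prime divisor of $m_2$ does, i.e.\ if and only if $\ord(2\bmod m_1)$ is even or $\ord(2\bmod m_2)$ is even. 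Comparing with the first paragraph, the two sides of the claimed congruence are even in exactly the same cases, hence have the same parity, and the corollary follows.

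There is no real obstacle here; the only mild point is that $m_1$ and $m_2$ need not be coprime, but this is harmless since the Lemma carries no coprimality hypothesis and is applied to $m_1$, $m_2$, and $m_1m_2$ individually. (In the degenerate case $m_i = 1$ the Lemma applies vacuously, in agreement with $\ord(2\bmod 1) = 1$ being odd.)
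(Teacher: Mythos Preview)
Your argument is correct and is exactly the intended deduction from the preceding Lemma; the paper itself gives no proof beyond the \qedsymbol, since the corollary is immediate from that Lemma by precisely the parity considerations you spell out.
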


\begin{proof}[Proof of Theorem~\ref{thm:cyclo_prim}]
  For $n\in \N$, we have $\cmet_{\E_r}(n) = (r,n)$ if $(r,n)
  \equiv 0,1,3\bmod 4$ and $\cmet_{\E_r}(n) = (r,n)/2$ if $(r,n)
  \equiv 2 \bmod 4$;
  in particular, $\aug{\cmet_{\E_r}(n)} = \aug{(r,n)}$.
  Also,
  \[
  \cmet_{\E_r}^\pm(n) = \begin{cases}
    (r,n), & (r,n) \equiv 1,3 \bmod 4, \\
    (r,n)/2, & (r,n) \equiv 2 \bmod 4, \\
    0, & (r,n) \equiv 0 \bmod 4.
  \end{cases}
  \]

  Given $a, b \in \N$, it is easy to see that
  $\divides a {\aug{(a,b)}}$ if and only if
  $\divides a {\aug b}$. 
  The cyclic case (\ref{thm:cyclo_prim1}) now follows from Corollary~\ref{cor:general_prim_cyclic}.
  Since $\divides 4 n$ in Theorem~\ref{thm:char}, $\cmet_{\E_r}^\pm(n) \not=
  0$ (which is equivalent to $\ndivides 4 r$) and $\divides n
  {\aug{\cmet_{\E_r}(n)}}$ (which is equivalent to $\divides n {\aug r}$) cannot
  both be satisfied.
  This rules out primitivity of the groups in
  Theorem~\ref{thm:char}(\ref{thm:char_dih})--(\ref{thm:char_genquat}).
  Let $G_2 \cong \Quat 8$.
  By Theorem~\ref{thm:char}(\ref{thm:char_quat8}) 
  in order for $G(\E_r)$ to be primitive it is necessary that $r$ is odd
  (recall that $r\not\equiv 2\bmod 4$), $\divides m {\aug r}$, and $rm > 1$.
  The degree of the $r$th
  cyclotomic field over $\Q_2$ is $\ord(2\bmod r)$,
  see e.g.~\cite[Prop.\ 3.5.18]{Coh07}.
  Together with Corollary~\ref{cor:ord2_parity},
  this yields the conditions in (\ref{thm:cyclo_prim2}).
  Finally, let $G_2 \cong \Quat{16}$.
  Again, by Theorem~\ref{thm:char}, for $G(K)$ to be primitive, it is necessary that $r$ is odd, $rm>1$,
  and $\divides r {\aug m}$.
  In particular, $\cmet_{\E_r}(n) = (n,r)$ whence $n/\!\cmet_{\E_r}(n)$ is even
  and $\ord(2\bmod {rm})$ has to be odd, leading to the given conditions.
\end{proof}

\begin{rem}
  \label{rem:density}
  It is shown in \cite[Thm\ 5]{FGS71} that the set of odd primes $p$
  such that $\order(2\bmod p)$ is even has Dirichlet density $17/24$.
  In view of Corollary~\ref{cor:ord2_parity},
  even if $\order(2\bmod r)$ is odd, the
  case (\ref{thm:cyclo_prim3}) in Theorem~\ref{thm:cyclo_prim} is thus still rare.
\end{rem}

\subsection{Quadratic fields}

\begin{thm}
  \label{thm:quadratic_prim}
  Let $d \in \Z$ be square-free with $d\not= 1$.
  Let $\mathfrak f$ be the conductor of $\Q(\sqrt d)$ or,
  equivalently, the absolute value of the discriminant of $\Q(\sqrt d)/\Q$.
  Recall the definition of $\aug a$ for $a\in \N$ from the beginning of \S\ref{s:anc_class}.
  A complete list of those \ANC{} groups $G$ (up to isomorphism) such that
  $G(\Q(\sqrt d))$ is primitive is as follows.
  \begin{enumerate}
  \item
    \label{thm:quadratic_prim1}
    $\Cyc n$, where $n\in \N$ is square-free or 
    $\ddivides {\mathfrak f} n {\aug {\mathfrak f}}$.
  \item
    \label{thm:quadratic_prim2}
    $\Dih{16} \times \Cyc{m}$, where $d \equiv 2 \bmod 8$ and $m\in \N$ is odd and
    square-free
    with $\divides d{2m}$.
  \item
    \label{thm:quadratic_prim3}
    $\SDih{16} \times \Cyc{m}$, where $d\equiv 6 \bmod 8$ and $m\in \N$ is
    odd and square-free with $\divides d {2m}$.
  \item
    \label{thm:quadratic_prim4}
    $\Quat 8\times \Cyc m$ for odd and square-free $m\in \N$ subject to the following conditions:
    \begin{itemize}
    \item
      If $d > 0$, then $m > 1$.
    \item
      If $d \equiv 1 \bmod 8$, then $\ord(2\bmod m)$ is even.
    \item
      If $d \equiv 3 \bmod 4$, then $\ndivides d m$.
    \end{itemize}
  \item
    $\Quat {16}\times \Cyc m$ for odd and square-free $m\in \N$ 
    such that $m > 1$ if $d > 0$ and one of the following conditions is satisfied:
    \begin{itemize}
    \item $d \equiv 1 \bmod 8$ and $\ord(2\bmod m)$ is odd.
    \item
      $d \equiv 2 \bmod 8$ and $\divides{d}{2m}$.
    \end{itemize}
  \end{enumerate}
\end{thm}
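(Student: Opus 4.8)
The plan is to characterise the cyclic groups in~(\ref{thm:quadratic_prim1}) via Corollary~\ref{cor:general_prim_cyclic} and the non-abelian ones in (\ref{thm:quadratic_prim2})--(v) via Theorem~\ref{thm:char}, so that everything reduces to computing the invariants $\cmet_K^{\phantom +}$, $\cmet_K^+$, $\cmet_K^-$ of $K = \Q(\sqrt d)$ together with the splitting type of $2$ in $K$. Write $\mathfrak f$ for the conductor of $K$, so that $\mathfrak f = |d|$ if $d\equiv 1\bmod 4$ and $\mathfrak f = 4|d|$ otherwise (equivalently, $\mathfrak f$ is the absolute discriminant). Since $K$ is quadratic, $K\cap\E_n$ equals $K$ if $\divides{\mathfrak f}{n}$ and $\Q$ otherwise; hence $\cmet_K^\circ(n) = 1$ whenever $\ndivides{\mathfrak f}{n}$, and when $\divides{\mathfrak f}{n}$ the value $\cmet_K^\circ(n)$ is the greatest common divisor of $\{ d' : \Q(\sqrt d)\subset\E_{d'}^\circ\}$ (restricted to the appropriate parity when $\circ = -$). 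Using conductor--discriminant together with the three quadratic subfields $\E_8^+ = \Q(\sqrt 2)$, $\E_8^- = \Q(\sqrt{-2})$, $\E_4 = \Q(\sqrt{-1})$ of $\E_{2^a}$ and the lattice of Figure~\ref{fig:subfields_cyclotomic}, I would then compute, for $\divides{\mathfrak f}{n}$: if $d\equiv 1\bmod 4$, then $\cmet_K^{\phantom +}(n) = \cmet_K^+(n) = \cmet_K^-(n) = |d|$; if $d\equiv 3\bmod 4$, then $\cmet_K^{\phantom +}(n) = 4|d|$ but $\cmet_K^+(n) = \cmet_K^-(n) = 0$; if $d = 2d_0\equiv 2\bmod 8$, then $\cmet_K^{\phantom +}(n) = \cmet_K^+(n) = 8|d_0|$ and $\cmet_K^-(n) = 16|d_0|$ (the last via Proposition~\ref{prop:Dpm}(\ref{prop:Dpm5})); and if $d = 2d_0\equiv 6\bmod 8$, then $\cmet_K^{\phantom +}(n) = \cmet_K^-(n) = 8|d_0|$ but $\cmet_K^+(n) = 0$.

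Next I would record the auxiliary data and substitute. The field $K$ is totally imaginary iff $d < 0$, and $2$ splits in $K$ exactly when $d\equiv 1\bmod 8$ (with $\idx{K_{\fp}:\Q_2} = 1$ for the two primes $\divides{\fp}{2}$, and $\idx{K_{\fp}:\Q_2} = 2$ for the unique $\fp$ otherwise); hence ``$\ord(2\bmod m)\dtimes\idx{K_{\fp}:\Q_2}$ is even for all $\divides{\fp}{2}$'' holds automatically unless $d\equiv 1\bmod 8$, in which case it reads ``$\ord(2\bmod m)$ is even'', while the solubility criteria for $x^2+y^2 = -1$ over $\E_m K$ and over $\E_n^+ K$ are supplied verbatim by Lemma~\ref{lem:translate}(\ref{lem:translate3})--(\ref{lem:translate4}). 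Feeding all of this into Corollary~\ref{cor:general_prim_cyclic} and Theorem~\ref{thm:char}, the recurring simplifications are: since $d$ is square-free, whenever $\cmet_K(n)$ or $\cmet_K(m)$ is odd it equals $|d|$ or $1$ and is therefore square-free, so $\aug{\cmet_K(\cdot)}$ is the product of all primes; this collapses ``$\divides{m}{\aug{\cmet_K(m)}}$'' to ``$m$ square-free'' and the cyclic criterion to the form in~(\ref{thm:quadratic_prim1}); moreover $\cmet_K(n)$ is odd or has $2$-part exactly $4$ or $8$, so $\nu_2(\aug{\cmet_K(n)})\le 3$ and hence ``$\divides{n}{\aug{\cmet_K(n)}}$'' fails once $\divides{16}{n}$, which rules out $\Dih{2^{j+1}}$, $\SDih{2^{j+1}}$, and $\Quat{2^{j+1}}$ for $j\ge 4$ over every quadratic field; the ``$K$ totally imaginary or $m > 1$'' clauses become ``$m > 1$ if $d > 0$''; and for the surviving $2$-groups $\Dih{16}$, $\SDih{16}$, $\Quat{16}$ and $\Quat 8$, comparing $\nu_2(\cmet_K(n))$ with $\nu_2(n)$ and checking whether $\cmet_K^\pm(n)$ vanishes pins down exactly the congruences $d\equiv 2$, $6$, $1$, $2\bmod 8$ respectively and the divisibilities $\divides{d}{2m}$, $\ndivides{d}{m}$ appearing in (\ref{thm:quadratic_prim2})--(v).

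I expect the main obstacle to be the computation of $\cmet_K^\pm$ in the first step: because $\E_{d'}^+$ and $\E_{d'}^-$ are not simply the real, respectively a natural imaginary, part of $\E_{d'}$, a quadratic field $\Q(\sqrt d)$ may fail to lie in any $\E_{d'}^\pm$ even though it lies in $\E_{d'}$, and deciding this --- equivalently, deciding whether $\cmet_K^\pm(n) = 0$ --- is the crux. Concretely one writes $\E_{d'} = \E_{2^a}\E_{m'}$, tracks the Dirichlet character of $\Q(\sqrt d)$, and tests whether it is trivial on $\Gal(\E_{2^a}/\E_{2^a}^\pm)$; this is mechanical given Lemma~\ref{lem:Epm_facts} and Figure~\ref{fig:subfields_cyclotomic} but requires care in each residue class of $d$ modulo~$8$. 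Once the invariants are in hand, (\ref{thm:quadratic_prim2})--(v) reduce to a bookkeeping exercise, the most delicate point being that the even-conductor cases ($d$ even, or $d\equiv 3\bmod 4$) produce primitive non-abelian groups only when $\mathfrak f$ barely divides $n$, that is, only for $\Quat 8$, $\Quat{16}$, $\Dih{16}$, and $\SDih{16}$.
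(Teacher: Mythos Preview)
Your proposal is correct and follows essentially the same route as the paper: compute $\cmet_K^{\phantom+}$, $\cmet_K^+$, $\cmet_K^-$ for $K=\Q(\sqrt d)$ case-by-case in $d\bmod 8$ (this is the paper's Lemma~\ref{lem:quadratic_cyclometers}), record the splitting behaviour of $2$ (the paper's Lemma~\ref{lem:split2}), and then feed everything into Corollary~\ref{cor:general_prim_cyclic} and Theorem~\ref{thm:char}. Your key observations---that $\cmet_K^\circ(n)=1$ when $\ndivides{\mathfrak f}{n}$, that $\nu_2(\aug{\cmet_K(n)})\le 3$ forces $j\le 3$ in the dihedral, semidihedral, and large quaternion cases, and that the $\cmet_K^\pm$-vanishing pattern separates the residue classes of $d\bmod 8$---are exactly the simplifications the paper exploits.
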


In preparation of our proof of Theorem~\ref{thm:quadratic_prim},
we first determine the invariants $\cmet_K^\circ$ for these fields.
By Proposition~\ref{prop:cyclo_periodic}, it suffices to evaluate these
functions at divisors of the conductor of the field in question.

\begin{lemma}
  \label{lem:quadratic_cyclometers}
  Let $d\in \Z$ be square-free with $d\not= 1$.
  Let $\mathfrak f \in\N$ be the conductor of $\Q(\sqrt d)$.
  Then:
  \begin{enumerate}
  \item
    \label{lem:quadratic_cyclometers1}
    If $n\in \N$ is a proper divisor of $\mathfrak f$, then $\cmet_{\Q(\sqrt d)}^{\phantom+}(n) =
    \cmet^\pm_{\Q(\sqrt d)}(n) = 1$.
  \item
    \label{lem:quadratic_cyclometers2}
    $\cmet_{\Q(\sqrt d)}(\mathfrak f) = \mathfrak f$.
  \item
    \label{lem:quadratic_cyclometers3}
    $\cmet_{\Q(\sqrt d)}^\pm(\mathfrak f)\in \{0, \mathfrak f, 2\mathfrak f\}$ as indicated in the following table:
    \begin{center}
  \begin{tabular}{c|c|c|c}
    $d \bmod 8$ & $\mathfrak f$ & $\cmet_{\Q(\sqrt d)}^+(\mathfrak f)$ & $\cmet^-_{\Q(\sqrt
      d)}(\mathfrak f)$ \\
    \hline
    $1,5$ &
    $\abs d$ & $\mathfrak f$ & $\mathfrak f$
    \\

    $3,7$ &
    $4\abs d$ &
    $0$ & $0$
    \\

    $2$ &
    $4 \abs d$ &  $\mathfrak f$ & $2 \mathfrak f$
    \\

    $6$ &
    $4\abs d$ & $0$ & $\mathfrak f$
  \end{tabular}
  \end{center}
  \end{enumerate}
\end{lemma}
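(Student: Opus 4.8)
The plan is to reduce everything to deciding, for $\circ\in\{+,-\}$, whether $\Q(\sqrt d)\subseteq\E_{\mathfrak f}^{\circ}$. I would begin with three routine observations: $\Q(\sqrt d)$ is its own maximal abelian subfield; its conductor $\mathfrak f$ equals $\abs d$ if $d\equiv1\bmod4$ and $4\abs d$ otherwise; and, $\Q(\sqrt d)$ being quadratic with conductor $\mathfrak f$, the intersection $\Q(\sqrt d)\cap\E_n$ is $\Q(\sqrt d)$ if $\mathfrak f\mid n$ and $\Q$ if not. Part~(\ref{lem:quadratic_cyclometers1}) is then immediate: for a proper divisor $n$ of $\mathfrak f$ we get $\Q(\sqrt d)\cap\E_n=\Q\subseteq\E_1^{\circ}$, so $1\in\fD_{\Q(\sqrt d)}^{\circ}(n)$ and hence $\cmet_{\Q(\sqrt d)}^{\circ}(n)=1$. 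Part~(\ref{lem:quadratic_cyclometers2}) is just as quick: $\cmet_{\Q(\sqrt d)}(\mathfrak f)\mid\mathfrak f$ and $\cmet_{\Q(\sqrt d)}(\mathfrak f)\in\fD_{\Q(\sqrt d)}(\mathfrak f)$ by Proposition~\ref{prop:Dpm}(\ref{prop:Dpm1}), so $\Q(\sqrt d)=\Q(\sqrt d)\cap\E_{\mathfrak f}\subseteq\E_{\cmet_{\Q(\sqrt d)}(\mathfrak f)}$ and minimality of the conductor forces $\mathfrak f\mid\cmet_{\Q(\sqrt d)}(\mathfrak f)$.

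For part~(\ref{lem:quadratic_cyclometers3}) I would first extract a reduction from Lemma~\ref{lem:Epm_facts}. If $e\in\fD_{\Q(\sqrt d)}^{+}(\mathfrak f)$ then $\mathfrak f\mid e$, so Lemma~\ref{lem:Epm_facts}(\ref{lem:Epm_facts1}) gives $\Q(\sqrt d)\subseteq\E_e^{+}\cap\E_{\mathfrak f}=\E_{(\mathfrak f,e)}^{+}=\E_{\mathfrak f}^{+}$; thus $\fD_{\Q(\sqrt d)}^{+}(\mathfrak f)\neq\emptyset$ exactly when $\Q(\sqrt d)\subseteq\E_{\mathfrak f}^{+}$, in which case $\cmet_{\Q(\sqrt d)}^{+}(\mathfrak f)=\mathfrak f$ by the argument of part~(\ref{lem:quadratic_cyclometers2}), and otherwise it is $0$. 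Likewise, if $e\in\fD_{\Q(\sqrt d)}^{-}(\mathfrak f)$ then $\mathfrak f\mid e$ gives $\nu_2(\mathfrak f)\le\nu_2(e)$, and Lemma~\ref{lem:Epm_facts}(\ref{lem:Epm_facts4}) yields $\Q(\sqrt d)\subseteq\E_{\mathfrak f}^{-}$ when $\nu_2(e)=\nu_2(\mathfrak f)$ and $\Q(\sqrt d)\subseteq\E_{\mathfrak f}^{+}$ when $\nu_2(e)>\nu_2(\mathfrak f)$; so $\fD_{\Q(\sqrt d)}^{-}(\mathfrak f)\neq\emptyset$ forces $\Q(\sqrt d)$ into $\E_{\mathfrak f}^{+}$ or $\E_{\mathfrak f}^{-}$, while conversely $\Q(\sqrt d)\subseteq\E_{\mathfrak f}^{-}$ puts $\mathfrak f$ into $\fD_{\Q(\sqrt d)}^{-}(\mathfrak f)$. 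Feeding these facts into Proposition~\ref{prop:Dpm}(\ref{prop:Dpm4})--(\ref{prop:Dpm5}) I can then read off $\cmet_{\Q(\sqrt d)}^{-}(\mathfrak f)$: if $\Q(\sqrt d)\subseteq\E_{\mathfrak f}^{+}$ then $\fD_{\Q(\sqrt d)}^{+}(\mathfrak f)\neq\emptyset$ and~(\ref{prop:Dpm5}) applied to $\cmet_{\Q(\sqrt d)}^{+}(\mathfrak f)=\mathfrak f$ gives $\cmet_{\Q(\sqrt d)}^{-}(\mathfrak f)=2\mathfrak f$ for $\mathfrak f$ even and $=\mathfrak f$ for $\mathfrak f$ odd; if $\Q(\sqrt d)\subseteq\E_{\mathfrak f}^{-}$ but $\Q(\sqrt d)\not\subseteq\E_{\mathfrak f}^{+}$, then by~(\ref{prop:Dpm4}) the number $\cmet_{\Q(\sqrt d)}^{-}(\mathfrak f)$ is the greatest common divisor of the divisors of $\mathfrak f$ lying in $\fD_{\Q(\sqrt d)}^{-}(\mathfrak f)$, of which $\mathfrak f$ is the only one, so it equals $\mathfrak f$; and if $\Q(\sqrt d)$ lies in neither $\E_{\mathfrak f}^{\pm}$, it is $0$.

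What remains is to decide, case by case in $d\bmod8$, membership of $\Q(\sqrt d)$ in $\E_{\mathfrak f}^{\pm}$. For $d\equiv1,5\bmod8$ the conductor $\mathfrak f=\abs d$ is odd, so $\E_{\mathfrak f}^{+}=\E_{\mathfrak f}^{-}=\E_{\mathfrak f}\supseteq\Q(\sqrt d)$ and both memberships hold (first row). For $d\equiv3,7\bmod8$ we have $\mathfrak f=4\abs d$ with $\nu_2(\mathfrak f)=2$, so $\E_{\mathfrak f}^{\pm}=\E_{\abs d}$, which cannot contain $\Q(\sqrt d)$ as $\mathfrak f\nmid\abs d$ (second row). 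For $d\equiv2,6\bmod8$, write $d=2d'$ with $d'$ odd; then $\mathfrak f=8\abs{d'}$ and $\E_{\mathfrak f}^{\pm}=\Q(\zeta_8^{\phantom1}\pm\zeta_8^{-1})\E_{\abs{d'}}=\Q(\sqrt{\pm2})\E_{\abs{d'}}$. If $d\equiv2\bmod8$ then $d'\equiv1\bmod4$, whence $\sqrt{d'}\in\E_{\abs{d'}}$ and $\sqrt d=\sqrt2\cdot\sqrt{d'}\in\E_{\mathfrak f}^{+}$; if $d\equiv6\bmod8$ then $d'\equiv3\bmod4$, so $-d'\equiv1\bmod4$, $\sqrt{-d'}\in\E_{\abs{d'}}$ and $\sqrt d=\sqrt{-2}\cdot\sqrt{-d'}\in\E_{\mathfrak f}^{-}$. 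In either case $\Q(\sqrt d)$ cannot also lie in the other of $\E_{\mathfrak f}^{\pm}$: by Lemma~\ref{lem:Epm_facts}(\ref{lem:Epm_facts2}) we have $\E_{\mathfrak f}^{+}\cap\E_{\mathfrak f}^{-}=\E_{\mathfrak f/2}^{+}\subseteq\E_{\mathfrak f/2}$, and $\mathfrak f\nmid\mathfrak f/2$. Combined with the previous paragraph this produces exactly the third and fourth rows of the table. I do not anticipate a serious difficulty; the only real care is the bookkeeping in the second paragraph, matching each geometric possibility for $\Q(\sqrt d)$ against the correct clause of Proposition~\ref{prop:Dpm}, and the field-theoretic input is light, amounting to $\zeta_8^{\phantom1}\pm\zeta_8^{-1}=\sqrt{\pm2}$ together with the fact that $\Q(\sqrt{d'})\subseteq\E_{\abs{d'}}$ precisely when $d'\equiv1\bmod4$.
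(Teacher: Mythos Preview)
Your proof is correct. The overall strategy matches the paper's: reduce to the discriminant/conductor computation, then analyse the residue of $d$ modulo $8$ using the identification $\zeta_8^{\phantom1}\pm\zeta_8^{-1}=\sqrt{\pm2}$ and the fact that $\sqrt{d'}\in\E_{\abs{d'}}$ precisely when $d'\equiv1\bmod4$.

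The organisational difference is that you first extract a clean reduction---$\cmet_K^{\pm}(\mathfrak f)$ is determined entirely by which of $\E_{\mathfrak f}^{+}$, $\E_{\mathfrak f}^{-}$ contain $\Q(\sqrt d)$---and then read off the values via Proposition~\ref{prop:Dpm}(\ref{prop:Dpm4})--(\ref{prop:Dpm5}), whereas the paper handles each residue class with a self-contained direct argument (for instance, in the case $d\equiv2\bmod8$ it verifies $\cmet_K^{-}(\mathfrak f)=2\mathfrak f$ by chasing an arbitrary $r$ with $K\subset\E_r^{-}$ through Lemma~\ref{lem:Epm_facts} rather than invoking Proposition~\ref{prop:Dpm}(\ref{prop:Dpm5})). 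Your route is tidier and makes better reuse of the structural results already established; the paper's route is more hands-on but avoids the bookkeeping of matching cases against clauses of Proposition~\ref{prop:Dpm}. Both rest on the same field-theoretic input.
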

\begin{proof}
  Let $K = \Q(\sqrt d)$.
  Let $D$ be the discriminant of $K$.
  It is well-known~\cite[Prop.~3.4.1]{Coh07}
  that $D = d$ if $d \equiv 1 \bmod
  4$ and $D = 4d$ otherwise. 
  Moreover, $\mathfrak f = \abs D$, see~\cite[Cor.~VI.1.3]{Jan96}.
  Parts~(\ref{lem:quadratic_cyclometers1})--(\ref{lem:quadratic_cyclometers2})
  follow since $K\subset \E_n$ if and only if
  $\divides {\mathfrak f} n$; otherwise, $K \cap \E_n = \Q$.

  Let $d \equiv 1 \bmod 4$.
  Then $\mathfrak f = \abs d$ and $K \subset \E_{\mathfrak f}^{\phantom +} = \E_{\mathfrak f}^\pm$.
  For $r\in \N$, if $K \subset \E_r^\pm$, then $K \subset \E_r$ and thus
  $\divides {\mathfrak f} r$.
  We conclude that $\cmet_K^{\pm}(\mathfrak f) = \mathfrak f$.

  Let $d \equiv 3 \bmod 4$.
  Suppose that $K \subset \E_r^\pm$ for~$r\in \N$.
  Then  $K \subset \E_r^\pm \cap \E_{\mathfrak f}^{\phantom +} = \E_{(r,
    d)}^{\phantom+}$ which contradicts the fact that $\mathfrak f = 4\abs d$ is
  minimal subject to 
  $K\subset \E_{\mathfrak f}$. Hence, $\cmet_K^{\pm}(\mathfrak f) = 0$.

  Let $d = 2a$ for $a \equiv 1 \bmod 4$.
  As $\E_8^+ = \Q(\sqrt 2)$ and $\sqrt a \in \E_{\abs a}^{\phantom +}$, we
  have $K \subset \E_{\mathfrak f}^+ \subset \E_{2\mathfrak f}^-$.
  If $r\in \N$ with $K\subset \E_r^+$, then $K\subset \E_r^+ \cap
  \E_{\mathfrak f}^+ \subset \E_{(r,\mathfrak f)}^{\phantom +}$ whence
  $\divides {\mathfrak f} r$.
  Thus, $\cmet_K^+(\mathfrak f) = \mathfrak f$.
  Next, if $K \subset \E_r^-$ for $r\in \N$, then 
  $\nu_2(r) > \nu_2(\mathfrak f)$ and $K \subset \E_{\mathfrak f}^+ \cap \E_r^-
  = \E_{(r,\mathfrak f)}^-$ for otherwise $K \subset \E_{(r,\mathfrak f)/2}^+$
  (see~Lemma~\ref{lem:Epm_facts}), contradicting $\cmet_K^+(\mathfrak f) =
  \mathfrak f$.
  Since $K \subset \E_{(r,\mathfrak f)}^- \subset \E_{(r,\mathfrak
    f)}^{\phantom+}$, 
  we conclude that $\divides {\mathfrak f} r$ and thus even $\divides {2\mathfrak f}
  r$.
  It thus follows that $\cmet_K^-(\mathfrak f) = 2\mathfrak f$.

  Finally, let $d = 2a$ and $a \equiv 3 \bmod 4$.
  Then $\pm \sqrt d = \sqrt{-2}\sqrt{-a} \in \E_8^-\E_{\abs a}^{\phantom+}
  =\E_{\mathfrak f}^-$.
  If $r\in \N$ with $K\subset \E_r^-$, then 
  $\nu_2(r) = \nu_2(\mathfrak f)$ 
  and $K \subset \E_r^-\cap\E_{\mathfrak f}^- =
  \E_{(r,\mathfrak f)}^-$ 
  since all other cases in Lemma~\ref{lem:Epm_facts}(\ref{lem:Epm_facts3})
  would contradict the minimality of $\mathfrak f$.
  Hence, $\divides {\mathfrak f} r$ and we conclude that $\cmet_K^-(\mathfrak f)
  = \mathfrak f$.
  Suppose that $r\in \N$ with $K \subset \E_r^+$.
  Then $K \subset \E_r^+\cap \E_{\mathfrak f}^- \subset \E_{\mathfrak f}^+$
  and thus $K \subset \E_{\mathfrak f}^+ \cap \E_{\mathfrak f}^- = \E_{\abs a}$
  which contradicts the minimality of $\mathfrak f$. Therefore,
  $\cmet_K^+(\mathfrak f) = 0$.
\end{proof}

The local degrees related to quaternion groups in Theorem~\ref{thm:char} are
easily determined.

\begin{lemma}[{Cf.~\cite[Thm~7]{FGS71}}]
  \label{lem:split2}
  Let $d\in \Z$ be square-free with $d\not= 1$.
  Let $\fp$ be a prime of $K = \Q(\sqrt d)$ lying above $2$.
  Then $K_{\fp} = \Q_2$ if and only if $d\equiv 1\bmod 8$.
\end{lemma}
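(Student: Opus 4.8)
The plan is to pass to $2$-adic completions and reduce the assertion to a statement about squares in $\Q_2$. Since $K = \Q(\sqrt d)$, for any prime $\fp$ of $K$ lying above $2$ the completion $K_{\fp}$ is generated over $\Q_2$ by the image of $\sqrt d$; equivalently, $K \otimes_{\Q} \Q_2 \cong \Q_2[X]/(X^2-d)$, which is the field $\Q_2(\sqrt d)$ if $d \notin (\Q_2^{\times})^2$ and is $\Q_2 \times \Q_2$ otherwise. In either case $K_{\fp} \cong \Q_2(\sqrt d)$, so $K_{\fp} = \Q_2$ if and only if $d$ is a square in $\Q_2^{\times}$. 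This also disposes of the subtlety that the claimed equivalence is asserted for \emph{every} $\fp \mid 2$: when $d$ is a $2$-adic square the prime $2$ splits and both completions equal $\Q_2$, and otherwise there is a single prime above $2$ whose completion is a (ramified or unramified) quadratic extension of $\Q_2$.

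First I would recall the description of squares in $\Q_2^{\times}$: writing $\Q_2^{\times} = 2^{\Z} \times \Z_2^{\times}$ and using that the squares of $\Z_2^{\times}$ form the index-$4$ subgroup $1 + 8\Z_2$, an element $2^a u$ with $a \in \Z$ and $u \in \Z_2^{\times}$ is a square exactly when $a$ is even and $u \equiv 1 \bmod 8$. Now apply this to $d$. Since $d$ is square-free, $\nu_2(d) \in \{0,1\}$. If $\nu_2(d) = 1$, then $d$ has odd valuation, hence is not a $2$-adic square, and moreover $d \not\equiv 1 \bmod 8$ because $d$ is even; so both sides of the equivalence fail. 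If $\nu_2(d) = 0$, then $d \in \Z_2^{\times}$, and $d$ is a square in $\Q_2^{\times}$ precisely when $d \equiv 1 \bmod 8$. Combining the two cases gives $K_{\fp} = \Q_2$ if and only if $d \equiv 1 \bmod 8$.

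There is no genuine obstacle here. The only steps warranting a word of justification are the identification $K_{\fp} \cong \Q_2(\sqrt d)$, which is immediate from $K = \Q(\sqrt d)$ together with the behaviour of completions (or from the tensor-product computation above), and the description of $(\Q_2^{\times})^2$, which is classical $2$-adic arithmetic; in a published account one may simply cite \cite[Thm~7]{FGS71} for these facts.
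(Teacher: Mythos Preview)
Your proof is correct. The paper's argument is organised a little differently: it first observes that if $d \not\equiv 1 \bmod 4$ then the discriminant of $K$ is even, so $2$ ramifies and $K_{\fp} \neq \Q_2$; then, for $d \equiv 1 \bmod 4$, it quotes the standard splitting criterion (from Cohen) that $2$ splits in $\Q(\sqrt d)$ if and only if $d \equiv 1 \bmod 8$. Your version unpacks the same content more directly by identifying $K_{\fp}$ with $\Q_2(\sqrt d)$ and checking when $d \in (\Q_2^{\times})^2$; this is more self-contained and treats the cases $\nu_2(d)=0$ and $\nu_2(d)=1$ uniformly rather than going through the discriminant. The two approaches are essentially equivalent---the splitting criterion cited in the paper is proved precisely by your local computation---so the difference is one of packaging rather than substance.
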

\begin{proof}
  If $d\not\equiv 1\bmod 4$, then $K$ has even discriminant whence $2$ ramifies.
  If, on the other hand, $d\equiv 1\bmod 4$, then $2$ splits if and only if
  $d\equiv 1\bmod 8$, see e.g.~\cite[Prop.~3.4.3]{Coh07}.
\end{proof}

\begin{proof}[Proof of Theorem~\ref{thm:quadratic_prim}]
  For $n\in \N$, 
  Lemma~\ref{lem:quadratic_cyclometers}(\ref{lem:quadratic_cyclometers1}) implies
  that $\divides n {\aug{\cmet_K(n)}}$ if and only if $n$ is square-free or
  $\divides {\mathfrak f} {\divides n {\aug {\mathfrak f}}}$
  whence (\ref{thm:quadratic_prim1}) follows from Corollary~\ref{cor:general_prim_cyclic}.
  Let $G$ be a non-abelian \ANC{} group of order $2n$, where $n =
  2^jm$ for odd $m$ and $j \ge 2$.
  Write $d = 2^{\varepsilon} a$ for odd $a\in \Z$ and $\varepsilon \in
  \{0,1\}$.
  Let $K = \Q(\sqrt d)$.
  We freely use Theorem~\ref{thm:char}.

  Since $n$ is not square-free (indeed, $\divides 4 n$), 
  the condition $\divides n{\aug{\cmet_K(n)}}$ is equivalent to
  $\ddivides {\mathfrak f} n {\aug{\mathfrak f}}$.
  A necessary condition for that is $\divides 4{\mathfrak
    f}$ or, equivalently, $d\not\equiv 1\bmod 4$. 
  Next, if $\divides {\mathfrak f} n$, 
  then $\cmet_K^+(n)\not= 0$ is equivalent to $d\equiv 1,2,5\bmod 8$.
  We conclude that both $\divides n {\aug{\cmet_K(n)}}$ and also $\cmet_K^+(n) \not=
  0$ if and only if $\ddivides {\mathfrak f} n {\aug{\mathfrak f}}$ and $d\equiv 2 \bmod 8$.
  In that case, $\mathfrak f = 8\abs{a}$ whence $\nu_2(n) = 3$ is necessary.
  This proves (\ref{thm:quadratic_prim2}) and also shows that $G(K)$ is never
  primitive if $G_2$ is generalised quaternion with $\card{G_2} > 16$.

  Suppose that $G_2$ is semidihedral.
  We can assume that $\ddivides {\mathfrak f} n {\aug{\mathfrak f}}$
  and rule out the case $d \equiv 1 \bmod 4$ as above.
  If $d \equiv 2 \bmod 8$, then, analogously to the dihedral case, $G_2 \cong
  \SDih{16}$ is necessary for $G(K)$ to be primitive.
  However, in that case $\cmet_K^-(n) = 2\mathfrak f = 8d$ cannot divide $n = 8m$.
  This leaves the case $d \equiv 6 \bmod 8$ and the conditions stated
  in~(\ref{thm:quadratic_prim2}).

  In order to deal with the remaining cases $G_2 \cong \Quat 8$ and $G_2 \cong \Quat{16}$,
  first note that for odd $m\in \N$, the condition $\divides{m}{\aug{\cmet_K(m)}}$ is
  equivalent to $m$ being square-free.
  Indeed, if $d\equiv 1 \bmod 4$, then $\mathfrak f = \abs d$ is itself
  square-free whence $\aug{\cmet_K(n)} = \aug 1$ for all $n\in \N$.
  If, on the other hand, $d \not\equiv 1 \bmod 4$, then $\divides 4 {\mathfrak f}$
  and $\cmet_K(m) = 1$ for odd $m\in \N$.

  Now let $G_2 \cong \Quat 8$.
  Then $G(K)$ is primitive if and only if $m$ is square-free, the
  conditions in the first two bullet points are satisfied (for the second one,
  use Lemma~\ref{lem:split2}), and $\cmet_K^+(4m)\not= 0$.
  By Lemma~\ref{lem:quadratic_cyclometers}(\ref{lem:quadratic_cyclometers3}),
  the latter condition is certainly satisfied whenever $d \equiv 1 \bmod 4$ or
  $d \equiv 2 \bmod 8$.
  If $d \equiv 3 \bmod 4$, then $\cmet_K^+(n) = 0$ if and only if $\divides d m$ 
  which gives the third bullet point.
  If $d \equiv 6 \bmod 8$, then $\cmet_K^+(n) = 1$ since $\mathfrak f =
  \ndivides {8 \abs a} {4m}=n$.

  Finally, let $G_2 \cong \Quat{16}$.
  As in the preceding case, we may assume that $m$ is square-free and that $m >
  1$ if $d > 0$.
  By the second paragraph of this proof and
  Lemma~\ref{lem:translate}(\ref{lem:translate1})--(\ref{lem:translate2}),
  if $\ord(2\bmod m)$ or the local degrees $\idx{K_{\fp}:\Q_2}$ in
  Theorem~\ref{thm:char}(\ref{thm:char_quat16}) are even, then $G(K)$ is primitive
  if and only if $\ddivides {\mathfrak f} n {\aug{\mathfrak f}}$ and $d \equiv 2
  \bmod 8$;
  by Lemma~\ref{lem:split2}, the aforementioned local degrees are
  necessarily even for $d \equiv 2 \bmod 8$.
  Since $n = 8m$, if $d \equiv 2 \bmod 8$, the second bullet point thus
  characterises primitivity of $G(K)$.
  Finally, it remains to consider the situation that $\ord(2\bmod m)$ and
  the local degrees from above are all odd, in which case no further conditions
  need to be imposed.
  This case happens precisely when $d \equiv 1 \bmod 8$ and $\ord(2\bmod m)$ is
  odd and thus leads to the first bullet point.
\end{proof}

{
  \bibliographystyle{abbrv}
  \footnotesize
  \bibliography{primnil2}
}

\end{document}